\newtheorem{theorem}{Theorem}
\newtheorem{proposition}{Proposition}
\newtheorem{lemma}{Lemma}
\newtheorem{remark}{Remark}
\numberwithin{equation}{section}
\numberwithin{theorem}{section}
\numberwithin{lemma}{section}
\numberwithin{proposition}{section}
\numberwithin{remark}{section}
\newenvironment{proof}[1][Proof]{\noindent\textbf{#1.} }{\ \rule{0.5em}{0.5em}}
\begin{document}

\title{Dynamics of ellipses inscribed in triangles}
\author{Alan Horwitz \\
%EndAName
Professor Emeritus of Mathematics\\
Penn State Brandywine\\
25 Yearsley Mill Rd.\\
Media, PA 19063\\
alh4@psu.edu}
\date{6/19/15}
\maketitle

\begin{abstract}
Suppose that we are given two distinct points, $P_{1}$ and $P_{2}$, in the
interior of a triangle, $T$. Is there always an ellipse inscribed in $T$
which also passes through $P_{1}$ and $P_{2}$ ? If yes, how many such
ellipses ? We answer those questions in this paper. It turns out that,
except for $P_{1}$ and $P_{2}$ on a union of three line segments, there are
four such ellipses which pass through $P_{1}$ and $P_{2}$. We also answer a
similar question if instead $P_{1}$ and $P_{2}$ lie on the boundary of $T$.
Finally, an interesting related question, is the following: Given a point, $%
P $, in the interior of a triangle, $T$, and a real number, $r$, is there
always an ellipse inscribed in $T$ which passes through $P$ and has slope $r$
at $P$ ? Again, if yes, how many such ellipses ? The answer is somewhat
different than for the two point case without specifying a slope. There are
cases where no such ellipse exists.
\end{abstract}

\bigskip {\LARGE Introduction}

It is not hard to see that if we are given one point, $P$, in the interior
of a triangle, $T$, then there are infinitely many ellipses inscribed in $T$
and which also pass through $P$. By inscribed in $T$ we mean that the
ellipse lies in $T$ and is tangent to each side of $T$. Also, if we are
given three points in the interior of $T$, then it also follows easily that
there might not be any ellipse inscribed in $T$ and which also passes
through all three of those points. So we decided to look at the average for
the number of points: That is, suppose that we are given two distinct
points, $P_{1}$ and $P_{2}$, in the interior of $T$. Is there always an
ellipse inscribed in $T$ which also passes through $P_{1}$ and $P_{2}$ ? If
yes, how many such ellipses ? We answer those questions below(see Theorem %
\ref{TwoPoints}). It turns out that, except for $P_{1}$ and $P_{2}$ on a
union of three line segments, there are four such ellipses which pass
through $P_{1}$ and $P_{2}$. We encourage the reader to try to determine
what those line segments are and how many ellipses pass through $P_{1}$ and $%
P_{2}$ if they lie on one of those segments. We also answer a similar
question if instead $P_{1}$ and $P_{2}$ lie on the boundary of $T$(see
Theorem \ref{T3}).

Finally, an interesting related question, is the following: Given a point, $%
P $, in the interior of a triangle, $T$, and a real number, $r$, is there
always an ellipse inscribed in $T$ which passes through $P$ and has slope $r$
at $P$ ? Again, if yes, how many such ellipses ? The answer here(see Theorem %
\ref{OnePoint}) is somewhat different than for the two point case without
specifying a slope. There are cases where no such ellipse exists. Again, we
encourage the reader to try to determine what those cases are.

The reason that we used the word dynamics in the title of this article is
the following: Imagine a particle constrained to travel along the path of an
ellipse inscribed in a triangle, $T$. Thus the particle bounces off each
side of $T$\ along its path. Of course there are infinitely many such paths.
Can we also specify two points in $T$, or a point in $T$ and a slope at that
point,\ which the particle must pass through ? If yes, is such a path then
unique ?

At the end we provide some algorithms for finding the ellipses described
above, when they exist. A key to our methods is the general equation of an
ellipse inscribed in the unit triangle(see Proposition \ref{P1}).

\section{Two Points--Interior}

\begin{theorem}
\label{TwoPoints}Let $P_{1}=(x_{1},y_{1})$ and $P_{2}=(x_{2},y_{2})$ be
distinct points which lie in $\limfunc{int}\left( T\right) =$ interior of
the triangle, $T$, with vertices $A,B,C$.

(i) Suppose that $P_{1}$ and $P_{2}$ do \textbf{not }lie on the same line
thru any of the vertices of $T$. Then there are precisely four distinct
ellipses inscribed in $T$ which pass through $P_{1}$ and $P_{2}$.

(ii) Suppose that\ $P_{1}$ and $P_{2}$ do lie on the same line thru one of
the vertices of $T$. Then there are precisely two distinct ellipses
inscribed in $T$ which pass through $P_{1}$ and $P_{2}$.
\end{theorem}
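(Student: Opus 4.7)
My plan is first to apply an affine transformation sending $T$ to the unit triangle $T_{0}$ with vertices $(0,0)$, $(1,0)$, $(0,1)$. Such a map bijectively carries ellipses to ellipses, tangencies to tangencies, the interior of $T$ to the interior of $T_{0}$, and lines through vertices of $T$ to lines through vertices of $T_{0}$, so both the hypotheses and the conclusions of the theorem are invariant under it. It therefore suffices to prove (i) and (ii) assuming $P_{1},P_{2}$ lie in the interior of $T_{0}$.

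By Proposition \ref{P1}, the ellipses inscribed in $T_{0}$ form an explicit two-parameter family, written as $E(x,y;s,t)=0$ with $(s,t)$ ranging over an admissible open region $\Omega $ in the parameter plane. The conditions that the ellipse pass through $P_{1}$ and $P_{2}$ translate into the two polynomial equations
\[
E(x_{1},y_{1};s,t)=0,\qquad E(x_{2},y_{2};s,t)=0,
\]
and inscribed ellipses through $P_{1},P_{2}$ correspond exactly to common solutions lying in $\Omega $. I would then eliminate one parameter via the Sylvester resultant, obtaining a univariate polynomial $R(t)$ whose admissible real roots are in bijection with the inscribed ellipses to be counted.

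The core step is to show that $R(t)$ has degree $4$ and that, under the non-collinearity hypothesis of (i), all four roots are real and admissible. The degree $4$ is consistent with the classical enumerative count of conics tangent to $3$ fixed lines and passing through $2$ fixed points. I expect to factor $R$ as a product of two quadratics and to express the two resulting discriminants as explicit polynomial expressions in the coordinates $(x_{i},y_{i})$ whose positivity follows from the interior-point hypothesis. The hard part will be precisely this reality and admissibility check: one must translate the open condition $P_{1},P_{2}\in \limfunc{int}(T_{0})$ into an algebraic sign condition guaranteeing that each root of $R$ in $\Omega $ corresponds to a genuine ellipse inscribed in $T_{0}$, rather than to a hyperbola or to a degenerate conic.

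For part (ii), if $P_{1},P_{2}$ lie on a line through a vertex $V$ of $T_{0}$, the special geometry of this vertex line should produce an algebraic identity forcing one of the two quadratic factors of $R(t)$ to lose both of its admissible real roots (they become complex, coincide, or correspond to degenerate conics on the boundary of $\Omega $), while the other quadratic factor retains its two real admissible roots. The key technical verification of (ii) will be checking that the count drops by exactly $2$, rather than by $1$, $3$, or $4$; this will again rely on a careful sign analysis using the interior-point hypothesis together with the vertex-collinearity hypothesis, and will be the main obstacle in this part.
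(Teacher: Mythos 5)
Your overall strategy coincides with the paper's: reduce to the unit triangle by affine invariance, use the two-parameter family of Proposition \ref{P1}, eliminate one parameter to obtain a quartic in $t$ that factors as a product of two quadratics ($R$ and $S$ in the paper, equations (\ref{R}) and (\ref{S})), and then do a sign/discriminant analysis to locate the roots in $(0,1)$. One remark: your worry about roots corresponding to hyperbolas or degenerate conics is moot here, since Proposition \ref{P1} already guarantees that every parameter pair in the open square yields a genuine inscribed ellipse; the real admissibility issue is only whether the recovered pair $(w,t)$ lands in $(0,1)\times(0,1)$, which the paper settles in Lemma \ref{L5}.

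The genuine gap is your assumption that the admissible real roots of the eliminant are \emph{in bijection} with the inscribed ellipses. This fails on a nontrivial locus inside the hypotheses of part (i): when $J=x_{2}(1-x_{2}-y_{2})y_{1}^{2}-x_{1}(1-x_{1}-y_{1})y_{2}^{2}=0$ (which can happen even though $P_{1},P_{2}$ are not collinear with any vertex), the quadratic $R$ acquires a double root at $t_{0}$ given by (\ref{t0}), so the quartic has only three distinct roots in $(0,1)$ --- yet the theorem still asserts four ellipses. The resolution, which your plan does not anticipate, is that at $t=t_{0}$ the two defining equations in $w$ become proportional (Lemma \ref{L11}), so this single value of $t$ carries \emph{two} distinct admissible values of $w$ (Lemma \ref{L8}); a substantial portion of the paper's machinery (Lemmas \ref{L13}, \ref{L17}, \ref{L8}, \ref{L11} and Proposition \ref{P4}(iii)) exists precisely to detect and handle this case. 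Without it, your count in (i) would come out wrong (or you would wrongly conclude the answer depends on $J$). Relatedly, your proposed mechanism for part (ii) --- one quadratic factor loses both admissible roots while the other keeps two --- is correct only for one of the three vertex lines (collinearity with the vertex opposite the hypotenuse, where $R$ becomes a negative perfect square whose double root forces $w=1$); for the other two vertices the actual mechanism is that $R$ and $S$ \emph{each} lose exactly one root, at the endpoint $t=0$ or $t=1$ of the parameter interval. Your sign analysis would need to distinguish these cases rather than treat them uniformly.
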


By affine invariance, it suffices to prove Theorem \ref{TwoPoints} for the 
\textit{unit} triangle, $T$--the triangle with vertices $(0,0),(1,0)$, and $%
(0,1)$. If $P_{1}=(x_{1},y_{1})$ and $P_{2}=(x_{2},y_{2})$ are distinct
points which lie in $\limfunc{int}\left( T\right) =$ \textit{interior} of
the triangle, $T$, then it follows that

\begin{eqnarray}
0 &<&x_{1}\neq x_{2}<1  \notag \\
0 &<&y_{1}\neq y_{2}<1  \label{1} \\
x_{1}+y_{1} &<&1,x_{2}+y_{2}<1\text{.}  \notag
\end{eqnarray}

The theorem above then takes the following form:

\begin{theorem}
\label{T1}Let $P_{1}=(x_{1},y_{1})$ and $P_{2}=(x_{2},y_{2})$ be distinct
points which lie in the interior of the unit triangle.

(i) Suppose that $x_{2}y_{1}-x_{1}y_{2}\neq
0,(1-x_{2})y_{1}-(1-x_{1})y_{2}\neq 0$, and $x_{2}(1-y_{1})-x_{1}(1-y_{2})%
\neq 0$. Then there are precisely four distinct ellipses inscribed in $T$
which pass through $P_{1}$ and $P_{2}$.

(ii) Suppose that $y_{1}x_{2}-x_{1}y_{2}=0$,\textbf{\ }$%
(1-x_{2})y_{1}-(1-x_{1})y_{2}=0$, or $x_{2}(1-y_{1})-x_{1}(1-y_{2})=0$. Then
there are precisely two distinct ellipses inscribed in $T$ which pass
through $P_{1}$ and $P_{2}$.
\end{theorem}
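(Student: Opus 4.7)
The plan is to exploit the general two-parameter equation of an ellipse inscribed in the unit triangle provided by Proposition \ref{P1}. Writing that family as $E_{s,t}(x,y)=0$ with $(s,t)$ ranging over an admissible open set, the conditions ``passes through $P_1$'' and ``passes through $P_2$'' translate to a polynomial system
\begin{equation*}
F_1(s,t) := E_{s,t}(x_1,y_1) = 0, \qquad F_2(s,t) := E_{s,t}(x_2,y_2) = 0,
\end{equation*}
whose admissible real common solutions are in bijection with the inscribed ellipses through $P_1$ and $P_2$. Each $F_i$ is quadratic in $(s,t)$, so B\'ezout gives a generic upper bound of four complex solutions; the theorem asserts that this bound is attained in case (i), all roots being real and admissible, and that it drops to two in case (ii).

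My approach would be to eliminate one parameter, say $t$, from $\{F_1 = 0, F_2 = 0\}$ via a resultant, after discarding any spurious factors forced by the shared inscribed-conic structure. This should produce a univariate polynomial $R(s)$ of generic degree four. The central algebraic claim is that, up to a nonzero constant, the leading coefficient of $R$ factors as
\begin{equation*}
(y_1x_2-x_1y_2)\,\bigl((1-x_2)y_1-(1-x_1)y_2\bigr)\,\bigl(x_2(1-y_1)-x_1(1-y_2)\bigr),
\end{equation*}
so the three exceptional loci in (ii) are precisely where $R$ drops in degree. Geometrically, each factor vanishes exactly when the line $P_1P_2$ passes through one of the three vertices of $T$, which identifies case (ii) as the cevian configurations; establishing this explicit factorization is a direct, if tedious, resultant computation.

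For part (i), once the generic degree of $R$ is four, the main obstacle is to verify that all four roots are real and lie in the admissible parameter range of Proposition \ref{P1}. I would handle this in two steps: first, pick a convenient base configuration well inside the admissible region (for example a pair $(P_1,P_2)$ symmetric about the line $y=x$) and verify four real admissible roots by direct calculation; second, use the fact that the real-root count of $R$ can change only across the discriminant locus, which by the factorization above is cut out precisely by the three cevian lines. Continuity then propagates the count of four throughout the complement of those cevians inside $\limfunc{int}(T)\times \limfunc{int}(T)\setminus \Delta$. Checking that each root furnishes a genuine ellipse, rather than a parabola, hyperbola, or a conic with a tangency point outside a side of $T$, is handled by the admissibility conditions built into Proposition \ref{P1}.

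For part (ii), the leading coefficient vanishes and $R$ reduces to a polynomial of degree at most two, which I would show is exactly quadratic with two distinct admissible real roots. A clean way to obtain the count of two is to exploit the projective involution of $T$ that fixes the vertex $V$ on the line $P_1P_2$ and swaps $P_1 \leftrightarrow P_2$: this involution permutes the set of inscribed ellipses through $\{P_1,P_2\}$, so in case (i) the four ellipses form two orbits of size two, and in case (ii) these orbits each collapse to a single fixed ellipse. Supplementing this symmetry heuristic with a direct sign analysis of the reduced quadratic's discriminant under the constraints \eqref{1} would complete the count. The main obstacle throughout, and in particular in (i), is the reality of all four roots; the identification of the exceptional locus as the three cevian lines is what makes the deformation argument viable.
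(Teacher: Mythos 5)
Your overall strategy --- parametrize inscribed ellipses by Proposition \ref{P1}, reduce ``passes through $P_1$ and $P_2$'' to a $2\times 2$ polynomial system, eliminate one parameter, and count admissible real roots of a quartic eliminant, propagating the count by continuity from a verified base configuration --- is the same skeleton as the paper's proof, which works with the quartic $G(t)=R(t)S(t)$ derived from (\ref{3}). But two of your concrete claims fail, and they sit exactly where the real difficulty lies. First, the exceptional behaviour in case (ii) is not a degree drop of the eliminant: the leading coefficient of $G$ is $a_Ra_S$, and Lemma \ref{L9} shows $a_R<0$ and $a_S<0$ for \emph{all} admissible configurations, so the eliminant remains an honest quartic on the cevian loci and your proposed factorization of the leading coefficient into the three cevian expressions cannot be correct for this parametrization. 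What actually happens is that $G(0)=(x_2y_1-x_1y_2)^4$ and $G(1)={\large(}(1-x_2)y_1-(1-x_1)y_2{\large)}^4$ (see (\ref{endptsRS})), so when $P_1P_2$ passes through $(0,0)$ or $(1,0)$ two of the four roots migrate to the endpoints $t=0$ or $t=1$ of the admissible interval; and when $x_2(1-y_1)-x_1(1-y_2)=0$ the quartic keeps an interior double root at $t=x_1/(1-y_1)$ but the corresponding $w$ equals $1$, which is inadmissible. The count of two in case (ii) comes from boundary and admissibility analysis, not from a quadratic eliminant.

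Second, and more seriously for case (i): the discriminant locus of the eliminant is \emph{not} contained in the three cevian lines. The paper computes $D_R$ proportional to ${\large(}x_2(1-y_1)-x_1(1-y_2){\large)}^2J^2$ with $J=x_2(1-x_2-y_2)y_1^2-x_1(1-x_1-y_1)y_2^2$, so $R$ acquires a double root on the entire hypersurface $J=0$, which meets the open region where all three cevian expressions are nonzero (the paper's second worked example is such a point). There the quartic has only three distinct roots in $(0,1)$, yet there are still four ellipses, because at the double root $t_0$ the two equations of (\ref{3}) become proportional (Lemma \ref{L11}) and that single value of $t$ carries two distinct admissible values of $w$ (Lemma \ref{L8}). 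So the bijection between ellipses and roots of your univariate eliminant breaks down precisely on $J=0$, and your deformation argument --- constancy of the real admissible root count off the cevian lines --- would return the wrong answer there. Handling this stratum is the bulk of the paper's work (Lemmas \ref{L8}, \ref{L11}, \ref{L13}, \ref{L17} and Case 2 of the proof of (i)); a correct version of your argument must either adjoin $J=0$ to the exceptional locus and treat it separately, or count solutions $(w,t)$ of the system rather than roots of the eliminant. The projective-involution argument offered for (ii) is likewise only a heuristic: it constrains orbit structure but does not by itself produce the count of two.
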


\subsection{Preliminary Results}

Before proving Theorem \ref{T1}, we need several preliminary results.
Throughout this section we assume that $T$ is the unit triangle --the
triangle with vertices $(0,0),(1,0)$, and $(0,1)$, and that $%
(x_{1},y_{1}),(x_{2},y_{2})\in \limfunc{int}\left( T\right) $. We also
assume throughout that, unless stated otherwise, (\ref{1}) holds . Let 
\begin{equation}
G=\left\{ (x,y)\in \Re ^{2}:0<x<1,0<y<1,x+y<1\right\} \text{,}  \label{G}
\end{equation}%
so that $P_{1},P_{2}\in G$.

\begin{lemma}
\label{L1}Only one of the following three conditions can hold for any given
distinct points $P_{1}=(x_{1},y_{1})$ and $P_{2}=(x_{2},y_{2})$ which lie in
the interior of $T$.

(1) $x_{1}y_{2}-y_{1}x_{2}=0$, (2) $(1-x_{2})y_{1}-(1-x_{1})y_{2}=0$, (3) $%
x_{2}(1-y_{1})-x_{1}(1-y_{2})=0$
\end{lemma}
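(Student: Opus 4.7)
The plan is to interpret each of the three algebraic conditions geometrically and then derive a contradiction whenever two are assumed to hold simultaneously. Specifically, I claim that condition $(k)$ in the lemma is equivalent to the statement that $P_1$, $P_2$, and the $k$-th vertex of $T$ are collinear, where the vertices are labeled $V_1=(0,0)$, $V_2=(1,0)$, $V_3=(0,1)$.

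To verify this dictionary, I would use the standard fact that for any point $V=(v_x,v_y)$ the quantity
\[
(x_1-v_x)(y_2-v_y)-(x_2-v_x)(y_1-v_y)
\]
is twice the signed area of the triangle with vertices $V,P_1,P_2$, and so vanishes precisely when those three points are collinear. Substituting $V=V_1,V_2,V_3$ and expanding recovers, up to sign, exactly the expressions in (1), (2), and (3); for example with $V=V_2=(1,0)$ one gets $(x_1-1)y_2-(x_2-1)y_1$, which is $-\bigl[(1-x_2)y_1-(1-x_1)y_2\bigr]$.

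Once this identification is established, the lemma reduces to a one-line observation. Suppose for contradiction that conditions $(i)$ and $(j)$ both hold with $i\ne j$. Then the four points $V_i,V_j,P_1,P_2$ are all collinear. Since $P_1\ne P_2$, the line through them is unique, and since $V_i\ne V_j$, this line must coincide with the side of $T$ joining $V_i$ to $V_j$. But any point of $\mathrm{int}(T)$ lies on no side of $T$, contradicting the fact that $P_1$ is on that line. Hence at most one of the three conditions can hold.

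The only piece of bookkeeping that requires any care is the algebraic identification in the first step, and that is a short determinant expansion. Beyond that the argument rests only on the fact that two distinct points determine a unique line and that interior points of $T$ avoid its edges, so I do not anticipate any substantive obstacle.
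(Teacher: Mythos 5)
Your proof is correct and takes essentially the same route as the paper, whose entire proof is the one-line observation that two distinct interior points cannot lie on a common line through two vertices of $T$; you simply make explicit the determinant computation identifying each condition with collinearity through a vertex, which the paper leaves implicit.
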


\begin{proof}
Clearly distinct points $P_{1}$ and $P_{2}$ which lie in $\limfunc{int}%
\left( T\right) $ cannot lie on the same line through $(0,0)$ and $(1,0)$,
on the same line through $(0,0)$ and $(0,1)$, or on the same line through $%
(1,0)$ and $(0,1)$.
\end{proof}

\begin{lemma}
\label{L12}(i) $x_{1}y_{2}+x_{2}y_{1}+2y_{1}y_{2}-y_{1}-y_{2}<0$ and $%
x_{1}y_{2}+x_{2}y_{1}+2x_{1}x_{2}-x_{2}-x_{1}<0$.

(ii) $2x_{1}x_{2}-x_{1}-x_{2}<0$ and $2y_{1}y_{2}-y_{1}-y_{2}<0$.
\end{lemma}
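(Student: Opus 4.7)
The plan is to prove each inequality by rewriting the left-hand side in a form in which the constraints from (\ref{1}) make the sign manifest; no clever trick is needed, only the right algebraic regrouping.

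For part (ii), I would observe the identity
\[
2 x_1 x_2 - x_1 - x_2 \;=\; -\,x_1 (1 - x_2) \;-\; x_2 (1 - x_1).
\]
By (\ref{1}) we have $0 < x_i < 1$ for $i=1,2$, so each of the four factors $x_1, x_2, 1-x_1, 1-x_2$ is strictly positive, and hence the expression is strictly negative. The second inequality of (ii), $2 y_1 y_2 - y_1 - y_2 < 0$, is obtained by the identical argument with $y_1, y_2$ in place of $x_1, x_2$.

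For part (i), the key is the identity
\[
x_1 y_2 + x_2 y_1 + 2 y_1 y_2 - y_1 - y_2 \;=\; 2 y_1 y_2 \;-\; y_1 (1 - x_2) \;-\; y_2 (1 - x_1).
\]
The triangle constraints $x_i + y_i < 1$ from (\ref{1}) give $y_i < 1 - x_i$ for $i=1,2$, and since $y_1, y_2 > 0$ this yields $y_1 y_2 < y_2(1 - x_1)$ and $y_1 y_2 < y_1(1 - x_2)$. Adding these two strict inequalities shows $2 y_1 y_2 < y_1(1 - x_2) + y_2(1 - x_1)$, which is precisely the desired inequality. The companion inequality $x_1 y_2 + x_2 y_1 + 2 x_1 x_2 - x_1 - x_2 < 0$ follows by the analogous argument after swapping the roles of $x$ and $y$ throughout, using $x_i < 1 - y_i$ in place of $y_i < 1 - x_i$.

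The only non-routine aspect is spotting the correct regrouping in each case; once the identities are written down, the conclusions follow immediately from (\ref{1}). There is no real obstacle in this lemma.
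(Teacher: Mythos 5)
Your proof is correct and takes essentially the same approach as the paper: the regrouping $x_1y_2+x_2y_1+2y_1y_2-y_1-y_2 = 2y_1y_2 - y_1(1-x_2) - y_2(1-x_1)$ is exactly the paper's factorization $y_1y_2\left(2-\tfrac{1-x_1}{y_1}-\tfrac{1-x_2}{y_2}\right)$ written without dividing by $y_1y_2$. The only cosmetic difference is that you prove (ii) directly by its own decomposition, whereas the paper deduces (ii) from (i) by dropping the positive terms $x_1y_2+x_2y_1$; both are immediate.
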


\begin{proof}
Note that $\tfrac{1-x_{1}}{y_{1}}>1$ and $\tfrac{1-x_{2}}{y_{2}}>1$. Then $%
x_{1}y_{2}+x_{2}y_{1}+2y_{1}y_{2}-y_{1}-y_{2}=y_{1}y_{2}\left( 2-\tfrac{%
1-x_{1}}{y_{1}}-\tfrac{1-x_{2}}{y_{2}}\right) <0$. Similarly, $\tfrac{1-y_{1}%
}{x_{1}}>1$\ and $\tfrac{1-y_{2}}{x_{2}}>1$, which implies that $%
x_{1}y_{2}+x_{2}y_{1}+2x_{1}x_{2}-x_{2}-x_{1}=x_{1}x_{2}\left( 2-\tfrac{%
1-y_{1}}{x_{1}}-\tfrac{1-y_{2}}{x_{2}}\right) <0$, which proves (i). (ii)
then follows immediately from (i).
\end{proof}

\begin{lemma}
\label{L2}Let $V$ be the interior of the medial triangle of $T$ $=$ triangle
with vertices at the midpoints of the sides of $T$, and let $S$ be the unit
square $=(0,1)\times (0,1)$. Then

$V=\left\{ \left( \dfrac{1}{2}\dfrac{t}{w+(1-w)t},\dfrac{1}{2}\dfrac{w}{%
w+(1-w)t}\right) \right\} _{(w,t)\in S}$.
\end{lemma}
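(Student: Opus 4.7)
The plan is to identify $V$ by explicit inequalities and then show that the map $\phi(w,t)=\left(\tfrac{t}{2(w+(1-w)t)},\tfrac{w}{2(w+(1-w)t)}\right)$ is a bijection from $S$ onto $V$. First I would observe that the midpoints of the sides of $T$ are $(1/2,0)$, $(0,1/2)$, and $(1/2,1/2)$, and that the three edges of the medial triangle lie on the lines $x+y=1/2$, $x=1/2$, and $y=1/2$. Checking which side of each edge contains the opposite vertex yields the characterization $V=\{(x,y):x+y>1/2,\ x<1/2,\ y<1/2\}$.

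For the forward inclusion $\phi(S)\subseteq V$, I would set $(x,y)=\phi(w,t)$ and compute directly
$$x+y-\tfrac{1}{2}=\frac{wt}{2(w+(1-w)t)},\quad \tfrac{1}{2}-x=\frac{w(1-t)}{2(w+(1-w)t)},\quad \tfrac{1}{2}-y=\frac{t(1-w)}{2(w+(1-w)t)}.$$
Each of these three expressions is strictly positive for $(w,t)\in S$, which is precisely the statement $\phi(w,t)\in V$.

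For the reverse inclusion $V\subseteq \phi(S)$, I would invert $\phi$. Given $(x,y)\in V$, the relation $y/x=w/t$ combined with the defining equation for $x$ leads, after a short manipulation, to the unique candidate
$$w=\frac{2x+2y-1}{2x},\qquad t=\frac{2x+2y-1}{2y},$$
and substitution verifies $\phi(w,t)=(x,y)$. The three inequalities defining $V$ then translate cleanly into $(w,t)\in S$: the condition $x+y>1/2$ forces both numerators to be positive, while $x<1/2$ and $y<1/2$ force $t<1$ and $w<1$, respectively. I do not anticipate any real obstacle here beyond careful bookkeeping; the only step requiring any insight is the observation that $y/x=w/t$, which reduces a two-variable inversion to a single equation and produces both $w$ and $t$ in closed form.
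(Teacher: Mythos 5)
Your proposal is correct and follows essentially the same route as the paper: characterize $V$ by the inequalities $x+y>\tfrac12$, $x<\tfrac12$, $y<\tfrac12$, verify the forward inclusion by the same three sign computations, and invert the map explicitly for the converse (your formulas $w=\tfrac{2x+2y-1}{2x}$, $t=\tfrac{2x+2y-1}{2y}$ agree with the paper's $w=\tfrac12\tfrac{2x+2y-1}{x}$, $t=\tfrac{2xw}{2xw-2x+1}$ after substitution). No gaps.
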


\begin{proof}
It follows easily that $(x,y)\in V\iff \tfrac{1}{2}-x<y<\tfrac{1}{2}\ $and $%
0<x<\tfrac{1}{2}$. Now suppose that $(w,t)\in S$, and let $x=\tfrac{1}{2}%
\tfrac{t}{w+(1-w)t},y=\tfrac{1}{2}\tfrac{w}{w+(1-w)t}$.

$\tfrac{t}{w+(1-w)t}-1=-\allowbreak \tfrac{w(1-t)}{w+(1-w)t}<0$, which
implies that $\tfrac{t}{w+(1-w)t}<1$ and so $0<x<\tfrac{1}{2}$; $\tfrac{w}{%
w+(1-w)t}-1=\allowbreak -\tfrac{(1-w)t}{w+(1-w)t}<0$, which implies that $%
\tfrac{w}{w+(1-w)t}<1$ and so $y<\tfrac{1}{2}$; Finally, $\tfrac{w+t}{%
w+(1-w)t}-1=\allowbreak \tfrac{wt}{w+(1-w)t}>0$, which implies that $\tfrac{%
w+t}{w+(1-w)t}>1$. Thus $x+y=\tfrac{1}{2}\tfrac{t}{w+(1-w)t}+\tfrac{1}{2}%
\tfrac{w}{w+(1-w)t}=\tfrac{1}{2}\tfrac{w+t}{w+(1-w)t}>\tfrac{1}{2}$, and
hence $\left( \tfrac{1}{2}\tfrac{t}{w+(1-w)t},\tfrac{1}{2}\tfrac{w}{w+(1-w)t}%
\right) \in V$. Conversely, suppose that $\left( \tfrac{1}{2}\tfrac{t}{%
w+(1-w)t},\tfrac{1}{2}\tfrac{w}{w+(1-w)t}\right) \in V$. The unique solution
of the system of equations $x=\tfrac{1}{2}\tfrac{t}{w+(1-w)t},y=\tfrac{1}{2}%
\tfrac{w}{w+(1-w)t}$ is $w=\tfrac{1}{2}\tfrac{2x+2y-1}{x},t=\tfrac{2xw}{%
2xw-2x+1}$; $x+y>\tfrac{1}{2},0<x<\tfrac{1}{2}$ implies that $w,t>0$; $%
\tfrac{1}{2}\tfrac{2x+2y-1}{x}-1=\allowbreak \tfrac{1}{2}\tfrac{2y-1}{x}<0$
since $y<\tfrac{1}{2}$, and thus $w<1$; $\tfrac{1}{2}\tfrac{2x+2y-1}{y}%
-1=\allowbreak \tfrac{1}{2}\tfrac{2x-1}{y}<0$ since $x<\tfrac{1}{2}$, and
thus $t<1$. Hence $(w,t)\in S$.
\end{proof}

\qquad We state the following known result without proof. The first
inequality insures that the conic is an ellipse, while the second insures
that the ellipse is non--trivial.

\begin{lemma}
\label{L3}The equation $Ax^{2}+By^{2}+2Cxy+Dx+Ey+F=0$, with $A,B>0$, is the
equation of an ellipse if and only if $AB-C^{2}>0$ and $%
AE^{2}+BD^{2}+4FC^{2}-2CDE-4ABF>0$.
\end{lemma}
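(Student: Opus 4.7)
My plan is to reduce the equation to center-and-axes form by translation and then read off the two conditions. Assuming first that $AB-C^{2}>0$ and $A,B>0$, the quadratic part's Hessian is positive definite, so the gradient system
\begin{align*}
2Ax+2Cy+D &=0,\\
2Cx+2By+E &=0
\end{align*}
has a unique solution $(x_{0},y_{0})$ which I obtain by Cramer's rule: $x_{0}=(CE-BD)/(2(AB-C^{2}))$ and $y_{0}=(CD-AE)/(2(AB-C^{2}))$.

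Next I substitute $x=x_{0}+u$, $y=y_{0}+v$. By the choice of $(x_{0},y_{0})$ the linear terms vanish, and the equation becomes $Au^{2}+2Cuv+Bv^{2}+F'=0$ with $F'=F+\tfrac{1}{2}(Dx_{0}+Ey_{0})$. A direct simplification using the closed forms for $x_{0},y_{0}$ will yield
$$F' \;=\; -\,\frac{AE^{2}+BD^{2}+4FC^{2}-2CDE-4ABF}{4(AB-C^{2})}.$$
Since $A>0$ and $AB-C^{2}>0$ make the quadratic form $Au^{2}+2Cuv+Bv^{2}$ positive definite, the level set $Au^{2}+2Cuv+Bv^{2}=-F'$ is a nondegenerate real ellipse exactly when $-F'>0$, a single point when $F'=0$, and empty when $F'>0$. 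Given $AB-C^{2}>0$, this translates directly into the stated inequality $AE^{2}+BD^{2}+4FC^{2}-2CDE-4ABF>0$.

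For the converse, I would start from the assumption that the equation defines a real nondegenerate ellipse. Boundedness then forces the quadratic part to be definite; combined with $A,B>0$ this gives $AB-C^{2}>0$, after which the translation argument above applies in reverse and nondegeneracy supplies the second inequality. The only mildly delicate step is the algebraic identity that produces the closed form for $F'$; everything else is routine linear algebra, so I do not anticipate a genuine obstacle.
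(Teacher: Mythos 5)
The paper deliberately gives no proof of this lemma --- it is introduced with ``We state the following known result without proof,'' so there is no argument of the paper's to compare yours against. Your proof is correct and self-contained. The center computed by Cramer's rule is right, and the key identity checks out: since $Ax_{0}^{2}+Cx_{0}y_{0}+\tfrac{D}{2}x_{0}=\tfrac{x_{0}}{2}\left(2Ax_{0}+2Cy_{0}+D\right)=0$ and similarly for the $y_{0}$ terms, the constant after translation is $F'=F+\tfrac{1}{2}(Dx_{0}+Ey_{0})$, and substituting $Dx_{0}+Ey_{0}=\frac{2CDE-BD^{2}-AE^{2}}{2(AB-C^{2})}$ gives exactly $F'=-\frac{AE^{2}+BD^{2}+4FC^{2}-2CDE-4ABF}{4(AB-C^{2})}$, so with $A>0$ and $AB-C^{2}>0$ the locus $Au^{2}+2Cuv+Bv^{2}=-F'$ is a genuine ellipse precisely when the stated quantity is positive. (Equivalently, that quantity is $-4\Delta$ where $\Delta$ is the determinant of the full $3\times 3$ matrix of the conic, so your criterion agrees with the classical discriminant test.) The converse is only sketched: the claim that boundedness of a nondegenerate real ellipse forces $AB-C^{2}>0$ is standard conic classification (if $AB-C^{2}\le 0$ the nonempty zero set is unbounded or degenerate), and a careful writeup would spell that out since the translation step itself presupposes $AB-C^{2}\neq 0$; but this is routine and does not constitute a gap in substance. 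In short, you have supplied a correct elementary proof of a fact the paper simply cites as known.
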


\begin{proposition}
\label{P1}(i) $E$ is an ellipse inscribed in $T$\ if and only if the general
equation of $E$ is given by 
\begin{equation}
w^{2}x^{2}+t^{2}y^{2}-2wt\left( 2wt-2w-2t+1\right)
xy-2w^{2}tx-2t^{2}wy+t^{2}w^{2}=0  \label{2}
\end{equation}

for some $(w,t)\in S=(0,1)\times (0,1)$. Furthermore,

(ii) If $E$ is the ellipse given in (i) with equation (\ref{2}) for some $%
0<t<1,0<w<1$, then $E$ is tangent to the three sides of $T$\ at the points $%
T_{1}=(t,0),T_{2}=(0,w)$, and $T_{3}=\left( \dfrac{t(1-w)}{t+(1-2t)w},\dfrac{%
w(1-t)}{t+(1-2t)w}\right) $.
\end{proposition}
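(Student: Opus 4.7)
First, I would observe that any ellipse $E$ inscribed in $T$ must touch each side at an interior point, so its tangent points on the legs $y=0$ and $x=0$ can be written as $(t,0)$ and $(0,w)$ for some $(w,t)\in S$. The standard conic pencil construction says that conics tangent to two fixed lines at two fixed points form a one-parameter family, and in our coordinates this pencil is
\[
xy \;-\; \lambda(wx+ty-wt)^{2} \;=\; 0, \qquad \lambda\in\mathbb{R},
\]
since the degenerate conic $xy=0$ (the two legs) and the double line $(wx+ty-wt)^{2}=0$ both belong to it, and a dimension count shows these span the full pencil. Every ellipse inscribed in $T$ must therefore satisfy this equation for some $(w,t,\lambda)$.

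To pin down $\lambda$, I would substitute $y=1-x$ and require the resulting quadratic in $x$ to have discriminant zero. The discriminant is a priori quadratic in $\lambda$, but the $\lambda^{2}$ terms cancel, leaving a linear equation whose unique solution is $\lambda = 1/[4tw(1-t)(1-w)]$. Multiplying the pencil equation by $4tw(1-t)(1-w)$ and expanding $(wx+ty-wt)^{2}$ recovers equation (\ref{2}) exactly, which settles both directions of (i) at the algebraic level: every inscribed ellipse has this form, and for each $(w,t)\in S$ the equation (\ref{2}) is the unique candidate conic.

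It remains to verify that this conic really is an ellipse, which I would do via Lemma \ref{L3} with $A=w^{2}$, $B=t^{2}$, $2C=-2wt(2wt-2w-2t+1)$, $D=-2w^{2}t$, $E=-2wt^{2}$, $F=w^{2}t^{2}$. A short factorization yields $AB-C^{2} = 4w^{2}t^{2}(w+t-wt)(1-w)(1-t)$, and the second Lemma \ref{L3} invariant works out to $16 w^{4}t^{4}(1-w)^{2}(1-t)^{2}$; both are strictly positive on $S$. For part (ii), the double root of the quadratic obtained by setting $y=1-x$, once the known value of $\lambda$ is substituted, simplifies to $x_{3} = t(1-w)/(t+w-2wt)$; since $t+w-2wt = t+(1-2t)w$, this together with $y_{3}=1-x_{3}$ gives exactly the stated $T_{3}$.

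The main obstacle I anticipate is not the algebra but confirming that the conic lies inside $T$ rather than on the wrong side of one of its tangent lines (a conic tangent to three lines can in principle sit on the far side of any of them). I would handle this by a connectedness argument: the parameter set $S$ is connected, no member of the family degenerates (by the Lemma \ref{L3} inequalities just computed), and one explicit instance — the choice $w=t=1/2$, which yields the Steiner inellipse $4x^{2}+4y^{2}+4xy-4x-4y+1=0$ centered at the centroid $(1/3,1/3)$ — visibly sits inside $T$, so every member of the family must do the same.
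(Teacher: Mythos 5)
Your proof is correct, but it reaches Proposition \ref{P1} by a genuinely different route than the paper. The paper's hardest step is the ``only if'' direction, and it disposes of this by quoting the known fact \cite{C} that each point of the open medial triangle is the center of exactly one inscribed ellipse, combining that with Lemma \ref{L2} (the parametrization of the medial triangle by $(w,t)\in S$) and a check that the conics (\ref{2}) realize every such center; equation (\ref{2}) itself is merely \emph{verified} to work by computing $AB-C^{2}$, the nontriviality invariant, the values of $F$ at $T_{1},T_{2},T_{3}$, and the implicit derivative at those points. You instead \emph{derive} (\ref{2}): the double-contact pencil $xy-\lambda(wx+ty-wt)^{2}=0$ captures every conic tangent to the two legs at $(t,0)$ and $(0,w)$, and forcing tangency to the hypotenuse kills the $\lambda^{2}$ terms in the discriminant and pins down $\lambda=1/\bigl(4tw(1-t)(1-w)\bigr)$, which after clearing denominators is exactly (\ref{2}) (your $xy$ coefficient $2wt-4tw(1-t)(1-w)=-2wt(2wt-2w-2t+1)$ checks out, as does the double root $x_{3}=t(1-w)/(t+w-2wt)$ giving $T_{3}$). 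This buys self-containedness --- you need neither the center-uniqueness theorem from \cite{C} nor Lemma \ref{L2} --- and it explains where (\ref{2}) comes from, at the cost of a projective dimension count for the pencil that you only sketch. Two small points you should tighten: (a) justify that an inscribed ellipse is tangent to each leg at an \emph{interior} point of that leg (tangency at a vertex would force two distinct tangent lines at one smooth point, so this is quick); (b) your connectedness argument for containment works, but there is a shorter one, implicit in the paper: an ellipse tangent to a line lies in one closed half-plane of that line, and the presence of the other two tangency points (e.g.\ $(0,w)$ with $w>0$ forces the half-plane $y\geq 0$) selects the half-plane containing $T$ in each of the three cases, so no base point and no appeal to the Steiner inellipse is needed.
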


\begin{proof}
First, suppose that $E$ is given by (\ref{2}) for some $(w,t)\in S$. Then $E$
has the form $Ax^{2}+By^{2}+2Cxy+Dx+Ey+F=0$, where $A=w^{2}$, $B=t^{2}$, $%
C=-wt\left( 2wt-2w-2t+1\right) $, $D=-2w^{2}t$, $E=-2t^{2}w$, and $%
F=t^{2}w^{2}$. $AB-C^{2}=\allowbreak 4w^{2}t^{2}\left( 1-t\right) \left(
1-w\right) {\large (}(1-t)w+t{\large )}>0$ and $%
AE^{2}+BD^{2}+4FC^{2}-2CDE-4ABF=\allowbreak 16w^{4}t^{4}\left( 1-w\right)
^{2}\left( 1-t\right) ^{2}>0$. Thus by Lemma \ref{L3}, (\ref{2}) defines the
equation of an ellipse for any $(w,t)\in S$. Now let $%
F(x,y)=w^{2}x^{2}+t^{2}y^{2}-2wt\left( 2tw-2w-2t+1\right)
xy-2w^{2}tx-2t^{2}wy+t^{2}w^{2}$, the left hand side of (\ref{2}). Then $%
F(t,0)=\allowbreak 0$, $F(0,w)=\allowbreak 0$, and $F\left( \tfrac{t(1-w)}{%
t+(1-2t)w},\tfrac{w(1-t)}{t+(1-2t)w}\right) =\allowbreak 0$, which implies
that the three points $T_{1},T_{2}$, and $T_{3}$ lie on $E$. Differentiating
both sides of the equation in (\ref{2}) with respect to $x$ yields $\dfrac{dy%
}{dx}=D(x,y)$, where $D(x,y)=-\tfrac{w(2ywt^{2}-2ywt-2yt^{2}+wt+yt-xw)}{%
t\left( 2xw^{2}t-2xwt-2xw^{2}+wt+xw-yt\right) }$. $D(t,0)=\allowbreak 0=$
slope of horizontal side of $T$ and $D\left( \tfrac{t(1-w)}{t+(1-2t)w},%
\tfrac{w(1-t)}{t+(1-2t)w}\right) =\allowbreak -1=$ slope of the hypotenuse
of $T$. When $x=0,y=w$, the denominator of $D(x,y)$ equals $0$, but the
numerator of $D(x,y)$ equals $\allowbreak 2tw^{2}\left( 1-w\right) \left(
1-t\right) \neq 0$. Thus $E$ is tangent to the vertical side of $T$. For any
simple closed convex curve, such as an ellipse, tangent to each side of $T$
then implies that that curve lies in $T$. Since it follows easily that $%
T_{1},T_{2}$, and $T_{3}$ lie on the three sides of $T$, that proves that $E$
is inscribed in $T$. Second, suppose that $E$ is an ellipse inscribed in $T$%
. It is well known \cite{C} that each point of $V$, the medial triangle of $T
$, is the center of one and only one ellipse inscribed in $T$, and thus the
center of $E$ lies in $V$. By Lemma \ref{L2}, the center of $E$ has the form 
$\left( \tfrac{1}{2}\tfrac{t}{w+(1-w)t},\tfrac{1}{2}\tfrac{w}{w+(1-w)t}%
\right) ,(w,t)\in S$. Now it is not hard to show that each ellipse given by (%
\ref{2}) has center $\left( \tfrac{1}{2}\tfrac{t}{w+(1-w)t},\tfrac{1}{2}%
\tfrac{w}{w+(1-w)t}\right) $. We have just shown that (\ref{2}) represents a
family of ellipses inscribed in $T$\ as $(w,t)$ varies over $S$, so if $E$
were not given by (\ref{2}) for some $(w,t)\in S$, then there would have to
be two ellipses inscribed in $T$ and with the same center. That cannot
happen since each point of $V$ is the center of only one ellipse inscribed
in $T$. That proves (i). We have also just shown that if $E$ is given by (%
\ref{2}), then $E$ is tangent to the three sides of $T$\ at the points $%
T_{1},T_{2}$, and $T_{3}$, which proves (ii).
\end{proof}

We find it convenient to introduce the following notation for fixed $%
(x_{1},x_{2},y_{1},y_{2})$.

\begin{equation}
q_{i}(t)=\allowbreak (x_{i}-t)^{2}+4x_{i}y_{i}t(1-t),i=1,2\text{.}  \notag
\end{equation}

Note that $q_{i}$ has discriminant $4x_{i}^{2}\left( 1-2y_{i}\right)
^{2}-4\left( 1-4x_{i}y_{i}\right) x_{i}^{2}=\allowbreak $

$16x_{i}^{2}y_{i}\left( x_{i}+y_{i}-1\right) <0$ by (\ref{1}), which implies
that $q_{i}$ has no real roots, $i=1,2$. Since $q_{i}(0)>0$, it follows that 
\begin{equation}
q_{i}(t)>0,t\in \Re ,i=1,2\text{.}  \label{q}
\end{equation}

\begin{proposition}
\label{P2}Let $P_{1}=(x_{1},y_{1})$ and $P_{2}=(x_{2},y_{2})$ be distinct
points in $T$. Then there is an ellipse inscribed in $T$ which passes
through $P_{1}$ and $P_{2}$ if and only if the following system of equations
hold for some $(w,t)\in S=(0,1)\times (0,1)$:%
\begin{eqnarray}
q_{1}(t)w^{2}+2ty_{1}{\large (}(2x_{1}-1)t-x_{1}{\large )}\allowbreak
w+t^{2}y_{1}^{2} &=&0  \label{3} \\
q_{2}(t)w^{2}+2ty_{2}{\large (}(2x_{2}-1)t-x_{2}{\large )}\allowbreak
w+t^{2}y_{2}^{2} &=&0\text{.}  \notag
\end{eqnarray}
\end{proposition}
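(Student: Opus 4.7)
The plan is to reduce the statement to a direct substitution into Proposition \ref{P1}. By Proposition \ref{P1}(i), the ellipses inscribed in $T$ are precisely the conics of the form (\ref{2}) with $(w,t)\in S$. Hence an ellipse inscribed in $T$ passes through both $P_1$ and $P_2$ if and only if there exist $(w,t)\in S$ such that substituting $(x,y)=(x_i,y_i)$ into the left-hand side of (\ref{2}) yields $0$ for each $i=1,2$. So the proof is just the verification that, after such a substitution, the resulting expression is exactly the $i$-th equation of (\ref{3}).

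Concretely, for fixed $i$ I would substitute $(x_i,y_i)$ into the left side of (\ref{2}) and then collect terms as a polynomial in $w$ (with $t$ and $(x_i,y_i)$ as parameters). The constant term is immediately $t^2y_i^2$. The coefficient of $w$ comes from the cross term $-2wt(2wt-2w-2t+1)x_iy_i$ together with $-2t^2wy_i$; gathering these gives $2ty_i\bigl((2x_i-1)t-x_i\bigr)$, as in the middle term of (\ref{3}). The only mildly interesting step is recognising that the coefficient of $w^2$ collapses to $q_i(t)$: the contributing pieces are $x_i^2$, the $-4w^2t^2x_iy_i+4w^2tx_iy_i$ part of the cross term, $-2w^2tx_i$, and $t^2w^2$, and after factoring out $w^2$ these sum to
\[
x_i^2-2tx_i+t^2+4x_iy_it(1-t)=(x_i-t)^2+4x_iy_it(1-t)=q_i(t).
\]

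Thus (\ref{2}) evaluated at $(x_i,y_i)$ equals the left side of the $i$-th equation of (\ref{3}), and the equivalence follows. The only potential obstacle is purely bookkeeping in the cross-term expansion, so I would take some care to group the $w^2$ contributions correctly; no conceptual difficulty arises because Proposition \ref{P1} already provides the required parametrisation of inscribed ellipses.
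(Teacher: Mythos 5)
Your proposal is correct and is essentially the paper's own proof: the paper likewise obtains (\ref{3}) by rewriting (\ref{2}) as a quadratic in $w$ and substituting $(x,y)=(x_j,y_j)$ for $j=1,2$, invoking Proposition \ref{P1}(i) for the characterisation of inscribed ellipses. Your coefficient bookkeeping for the $w^2$, $w$, and constant terms checks out.
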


\begin{proof}
This follows directly from Proposition \ref{P1}(i) upon rewriting (\ref{2})
as a quadratic in $w$ and then letting $x=x_{j},y=y_{j},j=1,2$.
\end{proof}

\begin{lemma}
\label{L5}Suppose that $(w_{0},t_{0})$ is a solution of (\ref{3}).

(i) If $0\leq t_{0}\leq 1$, then $0\leq w_{0}\leq 1$ and (ii) If $0<t_{0}<1$
and $x_{1}(1-y_{2})-x_{2}(1-y_{1})\neq 0$, then $0<w_{0}<1$.
\end{lemma}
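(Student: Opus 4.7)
The plan is to treat the first equation in (\ref{3}) as a quadratic in $w$ with positive leading coefficient $q_{1}(t_{0})$ (by (\ref{q})), and to extract essentially all of $w_{0}$'s behaviour from that single equation; the second equation is invoked only in part (ii), to exclude the endpoint $w_{0}=1$.

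For part (i), I first dispose of the boundary values $t_{0}\in\{0,1\}$ by direct substitution: at $t_{0}=0$ the first equation collapses to $x_{1}^{2}w_{0}^{2}=0$, forcing $w_{0}=0$; at $t_{0}=1$ it is the perfect square $((x_{1}-1)w_{0}+y_{1})^{2}=0$, giving $w_{0}=y_{1}/(1-x_{1})\in(0,1)$ by (\ref{1}). For $t_{0}\in(0,1)$ the discriminant of the quadratic simplifies to $16x_{1}y_{1}^{2}t_{0}^{3}(1-t_{0})(1-x_{1}-y_{1})>0$, so there are two distinct real roots. Using $q_{1}(t_{0})>0$, $t_{0}^{2}y_{1}^{2}>0$, and the easy fact that $x_{1}(1-2t_{0})+t_{0}>0$ for $x_{1}\in(0,1)$ and $t_{0}\in(0,1)$, Vieta's formulas show that both the sum and the product of the roots are positive, so both roots are positive. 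A direct algebraic simplification shows
\[
q_{1}(t_{0})+2t_{0}y_{1}\bigl((2x_{1}-1)t_{0}-x_{1}\bigr)+t_{0}^{2}y_{1}^{2}=\bigl(x_{1}-t_{0}(1-y_{1})\bigr)^{2}\ge 0,
\]
i.e.\ the value of the quadratic at $w=1$ is non-negative, so $w=1$ lies outside the open interval between the two roots: either both roots are $\le 1$ or both are $\ge 1$.

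Ruling out the second alternative is the main obstacle, and I would argue by continuity. Let $R(t)$ denote the larger root of the quadratic for $t\in[0,1]$; then $R$ is continuous with $R(0)=0$, and $R(t)=1$ is equivalent to $(x_{1}-t(1-y_{1}))^{2}=0$, which holds only at the single point $t^{\ast}:=x_{1}/(1-y_{1})\in(0,1)$. At $t=t^{\ast}$, Vieta gives the other root as $y_{1}/(4-3y_{1}-4x_{1})\in(0,1)$ (both the positivity of the denominator and the inequality $y_{1}<4-3y_{1}-4x_{1}$ follow from $x_{1}+y_{1}<1$). By continuity, the two roots for $t$ near $t^{\ast}$ stay close to $1$ and to $y_{1}/(4-3y_{1}-4x_{1})$; if the root near $1$ ever exceeded $1$ for some nearby $t$, then $w=1$ would lie strictly between the two roots, contradicting the non-negative evaluation displayed above. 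Hence $R$ only touches $1$ at $t^{\ast}$ without crossing it. If $R(t_{1})>1$ for some $t_{1}\in(0,1)$, the intermediate value theorem applied to $R$ on $[0,t_{1}]$ would force a second zero of $R(t)-1$, contradicting uniqueness of $t^{\ast}$. Therefore $R(t)\le 1$ throughout $[0,1]$, and $w_{0}\in[0,1]$.

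For part (ii), part (i) gives $w_{0}\in[0,1]$. The value $w_{0}=0$ is excluded because substituting $w=0$ in the first equation of (\ref{3}) yields $t_{0}^{2}y_{1}^{2}$, which is nonzero since $t_{0},y_{1}>0$. The value $w_{0}=1$ forces the perfect-square evaluation $(x_{i}-t_{0}(1-y_{i}))^{2}=0$ from the $i$th equation for $i=1,2$, so $t_{0}=x_{1}/(1-y_{1})=x_{2}/(1-y_{2})$, equivalently $x_{1}(1-y_{2})=x_{2}(1-y_{1})$, contradicting the hypothesis $x_{1}(1-y_{2})-x_{2}(1-y_{1})\neq 0$.
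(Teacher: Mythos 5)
Your proof is correct, but it takes a genuinely different route from the paper's. The paper exploits the symmetry between $w$ and $t$ in (\ref{3}): it solves each equation for $t$ as a function of $w$ by the quadratic formula, observes that the resulting expression contains $\sqrt{w(1-w)}$ over a denominator with no real zeros, and concludes that $w_{0}\notin[0,1]$ would force $t_{0}$ to be non-real --- a two-line argument for part (i). You instead work entirely inside the first equation viewed as a quadratic in $w$: positive leading coefficient $q_{1}(t_{0})$, positive discriminant $16x_{1}y_{1}^{2}t_{0}^{3}(1-t_{0})(1-x_{1}-y_{1})$, Vieta to get two positive roots, the perfect-square evaluation $\bigl(x_{1}-t_{0}(1-y_{1})\bigr)^{2}$ at $w=1$ to show $1$ never separates the roots, and then a continuity/intermediate-value argument on the larger root $R(t)$ (anchored at $R(0)=0$ and at the local analysis near $t^{\ast}=x_{1}/(1-y_{1})$, where the companion root $y_{1}/(4-4x_{1}-3y_{1})$ lies in $(0,1)$) to rule out both roots exceeding $1$. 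Interestingly, this local analysis at $t^{\ast}$ reproduces exactly the computation the paper performs in Case 2 of Lemma \ref{L8}, so your argument in effect absorbs part of that later lemma. What the paper's duality trick buys is brevity; what your argument buys is that it is self-contained in the single variable $w$ and makes explicit where the roots actually sit. One phrasing in your write-up is looser than it should be: when $R(t_{1})>1$ with $t_{1}>t^{\ast}$, the intermediate value theorem on $[0,t_{1}]$ only produces one zero of $R-1$, which could be $t^{\ast}$ itself; you need to apply it on an interval $[t',t_{1}]$ with $t'\in(t^{\ast},t_{1})$ and $R(t')<1$, which your preceding sentence (that $R\leq 1$ on a neighborhood of $t^{\ast}$ and equals $1$ only there) does supply --- so the argument closes, but the sentence as written should be repaired. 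Your part (ii) coincides with the paper's: both exclude $w_{0}=0$ by evaluating at $w=0$ and exclude $w_{0}=1$ by deriving $t_{0}=x_{1}/(1-y_{1})=x_{2}/(1-y_{2})$ from both equations.
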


\begin{proof}
Solving each equation in (\ref{3}) separately for $w$ as a function of $t$
yields 
\begin{equation}
w=ty_{j}\dfrac{\allowbreak x_{j}+(1-2x_{j})t\pm 2\sqrt{t(1-t)}A_{j}}{%
\allowbreak q_{j}(t)},j=1,2\text{.}  \label{wt}
\end{equation}%
Suppose that $(w_{0},t_{0})$ is a solution of (\ref{3}). If $0\leq t_{0}\leq
1$, then we know that $w_{0}$ is real by looking at (\ref{wt}) with $t=t_{0}$%
. Now, one can also solve \ref{3}) for $t$ as a function of $w$ to obtain 
\begin{equation}
t=wx_{j}\dfrac{\allowbreak y_{j}+(1-2y_{j})w\pm 2\sqrt{w(1-w)}\sqrt{%
y_{j}(1-x_{j}-y_{j})}}{\allowbreak (y_{j}-w)^{2}+4x_{j}y_{j}w(1-w)},j=1,2%
\text{.}  \label{tw}
\end{equation}

Note that $\allowbreak $the quadratic in $w$,$\allowbreak \
(y_{j}-w)^{2}+4x_{j}y_{j}w(1-w)$, has negative discriminant and thus no real
roots. $\allowbreak $If $w_{0}\notin \lbrack 0,1]$, then (\ref{tw}) with $%
w=w_{0}$ shows that $t_{0}$ is \textbf{not} real, a contradiction. That
proves (i). Now suppose that $0<t_{0}<1$. If $w_{0}=0$, then by (\ref{tw}), $%
t_{0}=0$, so $0<w_{0}$. If $w_{0}=1$, then by (\ref{tw}), with $w=1$, $t_{0}=%
\tfrac{x_{j}}{1-y_{j}},j=1,2$. But then $\tfrac{x_{1}}{1-y_{1}}=\tfrac{x_{2}%
}{1-y_{2}}$, which implies that $x_{1}(1-y_{2})-x_{2}(1-y_{1})=0$. That
proves (ii).
\end{proof}

\begin{lemma}
\label{L6}Suppose that $x_{2}y_{1}-x_{1}y_{2}>0$. If $%
(1-x_{2})y_{1}-(1-x_{1})y_{2}>0$ or if $x_{2}(1-y_{1})-x_{1}(1-y_{2})<0$,
then $J>0$.
\end{lemma}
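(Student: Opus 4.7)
The plan is to first produce an explicit formula for $J$ as a polynomial in the coordinates $x_1,y_1,x_2,y_2$. Presumably $J$ is the quantity obtained by eliminating $w$ from the system \eqref{3} (a resultant, or the discriminant of the reduced equation in $t$), and after clearing the manifestly positive factors such as $t^2$ and $q_i(t)$ (positive by \eqref{q}), one is left with a polynomial in the coordinates to analyze.

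Introduce the three vertex-line quantities
\[
L_1 = x_2 y_1 - x_1 y_2,\qquad L_2 = (1-x_2) y_1 - (1-x_1) y_2,\qquad L_3 = x_2(1-y_1) - x_1(1-y_2).
\]
These satisfy the elementary identities $L_1 + L_2 = y_1 - y_2$ and $L_1 + L_3 = x_2 - x_1$, so $J$ may be regarded as a polynomial in any two of them with coefficients depending on $x_1,x_2,y_1,y_2$. The strategy is to establish an algebraic identity of the form
\[
J \;=\; \alpha\, L_1 L_2 \;-\; \beta\, L_1 L_3 \;+\; R,
\]
where $\alpha,\beta \ge 0$ and $R \ge 0$ throughout the interior region \eqref{G}. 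Lemma \ref{L12} is the natural source of the nonnegativity estimates needed for $R$, since it controls precisely the kind of mixed monomials in $x_i,y_i$ one expects to encounter.

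Granted such an identity, the two cases follow immediately: if $L_1>0$ and $L_2>0$, then $\alpha L_1 L_2 > 0$; if $L_1>0$ and $L_3<0$, then $-\beta L_1 L_3 > 0$; in either case adding $R\ge 0$ yields $J>0$.

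The main obstacle is producing the identity. The expression for $J$ is expected to be quartic in $(x_1,y_1,x_2,y_2)$, and the correct grouping into $L_1L_2$, $-L_1L_3$, and a manifestly nonnegative remainder is not evident a priori. As a fallback, one can substitute $L_2 = (y_1-y_2) - L_1$ and $L_3 = (x_2-x_1) - L_1$ to rewrite $J$ as a polynomial in the single variable $L_1$ with coefficients involving $x_1,x_2,y_1,y_2$, and then exploit the sign hypotheses together with the inequalities from Lemma \ref{L12} and \eqref{1} to verify positivity case by case. This converts the problem into checking positivity of a specific polynomial on a semi-algebraic region, which, although potentially tedious, is a purely mechanical computation.
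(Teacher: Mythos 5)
There is a genuine gap: your proposal never actually proves anything, because it is built on two unknowns that you acknowledge but do not resolve. First, you only guess at what $J$ is ("presumably\dots a resultant"). In fact $J$ is defined explicitly in (\ref{28}) as $J=x_{2}(1-x_{2}-y_{2})y_{1}^{2}-x_{1}(1-x_{1}-y_{1})y_{2}^{2}$, and without that formula no algebraic identity can be verified. Second, the entire argument hinges on an identity $J=\alpha L_{1}L_{2}-\beta L_{1}L_{3}+R$ with $\alpha,\beta\geq 0$ and $R\geq 0$ on all of $G\times G$, which you describe as "the main obstacle" and do not produce. Such a single identity is in fact problematic: in the case $L_{1}>0$, $L_{3}<0$ the sign of $L_{2}$ is unconstrained (and vice versa), so the term $\alpha L_{1}L_{2}$ can be negative exactly when you need the other terms to carry the estimate; a globally nonnegative $R$ absorbing this is not something your outline gives any reason to believe exists. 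The fallback (rewrite in $L_{1}$ and "check positivity case by case") is a promise of a computation, not a proof.

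For comparison, the paper's proof is a short direct estimate from the explicit formula, and it uses two \emph{different} case-specific decompositions rather than one global identity. Writing $L_{1}=x_{2}y_{1}-x_{1}y_{2}$, $L_{2}=(1-x_{2})y_{1}-(1-x_{1})y_{2}$, $L_{3}=x_{2}(1-y_{1})-x_{1}(1-y_{2})$, the computations in the paper amount to the exact identities
\begin{equation*}
J=x_{2}y_{1}\,L_{2}+y_{2}(1-x_{1}-y_{1})\,L_{1}
\qquad\text{and}\qquad
J=-\frac{x_{1}y_{2}^{2}}{x_{2}}\,L_{3}+\frac{(1-x_{2}-y_{2})(x_{2}y_{1}+x_{1}y_{2})}{x_{2}}\,L_{1},
\end{equation*}
each with coefficients of the correct sign by (\ref{1}) alone (Lemma \ref{L12} is not needed). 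The first identity disposes of the case $L_{1}>0$, $L_{2}>0$; the second of the case $L_{1}>0$, $L_{3}<0$. So your instinct to express $J$ in terms of the vertex-line quantities is sound, but the working decomposition is linear in the $L_{i}$ and tailored to each hypothesis, not a single quadratic-in-$L_{i}$ identity with a nonnegative remainder. To repair your proof you would need to state the formula for $J$ and then exhibit decompositions of this kind explicitly.
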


\begin{proof}
Suppose first that $(1-x_{2})y_{1}-(1-x_{1})y_{2}>0$. Then $%
x_{2}(1-x_{2}-y_{2})y_{1}^{2}-x_{1}(1-x_{1}-y_{1})y_{2}^{2}=$

$%
x_{2}(1-x_{2})y_{1}y_{1}-x_{2}y_{2}y_{1}^{2}-x_{1}(1-x_{1})y_{2}^{2}+x_{1}y_{1}y_{2}^{2}> 
$

$%
x_{2}(1-x_{1})y_{2}y_{1}-x_{2}y_{2}y_{1}^{2}-x_{1}(1-x_{1})y_{2}^{2}+x_{1}y_{1}y_{2}^{2}= 
$

$\allowbreak y_{2}\left( 1-x_{1}-y_{1}\right) \left(
x_{2}y_{1}-x_{1}y_{2}\right) >0$. Second, suppose that $%
x_{2}(1-y_{1})-x_{1}(1-y_{2})<0$; Since $1-y_{1}<\tfrac{x_{1}(1-y_{2})}{x_{2}%
}$, we have

$%
x_{2}(1-x_{2}-y_{2})y_{1}^{2}-x_{1}(1-x_{1}-y_{1})y_{2}^{2}>x_{2}(1-x_{2}-y_{2})y_{1}^{2}-x_{1}\left( 
\tfrac{x_{1}(1-y_{2})}{x_{2}}-x_{1}\right) y_{2}^{2}=\allowbreak \tfrac{%
\left( 1-x_{2}-y_{2}\right) \left( x_{2}y_{1}-x_{1}y_{2}\right)
(x_{2}y_{1}+x_{1}y_{2})}{x_{2}}\allowbreak >0$.
\end{proof}

\qquad We now prove the following lemma about connected sets defined by
inequalities involving continuous functions.

\begin{lemma}
\label{L7}Let $f(x_{1},...,x_{n-1})$ be a continuous function of $%
x_{1},...,x_{n-1}$ in $\bar{W}$, where $W$ is some connected domain in $%
R^{n-1}$. Then the sets

(i) $\left\{ (x_{1},...,x_{n-1},x_{n})\in W\times \Re
:x_{n}>f(x_{1},...,x_{n-1})\right\} $ and

(ii) $\left\{ (x_{1},...,x_{n-1},x_{n})\in W\times \Re
:x_{n}<f(x_{1},...,x_{n-1})\right\} $ are connected.
\end{lemma}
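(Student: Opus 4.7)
The plan is to prove each set is path connected, from which connectedness follows. I treat case (i); case (ii) then follows either by an analogous argument or by applying (i) to $-f$ with $x_n$ replaced by $-x_n$.

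First I note that $W$, being a connected domain in $\mathbb{R}^{n-1}$ (i.e.\ a connected open set), is path connected, since every connected open subset of Euclidean space is path connected. Let $U$ denote the set in (i). I fix two points $P=(p,a)$ and $Q=(q,b)$ in $U$, where $p,q\in W$, $a>f(p)$, and $b>f(q)$, and I shall construct a continuous path in $U$ from $P$ to $Q$.

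Choose a continuous path $\gamma\colon[0,1]\to W$ with $\gamma(0)=p$ and $\gamma(1)=q$. Then $f\circ\gamma$ is continuous on the compact interval $[0,1]$, so it attains a finite maximum; pick a real number
\[
M>\max\bigl\{a,\;b,\;\max_{t\in[0,1]}f(\gamma(t))\bigr\}.
\]
Now concatenate three continuous sub-paths:
\[
\alpha(s)=(p,(1-s)a+sM),\qquad \beta(s)=(\gamma(s),M),\qquad \delta(s)=(q,(1-s)M+sb),
\]
each parametrized by $s\in[0,1]$. Along $\alpha$ the last coordinate runs monotonically between $a$ and $M$ and so stays $\ge a>f(p)$; along $\delta$ it similarly stays $\ge b>f(q)$; along $\beta$ the last coordinate is the constant $M$, which exceeds $f(\gamma(s))$ for every $s\in[0,1]$ by the choice of $M$. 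Hence each sub-path lies entirely in $U$, and their concatenation is a continuous path in $U$ from $P$ to $Q$. Thus $U$ is path connected, hence connected.

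The main (and only) subtlety I anticipate is making sure the vertical segments $\alpha,\delta$ lie in $U$, which is why $M$ must be chosen larger than $a$ and $b$ as well as larger than the maximum of $f$ along $\gamma$; once $M$ is picked this way, monotonicity of the height along $\alpha$ and $\delta$ gives the needed inequality without any further work. Part (ii) is proved identically, choosing instead $M<\min\{a,b,\min_{t}f(\gamma(t))\}$ and using the horizontal segment at height $M$.
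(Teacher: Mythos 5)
Your proof is correct, but it takes a genuinely different route from the paper's. Both arguments build a three-piece path whose first and last pieces are vertical segments over $p$ and $q$, but the paper joins them by dropping down to the graph of $f$ and traversing ``any continuous path lying on the graph'' from one foot to the other, working in the closed region $\left\{ x_{n}\geq f\right\} $ over $\bar{W}$ and then passing to the open set via the claim that a set is connected if and only if its closure is connected. Your version instead goes \emph{over the top}: you pick a path $\gamma $ in $W$ itself (legitimate, since a connected open subset of Euclidean space is path connected), use compactness of $[0,1]$ to bound $f\circ \gamma $, and traverse the horizontal slice at a height $M$ exceeding $a$, $b$, and that bound. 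What this buys is robustness: you never need a path in $\bar{W}$ or on the graph of $f$ over $\bar{W}$ (the closure of a connected domain need not be path connected), you avoid the paper's closure step (of which only one implication is true: the closure of a connected set is connected, not conversely), and you in fact only use continuity of $f$ on $W$ rather than on $\bar{W}$. The verification that each of $\alpha $, $\beta $, $\delta $ stays in $U$ is complete as written, and the reduction of (ii) to (i) is fine.
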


\begin{proof}
We prove (i), the proof of (ii) being very similar. Let

$H=\left\{ (x_{1},...,x_{n-1},x_{n})\in \bar{W}\times \Re :x_{n}\geq
f(x_{1},...,x_{n-1})\right\} $. Suppose that $P_{1}=(x_{1,1},...,x_{n,1})\in
H$ and $P_{2}=(x_{1,2},...,x_{n,2})\in H$. Let

$S=\left\{ (x_{1},...,x_{n-1},x_{n})\in \bar{W}\times \Re
:x_{n}=f(x_{1},...,x_{n-1})\right\} =$ graph of $f$. Let $L_{1}$ be the
vertical line in $R^{n}$ thru $P_{1}$, so that $L_{1}$ has parametric
equations $x_{1}=x_{1,1},...,x_{n-1}(t)=x_{n-1,1},x_{n}(t)=t,-\infty
<t<\infty $. Let $L_{2}$ be the vertical line thru $P_{2}\ $in $R^{n}$, so
that $L_{2}$ has parametric equations $%
x_{1}=x_{1,2},...,x_{n-1}(t)=x_{n-1,2},x_{n}(t)=t,-\infty <t<\infty $. Let $%
Q_{1}=(x_{1,1},...,x_{n-1,1},u_{n,1})\in S$ be the point of intersection of $%
L_{1}$ with $S$ and let $Q_{2}=(x_{1,2},...,x_{n-1,2},u_{n,2})\in S$ be the
point of intersection of $L_{2}$ with $S$. If $%
x_{n,1}>f(x_{1,1},...,x_{n-1,1})$, let $\Gamma _{1}$ be the path consisting
of the vertical line segment from $P_{1}$ to $S$, so that $\Gamma _{1}$ has
parametric equations $%
x_{1}=x_{1,1},...,x_{n-1}(t)=x_{n-1,1},x_{n}(t)=tu_{n,1}+(1-t)x_{n,1},0\leq
t\leq 1$. If $x_{n,1}=f(x_{1,1},...,x_{n-1,1})$, skip $\Gamma _{1}$. Let $%
\Gamma _{2}$ be any continuous path lying on $S$ from $Q_{1}$ to $Q_{2}$. If 
$x_{n,2}>f(x_{1,2},...,x_{n-1,2})$, let $\Gamma _{3}$ be the path consisting
of the vertical line segment from $Q_{2}$ to $P_{2}$, so that $\Gamma _{3}$
has parametric equations $%
x_{1}=x_{1,2},...,x_{n-1}(t)=x_{n-1,2},x_{n}(t)=(1-t)u_{n,2}+tx_{n,2},0\leq
t\leq 1$. If $x_{n,2}=f(x_{1,2},...,x_{n-1,2})$, skip $\Gamma _{3}$. Then $%
\Gamma _{1}\cup \Gamma _{2}\cup \Gamma _{3}$ is a continuous path from $%
P_{1} $ to $P_{2}$ which lies entirely in $H$, which implies that $H$ is
connected. Since a set is connected if and only if its closure is connected,
it follows immediately that $\left\{ (x_{1},...,x_{n-1},x_{n})\in W\times
\Re :x_{n}>f(x_{1},...,x_{n-1})\right\} $ is also connected.
\end{proof}

In many of the lemmas below and in the proof of Theorem \ref{T1}, one must
look at the case where $P_{1}=(x_{1},y_{1})$ and $P_{2}=(x_{2},y_{2})$
satisfy the equation $%
x_{2}(1-x_{2}-y_{2})y_{1}^{2}-x_{1}(1-x_{1}-y_{1})y_{2}^{2}=0$. Useful,
then, is the following:

\begin{eqnarray}
\text{Let }J &=&x_{2}(1-x_{2}-y_{2})y_{1}^{2}-x_{1}(1-x_{1}-y_{1})y_{2}^{2}%
\text{,}  \notag \\
A_{i} &=&\sqrt{x_{i}(1-x_{i}-y_{i})},i=1,2\text{.}  \label{28} \\
\text{Then }J &=&0\iff \dfrac{y_{1}}{y_{2}}=\dfrac{A_{1}}{A_{2}}\text{.} 
\notag
\end{eqnarray}

We now define the following very important value of $t$ if $%
2x_{2}y_{1}-2x_{1}y_{2}+y_{2}-y_{1}\neq 0$:

\begin{equation}
t_{0}=\dfrac{x_{2}y_{1}-x_{1}y_{2}}{2x_{2}y_{1}-2x_{1}y_{2}+y_{2}-y_{1}}%
\text{.}  \label{t0}
\end{equation}

\begin{lemma}
\label{L17}Let $t_{0}$ be given by (\ref{t0}) and suppose that $%
x_{2}y_{1}-x_{1}y_{2}>0$ and\ $(1-x_{1})y_{2}-(1-x_{2})y_{1}\neq 0$. If $%
(1-x_{1})y_{2}-(1-x_{2})y_{1}>0$ or if $J=\allowbreak 0$, then $%
2x_{2}y_{1}-2x_{1}y_{2}+y_{2}-y_{1}>0$ and $0<t_{0}<1$.
\end{lemma}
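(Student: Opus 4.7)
The plan is to reduce both sub-cases to a single algebraic observation: if we write $N = x_2y_1 - x_1y_2$ for the numerator of $t_0$ and $D = 2x_2y_1 - 2x_1y_2 + y_2 - y_1$ for its denominator, then a direct calculation gives
\[
D - N \;=\; (x_2y_1 - x_1y_2) + (y_2 - y_1) \;=\; (1-x_1)y_2 - (1-x_2)y_1.
\]
So the quantity $(1-x_1)y_2 - (1-x_2)y_1$ appearing in the hypothesis is exactly $D - N$. Since we are told $N > 0$, showing $D - N > 0$ immediately gives $D > N > 0$, from which $D > 0$ and $0 < t_0 = N/D < 1$ follow at once. So the whole problem collapses to verifying $D - N > 0$ in each of the two cases.

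First I would handle the case $(1-x_1)y_2 - (1-x_2)y_1 > 0$. Here the conclusion $D - N > 0$ is literally the hypothesis, so the result is immediate via the identity above.

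Next I would handle the case $J = 0$. The key is to invoke Lemma \ref{L6} contrapositively: that lemma tells us that under $x_2y_1 - x_1y_2 > 0$, if either $(1-x_2)y_1 - (1-x_1)y_2 > 0$ or $x_2(1-y_1) - x_1(1-y_2) < 0$ holds, then $J > 0$. Hence $J = 0$ forces $(1-x_2)y_1 - (1-x_1)y_2 \le 0$, i.e.\ $(1-x_1)y_2 - (1-x_2)y_1 \ge 0$. Combining this with the standing hypothesis $(1-x_1)y_2 - (1-x_2)y_1 \neq 0$, I get strict positivity, and the previous case applies.

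The only potential obstacle is spotting the identity $D - N = (1-x_1)y_2 - (1-x_2)y_1$; once this is in hand the proof is essentially one line plus an appeal to Lemma \ref{L6}. No further estimates or positivity arguments from Lemma \ref{L12} are needed for this lemma.
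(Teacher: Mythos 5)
Your proof is correct and follows essentially the same route as the paper: the identity $2x_{2}y_{1}-2x_{1}y_{2}+y_{2}-y_{1}=(x_{2}y_{1}-x_{1}y_{2})+\bigl((1-x_{1})y_{2}-(1-x_{2})y_{1}\bigr)$ is exactly the paper's decomposition of the denominator, and the $J=0$ case is handled there too by the contrapositive of Lemma \ref{L6} together with the hypothesis $(1-x_{1})y_{2}-(1-x_{2})y_{1}\neq 0$. If anything, your write-up is slightly more explicit than the paper about where the $\neq 0$ assumption is used to upgrade $\geq 0$ to $>0$.
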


\begin{proof}
Suppose that $x_{2}y_{1}-x_{1}y_{2}>0$. If $(1-x_{1})y_{2}-(1-x_{2})y_{1}>0$%
, then $%
2x_{2}y_{1}-2x_{1}y_{2}+y_{2}-y_{1}=x_{2}y_{1}-x_{1}y_{2}+(1-x_{1})y_{2}-(1-x_{2})y_{1}>0 
$, and so $t_{0}>0$. Also, $1-t_{0}=\allowbreak \tfrac{%
(1-x_{1})y_{2}-(1-x_{2})y_{1}}{2x_{2}y_{1}-2x_{1}y_{2}+y_{2}-y_{1}}>0$. Now
suppose that $J=\allowbreak 0$. By Lemma \ref{L6}(i), $%
(1-x_{1})y_{2}-(1-x_{2})y_{1}>0$ and the conclusion follows by what we just
proved.
\end{proof}

For fixed $(x_{1},x_{2},y_{1},y_{2})$, we now define the following useful
polynomials in $t$:

\begin{eqnarray}
B(t) &=&y_{1}^{2}q_{2}(t)-y_{2}^{2}q_{1}(t)\text{ and}  \label{BC} \\
C(t) &=&y_{1}{\large (}x_{1}+(1-2x_{1})t{\large )}q_{2}(t)-y_{2}{\large (}%
x_{2}+(1-2x_{2})t{\large )}q_{1}(t)\text{.}  \notag
\end{eqnarray}

The proof of Theorem \ref{T1} would be somewhat simpler if one did not have
to allow for the possibility that $B(t)=C(t)=0$ can occur for the same value
of $0<t<1$. The purpose of the following lemmas is to give necessary and
sufficient conditions for $B(t)=C(t)=0$.

Using a computer algebra system, one has the following rational function\
identities, where $t_{0}$ is given by (\ref{t0}). 
\begin{gather}
\left( 2x_{2}y_{1}-2x_{1}y_{2}+y_{2}-y_{1}\right) ^{2}B(t_{0})=\allowbreak
\label{BCt0} \\
4\allowbreak J\left( x_{2}y_{1}-x_{1}y_{2}\right) {\large (}%
(1-x_{2})y_{1}-(1-x_{1})y_{2}{\large )}\text{, and}  \notag \\
\left( 2x_{2}y_{1}-2x_{1}y_{2}+y_{2}-y_{1}\right) ^{3}C(t_{0})=  \notag \\
-4J\left( x_{2}-x_{1}\right) \left( x_{2}y_{1}-x_{1}y_{2}\right) {\large (}%
(1-x_{2})y_{1}-(1-x_{1})y_{2}{\large )}\text{.}  \notag
\end{gather}

\begin{lemma}
\label{L13}Suppose that $x_{2}y_{1}-x_{1}y_{2}\neq 0\neq
(1-x_{2})y_{1}-(1-x_{1})y_{2}$ and let $t_{0}$ be given by (\ref{t0}).
Consider the system of equations in $t$,%
\begin{equation}
B(t)=0,C(t)=0\text{.}  \label{4}
\end{equation}%
If (\ref{4}) has a solution, then $2x_{2}y_{1}-2x_{1}y_{2}+y_{2}-y_{1}\neq 0$%
, that solution is unique and\textbf{\ }is given by $t=t_{0}$, and $%
J=\allowbreak 0$.
\end{lemma}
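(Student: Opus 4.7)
The plan is to reduce the system $B(t)=C(t)=0$ to a single linear equation in $t$ by taking two natural combinations of $B$ and $C$. Write $L_i(t) = x_i + (1-2x_i)t$, so that $C(t) = y_1 L_1(t)q_2(t) - y_2 L_2(t)q_1(t)$. Set $N = x_2y_1 - x_1y_2$ and $M = 2x_2y_1 - 2x_1y_2 + y_2 - y_1$, so $t_0 = N/M$ whenever $M\neq 0$. A direct (degree-one) computation gives the crucial identity
\begin{equation*}
y_2 L_1(t) - y_1 L_2(t) = Mt - N.
\end{equation*}

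Next, I will verify by expanding the definitions that
\begin{equation*}
y_1 C(t) - L_1(t)\, B(t) = y_2\, q_1(t)\,(Mt-N), \qquad
y_2 C(t) - L_2(t)\, B(t) = y_1\, q_2(t)\,(Mt-N).
\end{equation*}
Both identities are immediate once one plugs in $B(t) = y_1^2 q_2(t) - y_2^2 q_1(t)$: in each case the two copies of $y_i^2 L_i q_{3-i}$ cancel, and the remaining terms combine into a common factor of $y_2 L_1 - y_1 L_2 = Mt - N$.

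Now suppose $t^{*}\in\Re$ is a common zero of $B$ and $C$. Either of the above identities yields $y_j\, q_j(t^{*})(Mt^{*} - N)=0$ for $j=1,2$. Since $q_j(t^{*}) > 0$ by (\ref{q}) and $y_j>0$ because $P_j\in\operatorname{int}(T)$, we must have $Mt^{*}=N$. The hypothesis $N = x_2y_1 - x_1y_2 \neq 0$ then forces $M\neq 0$; in particular $2x_2y_1 - 2x_1y_2 + y_2 - y_1 \neq 0$, so $t_0$ is well defined, and $t^{*} = N/M = t_0$ is the unique possibility. This establishes the first two conclusions of the lemma. For the final claim, substitute $t = t_0$ into the first identity of (\ref{BCt0}):
\begin{equation*}
M^{2}\, B(t_0) = 4J\,(x_2y_1 - x_1y_2)\,\bigl((1-x_2)y_1 - (1-x_1)y_2\bigr).
\end{equation*}
Since $B(t_0) = 0$ and the two factors $x_2y_1 - x_1y_2$ and $(1-x_2)y_1 - (1-x_1)y_2$ are both nonzero by hypothesis, we conclude $J=0$.

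The only real labor is verifying the two combination identities; everything else is bookkeeping. That step is the main (but entirely mechanical) obstacle, and it works because the mismatch between the degree-$3$ polynomial $C(t)$ and the degree-$2$ polynomial $B(t)$ is exactly captured by the linear factor $Mt - N$, after canceling the dominant $q_j$-contributions.
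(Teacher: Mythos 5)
Your proof is correct and follows essentially the same route as the paper: both reduce the system $B(t)=C(t)=0$ to the linear equation $Mt-N=0$ (forcing $M\neq 0$ and $t=t_0$) and then invoke the identity (\ref{BCt0}) to conclude $J=0$. Your division-free elimination via the identities $y_1C(t)-L_1(t)B(t)=y_2q_1(t)(Mt-N)$ and $y_2C(t)-L_2(t)B(t)=y_1q_2(t)(Mt-N)$, which I have checked, is a slightly cleaner execution, since it sidesteps the paper's preliminary case analysis establishing $x_i+(1-2x_i)t\neq 0$ before it can divide and equate the ratios $q_1(t)/q_2(t)$.
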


\begin{proof}
If $x_{2}+(1-2x_{2})t=0$, then $C(t)=0$ implies that $x_{1}+(1-2x_{1})t=0$
since $q_{2}$ has no real roots. The system of equations 
\begin{eqnarray*}
x_{1}+(1-2x_{1})t &=&0 \\
x_{2}+(1-2x_{2})t &=&0
\end{eqnarray*}%
yields $(2x_{2}-2x_{1})t=x_{2}-x_{1}$, so $t=\tfrac{1}{2}$ since $x_{1}\neq
x_{2}$. Substituting $t=\tfrac{1}{2}$ into either equation above gives $%
\tfrac{1}{2}=0$. Thus we may assume that $x_{2}+(1-2x_{2})t\neq 0$ if $t$ is
a solution of (\ref{4}). Similarly, we may assume that $x_{1}+(1-2x_{1})t%
\neq 0$ if $t$ is a solution of (\ref{4}). $B(t)=0$ implies that $\tfrac{%
y_{1}^{2}}{y_{2}^{2}}=\tfrac{q_{1}(t)}{q_{2}(t)}$ and $C(t)=0$ implies that $%
\tfrac{y_{1}{\large (}x_{1}+(1-2x_{1})t{\large )}}{y_{2}{\large (}%
x_{2}+(1-2x_{2})t{\large )}}=\tfrac{q_{1}(t)}{q_{2}(t)}$, and thus $\tfrac{%
y_{1}^{2}}{y_{2}^{2}}=$ $\tfrac{y_{1}{\large (}x_{1}+(1-2x_{1})t{\large )}}{%
y_{2}{\large (}x_{2}+(1-2x_{2})t{\large )}}$. That yields $\tfrac{y_{1}}{%
y_{2}}=\tfrac{x_{1}+(1-2x_{1})t}{x_{2}+(1-2x_{2})t}$, so $y_{1}{\large (}%
x_{2}+(1-2x_{2})t{\large )}-y_{2}{\large (}x_{1}+(1-2x_{1})t{\large )}=0$,
which implies that $\left( y_{1}-y_{2}+2x_{1}y_{2}-2x_{2}y_{1}\right)
t+x_{2}y_{1}-x_{1}y_{2}=0$; Thus $2x_{1}y_{2}-2x_{2}y_{1}+y_{1}-y_{2}\neq 0$%
, and solving for $t$ then yields $t=t_{0}$. (\ref{4}) then yields

$B(t_{0})=C(t_{0})=0$. Since $x_{2}y_{1}-x_{1}y_{2}\neq 0$ and $%
(1-x_{2})y_{1}-(1-x_{1})y_{2}\neq 0$ by assumption, by (\ref{BCt0}) we have $%
J=0$.
\end{proof}

\begin{lemma}
\label{L8}Suppose that $(x_{1},y_{1})\in G$ and for fixed $t,0<t<1$, let $%
g(w)=q_{1}(t)w^{2}+2ty_{1}{\large (}(2x_{1}-1)t-x_{1}{\large )}\allowbreak
w+y_{1}^{2}t^{2}$. Then $g$ has two distinct roots $w_{1},w_{2}\in (0,1]$.
\end{lemma}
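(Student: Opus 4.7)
I treat $g$ as a quadratic in $w$ with leading coefficient $q_{1}(t)>0$ (by (\ref{q})) and proceed in three steps: show the two roots are real and distinct, then that both are positive, and finally that both are at most $1$.

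\emph{Distinct real roots.} The discriminant of $g$ equals
\[
\Delta \;=\; 4t^{2}y_{1}^{2}\bigl[\bigl((2x_{1}-1)t-x_{1}\bigr)^{2}-q_{1}(t)\bigr].
\]
Expanding $\bigl((2x_{1}-1)t-x_{1}\bigr)^{2}$ and subtracting $q_{1}(t)=(x_{1}-t)^{2}+4x_{1}y_{1}t(1-t)$, the mess should collapse to $4x_{1}(1-x_{1}-y_{1})t(1-t)$, giving
\[
\Delta \;=\; 16\,x_{1}y_{1}^{2}(1-x_{1}-y_{1})\,t^{3}(1-t).
\]
Since $(x_{1},y_{1})\in G$ and $0<t<1$, every factor is strictly positive, so $g$ has two distinct real roots $w_{1},w_{2}$.

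\emph{Positive roots.} By Vieta, $w_{1}w_{2}=y_{1}^{2}t^{2}/q_{1}(t)>0$, so the two roots share a sign. Also
\[
w_{1}+w_{2} \;=\; \frac{2ty_{1}\bigl(x_{1}(1-t)+t(1-x_{1})\bigr)}{q_{1}(t)} \;>\; 0,
\]
since each term in the parenthesis is positive when $x_{1},t\in(0,1)$. Hence both roots are positive.

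\emph{Roots bounded by $1$.} Direct substitution should yield the perfect square
\[
g(1) \;=\; \bigl(x_{1}-t(1-y_{1})\bigr)^{2} \;\geq\; 0.
\]
Because $g(1)=q_{1}(t)(1-w_{1})(1-w_{2})$ and $q_{1}(t)>0$, this forces $(1-w_{1})(1-w_{2})\geq 0$, so either both roots are $\leq 1$ or both are $\geq 1$. To exclude the second possibility I show $w_{1}+w_{2}<2$, equivalently
\[
h(y_{1}) \;:=\; q_{1}(t)-ty_{1}\bigl(x_{1}+t-2x_{1}t\bigr) \;>\; 0.
\]
Keeping $x_{1},t$ fixed, $h$ is a linear function of $y_{1}$. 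At $y_{1}=0$ it equals $(x_{1}-t)^{2}\geq 0$; at the boundary value $y_{1}=1-x_{1}$ a short computation factors it as $x_{1}(1-t)\bigl[x_{1}(1-2t)+t\bigr]$, and the bracket is positive because when $t\geq 1/2$ we have $x_{1}(1-2t)+t\geq -(2t-1)+t=1-t>0$ (using $x_{1}<1$), while for $t<1/2$ both summands are positive. A linear function that is $\geq 0$ at one endpoint of an interval and $>0$ at the other is strictly positive in between; thus $h(y_{1})>0$ on the admissible range $y_{1}\in(0,1-x_{1})$. Therefore $w_{1}+w_{2}<2$, which combined with $(1-w_{1})(1-w_{2})\geq 0$ forces both roots into $(0,1]$.

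\emph{Anticipated obstacle.} Steps 1 and 2 are routine Vieta/discriminant bookkeeping once one notices the simplifications. The only delicate point is showing $h(y_{1})>0$ in Step 3: the expression does not factor directly as a product of positive quantities, so the argument rests on exploiting its linearity in $y_{1}$ and checking separately the two endpoints of the admissible strip, at which the algebra collapses to a transparent form.
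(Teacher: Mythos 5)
Your proof is correct, and it takes a genuinely different route from the paper's. The paper establishes two distinct real roots by evaluating $g$ at its vertex (equivalent to your discriminant computation, since $g(w_{0})=-\Delta/(4q_{1}(t))$), but then it splits into the cases $t\neq x_{1}/(1-y_{1})$ and $t=x_{1}/(1-y_{1})$: in the first it runs a topological argument, showing that the set of $(x_{1},y_{1})\in G$ for which both roots lie in $(0,1)$ is open, closed, and nonempty in the connected set $G$ (using $g(0)>0$, $g(1)>0$, and the impossibility of a double root); in the second it factors $g$ explicitly and exhibits the roots $1$ and $y_{1}/(4-4x_{1}-3y_{1})$. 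You instead do direct Vieta bookkeeping: $\Delta=16x_{1}y_{1}^{2}(1-x_{1}-y_{1})t^{3}(1-t)>0$ for distinctness, positive sum and product for positivity, and $g(1)=\bigl(x_{1}-t(1-y_{1})\bigr)^{2}\geq 0$ together with $w_{1}+w_{2}<2$ to cap the roots at $1$; I checked the three algebraic identities you flagged as "should collapse" and they all hold, and your linear-in-$y_{1}$ endpoint argument for $h(y_{1})>0$ is sound (an affine function that is $\geq 0$ at $y_{1}=0$ and $>0$ at $y_{1}=1-x_{1}$ is strictly positive on the open interval). Your approach buys a uniform, elementary, and fully quantitative argument that handles the boundary case $g(1)=0$ without a separate case split and avoids the connectedness machinery; the paper's approach avoids the $w_{1}+w_{2}<2$ estimate (the one genuinely delicate inequality in your route) at the cost of a less self-contained continuity argument, which it reuses elsewhere (e.g.\ in Lemma \ref{L9}), so it is more in keeping with the paper's overall style.
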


\begin{proof}
Suppose that $0<t<1$; $g^{\prime }(w_{0})=0$, where $w_{0}=\tfrac{ty_{1}%
{\large (}x_{1}-(2x_{1}-1)t{\large )}}{q_{1}(t)}$ and $g(w_{0})=-\tfrac{%
4t^{3}x_{1}y_{1}^{2}\left( 1-x_{1}-y_{1}\right) (1-t)}{%
(t-x_{1})^{2}+4x_{1}y_{1}t(1-t)}<0$ since $(x_{1},y_{1})\in G$.

Case 1: $t\neq \tfrac{x_{1}}{1-y_{1}}$

Let $V=\left\{ (x_{1},y_{1})\in G:g\ \text{has two distinct roots\ in }%
(0,1)\right\} $; It follows easily that $V$ is an open subset of $G$ since $%
g(0)=y_{1}^{2}t^{2}>0$ and $g(1)={\large (}x_{1}-(1-y_{1})t{\large )}^{2}>0$%
. Since $g$ cannot have a double root and $g(0),g(1)>0$, $V$ is a closed
subset of $G$; Now let $x_{1}=0$ and $y_{1}=\tfrac{1}{2}$, which implies
that $g(w)=\allowbreak \tfrac{1}{4}t^{2}\left( 2w-1\right) ^{2}$ and thus $g$
has a double root at $w=\allowbreak \tfrac{1}{2}$; Then for $%
(x_{1},y_{1})\in G$ with $(x_{1},y_{1})$ close to $\left( 0,\tfrac{1}{2}%
\right) $, $g$ has two roots in $(0,1)$; They must be distinct as argued
above, which proves that $V$ is nonempty. Since $G$ is connected, $V=G$.

Case 2: $t=\tfrac{x_{1}}{1-y_{1}}$

Note that $\tfrac{x_{1}}{1-y_{1}}\in (0,1)$ since $(x_{1},y_{1})\in G$; $%
g(w)=\allowbreak x_{1}^{2}y_{1}\left( 1-w\right) \tfrac{\left(
4x_{1}+3y_{1}-4\right) w+y_{1}}{\left( 1-y_{1}\right) ^{2}}=0$ implies that $%
w=1$ or $w=\tfrac{y_{1}}{4-4x_{1}-3y_{1}}$; Also, $%
4-4x_{1}-3y_{1}>4-4x_{1}-4y_{1}=4(1-x_{1}-y_{1})>0$, which implies that $%
\tfrac{y_{1}}{4-4x_{1}-3y_{1}}>0$; Then $\tfrac{y_{1}}{4-4x_{1}-3y_{1}}%
<1\iff 4-4x_{1}-4y_{1}>0$ and so $\tfrac{y_{1}}{4-4x_{1}-3y_{1}}<1$.
\end{proof}

The next lemma shows that with certain assumptions on $P_{1}$ and $P_{2}$,
the two equations in (\ref{3}) are multiples of one another when $t=t_{0}$.

\begin{lemma}
\label{L11}Let $t_{0}$ be given by (\ref{t0}). Suppose that $%
x_{2}y_{1}-x_{1}y_{2}>0$ and\ $(1-x_{2})y_{1}-(1-x_{1})y_{2}\neq 0$. If $%
J=\allowbreak 0$, then

$q_{2}(t_{0})w^{2}+2y_{2}t_{0}{\large (}(2x_{2}-1)t_{0}-x_{2})\allowbreak 
{\large )}w+t_{0}^{2}y_{2}^{2}=\dfrac{y_{2}^{2}}{y_{1}^{2}}{\large (}%
q_{1}(t_{0})w^{2}+2y_{1}t_{0}{\large (}(2x_{1}-1)t_{0}-x_{1}{\large )}%
\allowbreak w+t_{0}^{2}y_{1}^{2}{\large )}$.
\end{lemma}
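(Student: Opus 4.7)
The plan is to show the two quadratics in $w$ (evaluated at $t = t_0$) are proportional with ratio $y_2^2/y_1^2$ coefficient-by-coefficient. The constant terms are both $t_0^2 y_2^2$, matching trivially. The $w^2$ coefficients match exactly when $B(t_0) = 0$, and, assuming this, the $w^1$ coefficients match exactly when $C(t_0) = 0$. Hence the entire lemma reduces to verifying $B(t_0) = 0$ and $C(t_0) = 0$.

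To extract those two vanishings, I would invoke Lemma \ref{L17}: its $J = 0$ clause applies because $x_2 y_1 - x_1 y_2 > 0$, $(1 - x_2) y_1 - (1 - x_1) y_2 \neq 0$, and $J = 0$ are exactly the hypotheses at hand. That gives $2 x_2 y_1 - 2 x_1 y_2 + y_2 - y_1 > 0$ and $0 < t_0 < 1$, so $t_0$ is well-defined. Feeding $J = 0$ into the rational-function identities (\ref{BCt0}) then makes both right-hand sides vanish, and since the left-hand side factors $(2 x_2 y_1 - 2 x_1 y_2 + y_2 - y_1)^2$ and $(2 x_2 y_1 - 2 x_1 y_2 + y_2 - y_1)^3$ are positive, I conclude $B(t_0) = 0$ and $C(t_0) = 0$.

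The remaining bookkeeping is straightforward once one notes that $q_1(t_0), q_2(t_0) > 0$ by (\ref{q}), that $y_1, y_2, t_0 > 0$, and hence every divisor below is nonzero. The relation $B(t_0) = 0$ rearranges to $q_2(t_0)/q_1(t_0) = y_2^2/y_1^2$, which is exactly the matching of $w^2$-coefficients. For the $w$-coefficient, after cancelling the common factor $2 y_1 y_2 t_0$, the desired identity reduces to $y_1 \bigl( x_2 + (1 - 2 x_2) t_0 \bigr) = y_2 \bigl( x_1 + (1 - 2 x_1) t_0 \bigr)$; substituting the ratio $q_2(t_0)/q_1(t_0) = y_2^2/y_1^2$ into $C(t_0) = 0$ gives exactly this identity.

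The only real obstacle is recognising that the identities (\ref{BCt0}) are tailored so that the single algebraic condition $J = 0$ simultaneously kills both $B(t_0)$ and $C(t_0)$. Once those two vanishings are in hand, every remaining step is mechanical: Lemma \ref{L17} secures the denominator defining $t_0$, (\ref{q}) secures the positivity of the $q_i(t_0)$, and the coefficient comparison is elementary algebra.
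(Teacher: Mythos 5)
Your proposal is correct, and the reduction to a coefficient-by-coefficient comparison is the same skeleton the paper uses; the difference lies in which identities you feed into it. The paper disposes of the $w$-coefficient without any appeal to $J=0$: the required identity $y_{1}{\large (}x_{2}+(1-2x_{2})t_{0}{\large )}=y_{2}{\large (}x_{1}+(1-2x_{1})t_{0}{\large )}$ is just a restatement of the defining equation (\ref{t0}) for $t_{0}$, so only the $w^{2}$-coefficient genuinely needs $J=0$; for that the paper substitutes $\tfrac{y_{1}^{2}}{y_{2}^{2}}=\tfrac{x_{1}(1-x_{1}-y_{1})}{x_{2}(1-x_{2}-y_{2})}$ (from (\ref{28})) and invokes the identity $x_{2}(1-x_{2}-y_{2})q_{1}(t_{0})-x_{1}(1-x_{1}-y_{1})q_{2}(t_{0})=\tfrac{(x_{2}-x_{1})^{2}J}{(2x_{2}y_{1}-2x_{1}y_{2}+y_{2}-y_{1})^{2}}$. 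You instead route both coefficients through $B(t_{0})=C(t_{0})=0$, obtained from (\ref{BCt0}) once Lemma \ref{L17} guarantees the prefactor $2x_{2}y_{1}-2x_{1}y_{2}+y_{2}-y_{1}$ is positive; this is sound (and arguably economical, since (\ref{BCt0}) is already on record for Lemma \ref{L13}), at the cost of making the linear coefficient appear to depend on $J=0$ when it is in fact automatic from (\ref{t0}). Both routes ultimately rest on a computer-algebra identity, and your bookkeeping with (\ref{q}) and the positivity of $y_{1},y_{2},t_{0}$ to justify the divisions is in order.
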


\begin{proof}
By Lemma \ref{L17}, $2x_{2}y_{1}-2x_{1}y_{2}+y_{2}-y_{1}\neq 0$. Now for any 
$y_{2},y_{1}^{2}t_{0}^{2}y_{2}^{2}=y_{2}^{2}t_{0}^{2}y_{1}^{2}$ is trivial
and it also follows easily that $y_{1}^{2}y_{2}t_{0}{\large (}%
(2x_{2}-1)t_{0}-x_{2})=y_{2}^{2}y_{1}t_{0}{\large (}(2x_{1}-1)t_{0}-x_{1}%
{\large )}$.

$J=\allowbreak 0$ implies that $\tfrac{y_{1}^{2}}{y_{2}^{2}}=\tfrac{%
x_{1}(1-x_{1}-y_{1})}{x_{2}(1-x_{2}-y_{2})}$, so $%
y_{1}^{2}q_{2}(t_{0})=y_{2}^{2}q_{1}(t_{0})\iff \tfrac{x_{1}(1-x_{1}-y_{1})}{%
x_{2}(1-x_{2}-y_{2})}=\tfrac{q_{1}(t_{0})}{q_{2}(t_{0})}\iff
x_{2}(1-x_{2}-y_{2})q_{1}(t_{0})-x_{1}(1-x_{1}-y_{1})q_{2}(t_{0})=0\iff
\allowbreak \tfrac{\left( x_{2}-x_{1}\right) ^{2}J}{\left(
2x_{2}y_{1}-2x_{1}y_{2}+y_{2}-y_{1}\right) ^{2}}=\allowbreak 0$.
\end{proof}

For fixed $(x_{1},x_{2},y_{1},y_{2})$, we now introduce the following
quadratic polynomials in $t$, which will be critical for the proof of
Theorem \ref{T1}. 
\begin{eqnarray}
R(t) &=&{\large (}4y_{1}y_{2}\left(
x_{1}y_{2}+x_{2}y_{1}+2x_{1}x_{2}-x_{2}-x_{1}\right)
-(y_{2}-y_{1})^{2}+8y_{1}y_{2}A_{1}A_{2}\allowbreak {\large )}t^{2}  \notag
\\
&&+2{\large (}\allowbreak x_{2}y_{1}^{2}+x_{1}y_{2}^{2}-y_{1}y_{2}\left(
2x_{1}y_{2}+2x_{2}y_{1}+4x_{1}x_{2}-x_{1}-x_{2}\right)
-4y_{1}y_{2}A_{1}A_{2}\allowbreak {\large )}t  \notag \\
&&-\left( x_{2}y_{1}-x_{1}y_{2}\right) ^{2},j=1,2\text{, }  \label{R}
\end{eqnarray}

and 
\begin{eqnarray}
S(t) &=&{\large (}4y_{1}y_{2}\left(
x_{1}y_{2}+x_{2}y_{1}+2x_{1}x_{2}-x_{2}-x_{1}\right)
-(y_{2}-y_{1})^{2}-8y_{1}y_{2}A_{1}A_{2}\allowbreak {\large )}t^{2}  \notag
\\
&&+2{\large (}\allowbreak x_{2}y_{1}^{2}+x_{1}y_{2}^{2}-y_{1}y_{2}\left(
2x_{1}y_{2}+2x_{2}y_{1}+4x_{1}x_{2}-x_{1}-x_{2}\right)
+4y_{1}y_{2}A_{1}A_{2}\allowbreak {\large )}t  \notag \\
&&-\left( x_{2}y_{1}-x_{1}y_{2}\right) ^{2},j=1,2\text{.}  \label{S}
\end{eqnarray}

Since $R(t)-S(t)=-16A_{1}A_{2}y_{1}y_{2}t(1-t)$, $R$ and $S$ cannot take the
same value if $t\in (0,1)$. In particular, 
\begin{equation}
R(t_{1})=0=S(t_{2}),0<t_{1},t_{2}<1\Rightarrow t_{1}\neq t_{2}\text{.}
\label{ReqS}
\end{equation}

Note that 
\begin{eqnarray}
R(0) &=&S(0)=-\left( x_{2}y_{1}-x_{1}y_{2}\right) ^{2}  \label{endptsRS} \\
R(1) &=&S(1)=\allowbreak -{\large (}(1-x_{2})y_{1}-(1-x_{1})y_{2}{\large )}%
^{2}\text{.}  \notag
\end{eqnarray}

\begin{lemma}
\label{L16}: If $R(0)=0$, then $R^{\prime }(0)\neq 0$ and if $R(1)=0$, then $%
R^{\prime }(1)\neq 0$.
\end{lemma}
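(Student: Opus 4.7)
The plan is to leverage (\ref{endptsRS}), which gives $R(0) = -(x_2 y_1 - x_1 y_2)^2$ and $R(1) = -\bigl((1-x_2)y_1 - (1-x_1)y_2\bigr)^2$. Thus $R(0) = 0$ is equivalent to $x_2 y_1 = x_1 y_2$, and $R(1) = 0$ is equivalent to $(1-x_2) y_1 = (1-x_1) y_2$. In each case I compute the relevant derivative under this side constraint and show it is strictly signed via an AM--GM argument, with strictness coming from $x_1 \neq x_2$.

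For $R(0) = 0$, set $\mu := y_1/x_1 = y_2/x_2 > 0$, so $y_i = \mu x_i$. The interior condition $x_i + y_i < 1$ becomes $(1+\mu) x_i < 1$, giving $A_i^2 = x_i(1 - x_i - y_i) = x_i\bigl(1 - (1+\mu) x_i\bigr) > 0$ and hence $A_1 A_2 = \sqrt{u v}$, where $u := x_1\bigl(1 - (1+\mu) x_2\bigr)$ and $v := x_2\bigl(1 - (1+\mu) x_1\bigr)$ are both positive. Substituting $y_i = \mu x_i$ into the coefficient of $t$ in the definition of $R$, I expect the expression to collapse cleanly to
\begin{equation*}
R'(0) = 4 \mu^2 x_1 x_2 \bigl[(u+v) - 2\sqrt{u v}\bigr].
\end{equation*}
The bracket is nonnegative by AM--GM and strictly positive unless $u = v$; but $u = v \iff x_1 = x_2$, which is excluded by (\ref{1}). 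Hence $R'(0) > 0$.

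For $R(1) = 0$, I exploit the identity $R'(1) = 2\alpha + \beta$ together with $R(1) = \alpha + \beta + \gamma = 0$ to reduce to $R'(1) = \alpha - \gamma$, where $\alpha, \beta, \gamma$ are the coefficients of $t^2, t, 1$ in $R$. This is convenient because $\gamma = R(0)$ is already in closed form and $\alpha$ is simply the leading coefficient. Now parametrize by $\lambda := y_1/(1-x_1) = y_2/(1-x_2)$, so $y_i = \lambda(1-x_i)$ with $0 < \lambda < 1$ (the upper bound forced by $x_i + y_i < 1$). Then $A_i^2 = x_i(1-x_i)(1-\lambda)$ and $A_1 A_2 = (1-\lambda)\sqrt{x_1 x_2 (1-x_1)(1-x_2)}$. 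Substituting, I expect the algebra to yield
\begin{equation*}
R'(1) = 4 \lambda^2 (1-x_1)(1-x_2)(1-\lambda) \bigl[2\sqrt{u v} - (u+v)\bigr],
\end{equation*}
where now $u := x_1(1-x_2)$ and $v := x_2(1-x_1)$. As before, $2\sqrt{uv} \le u+v$ with equality iff $u = v \iff x_1 = x_2$, so $R'(1) < 0$.

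The main obstacle is the bookkeeping in the two substitutions: the coefficient of $t$ in the first case, and the quantity $\alpha - \gamma$ in the second, each involve several non-radical terms together with the radical contribution $4 y_1 y_2 A_1 A_2$. The payoff of the chosen parametrizations $y_i = \mu x_i$ and $y_i = \lambda(1-x_i)$ is that after substitution the non-radical terms fuse into exactly $u + v$ times the same positive prefactor that multiplies $2\sqrt{u v}$, so a clean AM--GM factorization emerges. Once this factorization is in hand, the conclusion is immediate from strict AM--GM together with $x_1 \ne x_2$.
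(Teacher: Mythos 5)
Your proposal is correct; I checked both claimed collapsed forms and they hold exactly as written: with $y_i=\mu x_i$ the $t$-coefficient of $R$ becomes $4\mu^2x_1x_2\bigl[(u+v)-2\sqrt{uv}\bigr]$, and with $y_i=\lambda(1-x_i)$ the terms $-(y_2-y_1)^2$ and $-(x_2y_1-x_1y_2)^2$ cancel in $\alpha-\gamma$, leaving the stated factorization. At bottom your computation is the same as the paper's: the paper also substitutes the constraint ($y_2=x_2y_1/x_1$, resp.\ $y_2=(1-x_2)y_1/(1-x_1)$), writes $R'(0)$ as a positive prefactor times a rational term plus a radical term, and observes that the difference of their squares is $(x_2-x_1)^2\neq 0$ --- which is precisely your identity $(u+v)^2-4uv=(u-v)^2=(x_1-x_2)^2$. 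Where you genuinely diverge is in the packaging and in what you get out. The paper argues by contradiction and obtains only $R'\neq 0$, deferring the signs $R'(0)>0$, $R'(1)<0$ to Lemma \ref{L9}, where they are established for \emph{all} admissible $(x_1,x_2,y_1,y_2)$ by a separate open-closed-connected argument; your AM--GM factorization delivers the signs directly, but only on the constraint sets $R(0)=0$ and $R(1)=0$, which is all Lemma \ref{L16} is used for (in the proof of Lemma \ref{L9} one only needs to rule out $R'(0)=0$ when $R(0)=0$). Your reduction $R'(1)=\alpha-\gamma$ via $R(1)=\alpha+\beta+\gamma=0$ is also a tidier route than the paper's direct evaluation of $R'(1)$, since $\gamma=R(0)$ and $\alpha$ are both available in closed form. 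In short: same decisive algebraic identity, but a cleaner parametrization and a factorization that makes the strictness transparent rather than extracted by contradiction.
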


\begin{proof}
If $R(0)=0$, then $y_{2}=\tfrac{x_{2}y_{1}}{x_{1}}$, which implies that $%
x_{1}R^{\prime }(0)=\allowbreak
-4x_{2}y_{1}^{2}(-x_{1}-x_{2}+2x_{2}y_{1}+2x_{1}x_{2}+2\sqrt{1-x_{1}-y_{1}}%
\sqrt{-x_{2}(-x_{1}+x_{1}x_{2}+x_{2}y_{1})})$, so $R^{\prime }(0)=0$ implies
that

$%
(2x_{2}y_{1}+2x_{1}x_{2}-x_{1}-x_{2})^{2}-4(1-x_{1}-y_{1})(-x_{2}(-x_{1}+x_{1}x_{2}+x_{2}y_{1}))=\allowbreak \left( x_{2}-x_{1}\right) ^{2}=0 
$, which contradicts (\ref{1}). If $R(1)=0$, then $y_{2}=\tfrac{%
(1-x_{2})y_{1}}{1-x_{1}}$, which implies that

$\left( x_{1}-1\right) ^{2}R^{\prime }(1)=\allowbreak 4y_{1}^{2}\left(
x_{2}-1\right) \times $

$\left( \allowbreak \left( 1-x_{1}-y_{1}\right) \left( \allowbreak
x_{1}+x_{2}-2x_{1}x_{2}\right) +2(x_{1}-1)A_{1}\sqrt{\left(
1-x_{1}-y_{1}\right) x_{2}\tfrac{x_{2}-1}{x_{1}-1}}\right) $, so $R^{\prime
}(1)=0$ implies that

${\large (}\left( 1-x_{1}-y_{1}\right) \left( \allowbreak
x_{1}+x_{2}-2x_{1}x_{2}\right) {\large )}^{2}$

$-4\left( x_{1}-1\right) ^{2}x_{1}\left( 1-x_{1}-y_{1}\right) \left(
1-x_{1}-y_{1}\right) x_{2}\tfrac{x_{2}-1}{x_{1}-1}=\allowbreak \left(
x_{1}+y_{1}-1\right) ^{2}\left( x_{2}-x_{1}\right) ^{2}=0$, which again
contradicts (\ref{1}).
\end{proof}

\begin{lemma}
\label{L18}Let $D_{R}$ and $D_{S}$ denote the discriminants of $R$ and of $S$%
, respectively. Then $D_{R}\geq 0$ and $D_{S}>0$, and thus $R$ and $S$ each
have real zeros.
\end{lemma}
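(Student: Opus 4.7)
The plan is to reduce $D_R$ and $D_S$ to products of perfect squares with a positive coefficient. Write $R(t) = (P+2K) t^2 + (Q-2K) t - M$ and $S(t) = (P-2K) t^2 + (Q+2K) t - M$, where $K := 4 y_1 y_2 A_1 A_2 > 0$, $M := (x_2 y_1 - x_1 y_2)^2 \geq 0$, and $P, Q$ denote the parts of the coefficients of $t^2$ and $t$ free of $A_1 A_2$ (as read off from (\ref{R}) and (\ref{S})). Then
\[
D_R = (Q-2K)^2 + 4M(P+2K), \qquad D_S = (Q+2K)^2 + 4M(P-2K).
\]

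Introduce $\mu_1 := x_2 y_1 - x_1 y_2$, $\mu_2 := (1-x_2) y_1 - (1-x_1) y_2$, and $\nu := 2\mu_1 + (y_2 - y_1)$. The first key step is the polynomial identity
\[
\nu^2 + P = -4\bigl(y_1^2 A_2^2 + y_2^2 A_1^2\bigr),
\]
verified by a routine expansion using $A_i^2 = x_i(1 - x_i - y_i)$ together with $\mu_1^2 = x_2^2 y_1^2 - 2 x_1 x_2 y_1 y_2 + x_1^2 y_2^2$. Completing the square gives $\nu^2 + P + 2K = -4(y_1 A_2 - y_2 A_1)^2$ and $\nu^2 + P - 2K = -4(y_1 A_2 + y_2 A_1)^2$. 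Combined with the easy companion identity $Q \pm 2K = 4(y_1 A_2 \pm y_2 A_1)^2 + 2 \mu_1 \nu$, substituting into the formulas for $D_R$ and $D_S$ produces a clean telescoping (via $\mu_1^2 \nu^2 = M \nu^2$ and $\mu_1 \nu - M = -\mu_1 \mu_2$), yielding
\[
D_R = 16 (y_1 A_2 - y_2 A_1)^2 \bigl[(y_1 A_2 - y_2 A_1)^2 - \mu_1 \mu_2\bigr],
\]
\[
D_S = 16 (y_1 A_2 + y_2 A_1)^2 \bigl[(y_1 A_2 + y_2 A_1)^2 - \mu_1 \mu_2\bigr].
\]

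The final step is to recognize that each bracketed factor is itself a positive multiple of a perfect square. Substituting $1 - x_i - y_i = A_i^2 / x_i$ in the expansion of $(y_1 A_2 \pm y_2 A_1)^2 - \mu_1 \mu_2$ produces
\[
(y_1 A_2 \pm y_2 A_1)^2 - \mu_1 \mu_2 \;=\; \frac{y_1 y_2}{x_1 x_2}\,(x_1 A_2 \pm x_2 A_1)^2,
\]
so that
\[
D_R = \frac{16\, y_1 y_2}{x_1 x_2}\,(y_1 A_2 - y_2 A_1)^2 (x_1 A_2 - x_2 A_1)^2,
\]
\[
D_S = \frac{16\, y_1 y_2}{x_1 x_2}\,(y_1 A_2 + y_2 A_1)^2 (x_1 A_2 + x_2 A_1)^2.
\]
Since $x_i, y_i, A_i > 0$ throughout $G$, these closed forms deliver $D_R \geq 0$ and $D_S > 0$ at once (and incidentally pinpoint that $D_R = 0$ exactly when $J = 0$ or $x_1(1-y_2) = x_2(1-y_1)$, matching two of the degeneracies in Theorem \ref{T1}). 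The main obstacle is the very last identity: one must guess that re-expressing $1 - x_i - y_i$ via $A_i^2/x_i$ will expose $x_1 A_2 \pm x_2 A_1$ as a perfect square, after which the verification is a one-line expansion; with a computer algebra system the entire argument becomes mechanical.
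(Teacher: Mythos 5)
Your proof is correct; I checked the three identities it rests on ($\nu^{2}+P=-4(y_{1}^{2}A_{2}^{2}+y_{2}^{2}A_{1}^{2})$, $Q=4(y_{1}^{2}A_{2}^{2}+y_{2}^{2}A_{1}^{2})+2\mu_{1}\nu$, and $(y_{1}A_{2}\pm y_{2}A_{1})^{2}-\mu_{1}\mu_{2}=\tfrac{y_{1}y_{2}}{x_{1}x_{2}}(x_{1}A_{2}\pm x_{2}A_{1})^{2}$) and they all expand out, so your closed forms
\[
D_{R}=\tfrac{16y_{1}y_{2}}{x_{1}x_{2}}\,(y_{1}A_{2}-y_{2}A_{1})^{2}(x_{1}A_{2}-x_{2}A_{1})^{2},\qquad
D_{S}=\tfrac{16y_{1}y_{2}}{x_{1}x_{2}}\,(y_{1}A_{2}+y_{2}A_{1})^{2}(x_{1}A_{2}+x_{2}A_{1})^{2}
\]
are valid, and positivity of $x_{i},y_{i},A_{i}$ on $G$ finishes the lemma. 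However, your route is genuinely different from the paper's. The paper writes $\tfrac{1}{16y_{1}y_{2}}D_{R}=U+A_{1}A_{2}V$ and $\tfrac{1}{16y_{1}y_{2}}D_{S}=U-A_{1}A_{2}V$ for explicit $U,V$, invokes Lemma \ref{L12} to get $U>0$ and $V<0$ (hence $D_{S}>0$ immediately), and then rationalizes by the conjugate to obtain $U+A_{1}A_{2}V=\tfrac{(x_{2}(1-y_{1})-x_{1}(1-y_{2}))^{2}J^{2}}{U-A_{1}A_{2}V}\geq 0$, which is recorded as identity (\ref{DR}) and reused in Proposition \ref{P4}(ii). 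You instead complete squares directly in the coefficients and obtain fully factored perfect-square expressions, bypassing both Lemma \ref{L12} and the conjugate trick; since $J=(y_{1}A_{2}-y_{2}A_{1})(y_{1}A_{2}+y_{2}A_{1})$ and $x_{1}^{2}A_{2}^{2}-x_{2}^{2}A_{1}^{2}=x_{1}x_{2}(x_{1}(1-y_{2})-x_{2}(1-y_{1}))$, your formula for $D_{R}$ is algebraically equivalent to (\ref{DR}) and exposes the degeneracy conditions $D_{R}=0\iff J=0$ or $x_{2}(1-y_{1})=x_{1}(1-y_{2})$ at a glance. Your version is arguably cleaner and self-contained, at the cost of having to guess the right square-completions; the paper's version requires less algebraic foresight but leans on the auxiliary inequality lemma and leaves $D_{R}$ as a quotient rather than a product.
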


\begin{proof}
Using (\ref{R}) we have 
\begin{gather*}
\tfrac{1}{4}D_{R}={\large (}\allowbreak \allowbreak
x_{2}y_{1}^{2}+x_{1}y_{2}^{2}-y_{1}y_{2}\left(
2x_{1}y_{2}+2x_{2}y_{1}+4x_{1}x_{2}-x_{1}-x_{2}\right)
-4y_{1}y_{2}A_{1}A_{2}^{2}{\large )}^{2} \\
+{\large (}4y_{1}y_{2}\left(
x_{1}y_{2}+x_{2}y_{1}+2x_{1}x_{2}-x_{2}-x_{1}\right)
-(y_{2}-y_{1})^{2}+8y_{1}y_{2}A_{1}A_{2}\allowbreak {\large )}\left(
x_{1}y_{2}-x_{2}y_{1}\right) ^{2}\text{.}
\end{gather*}

After some simplification we have $\tfrac{1}{16y_{1}y_{2}}%
D_{R}=U+A_{1}A_{2}V $, where 
\begin{gather*}
U=4x_{1}x_{2}y_{1}y_{2}(1-x_{1}-y_{1})(1-x_{2}-y_{2}) \\
-\left( x_{1}y_{2}+x_{2}y_{1}+2x_{1}x_{2}-x_{2}-x_{1}\right) {\large (}%
x_{1}y_{2}^{2}(1-x_{1}-y_{1})+x_{2}y_{1}^{2}(1-x_{2}-y_{2}){\large )} \\
V=\allowbreak 2\left( x_{2}y_{1}+x_{1}y_{2}\right) \left(
x_{2}y_{1}+x_{1}y_{2}+2y_{1}y_{2}-y_{1}-y_{2}\right) \text{.}
\end{gather*}

Using (\ref{S}), one can also show that $\tfrac{1}{16y_{1}y_{2}}%
D_{S}=U-A_{1}A_{2}V$. Since $x_{1}y_{2}+x_{2}y_{1}+2x_{1}x_{2}-x_{2}-x_{1}<0$
and $x_{1}y_{2}+x_{2}y_{1}+2y_{1}y_{2}-y_{1}-y_{2}<0$ by Lemma \ref{L12}, $%
U>0$ and $V<0$. Hence $U-A_{1}A_{2}V>0$ and so $D_{S}>0$. Some more
simplification yields $U+A_{1}A_{2}V=\tfrac{(U+A_{1}A_{2}V)(U-A_{1}A_{2}V)}{%
U-A_{1}A_{2}V}=\tfrac{U^{2}-x_{1}(1-x_{1}-y_{1})x_{2}(1-x_{2}-y_{2})V^{2}}{%
U-A_{1}A_{2}V}=\tfrac{{\large (}x_{2}(1-y_{1})-x_{1}(1-y_{2}){\large )}%
^{2}J^{2}}{U-A_{1}A_{2}V}$, which implies that

\begin{equation}
\dfrac{1}{16y_{1}y_{2}}D_{R}=\dfrac{{\large (}x_{2}(1-y_{1})-x_{1}(1-y_{2})%
{\large )}^{2}J^{2}}{U-A_{1}A_{2}V}\text{,}  \label{DR}
\end{equation}%
and thus $D_{R}\geq 0$.
\end{proof}

\begin{lemma}
\label{L9}(i) $R$ and $S$ are each concave down quadratic polynomials.

(ii) $R^{\prime }(0)>0,R^{\prime }(1)<0,S^{\prime }(0)>0$, and $S^{\prime
}(1)<0$.
\end{lemma}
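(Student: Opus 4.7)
The plan is to derive part (i) immediately from part (ii), and then prove (ii) by an endpoint computation, exploiting an affine symmetry of the setup. For any quadratic $R(t) = at^{2} + bt + c$, the conditions $R^{\prime}(0) = b > 0$ and $R^{\prime}(1) = 2a + b < 0$ force $a < -b/2 < 0$, so the leading coefficient is negative and $R$ is concave down; the same argument applies to $S$. Hence (i) follows from (ii), and only (ii) needs proving.

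To prove (ii), I would read off directly from (\ref{R}) and (\ref{S}) that $R^{\prime}(0) = 2(M - 4y_{1}y_{2}A_{1}A_{2})$, $S^{\prime}(0) = 2(M + 4y_{1}y_{2}A_{1}A_{2})$, $R^{\prime}(1) = 2(N + 4y_{1}y_{2}A_{1}A_{2})$, and $S^{\prime}(1) = 2(N - 4y_{1}y_{2}A_{1}A_{2})$, where $M = x_{2}y_{1}^{2} + x_{1}y_{2}^{2} - y_{1}y_{2}(2x_{1}y_{2} + 2x_{2}y_{1} + 4x_{1}x_{2} - x_{1} - x_{2})$ is the polynomial appearing in the linear coefficient of $R$, and $N$ is obtained by combining the leading and linear coefficients. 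A short calculation yields the useful identity $N = -M^{\ast}$, where $M^{\ast}$ is $M$ with each $x_{i}$ replaced by $u_{i} := 1 - x_{i} - y_{i}$. This substitution is the affine symmetry of $T$ that swaps the vertices $(0,0)$ and $(1,0)$; it leaves $A_{i}$, $y_{i}$, and the constraints in (\ref{1}) invariant (since $u_{i} + y_{i} < 1 \iff x_{i} > 0$), so the $t = 1$ inequalities are equivalent to the $t = 0$ inequalities applied to $(u_{i}, y_{i})$. The entire lemma therefore reduces to proving $R^{\prime}(0) > 0$ and $S^{\prime}(0) > 0$ for every admissible pair $P_{1}, P_{2} \in G$.

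Both come down to size estimates on $M$. Regrouping gives $M = (x_{1}y_{2} + x_{2}y_{1})(y_{1} + y_{2} - 2y_{1}y_{2}) - 4x_{1}x_{2}y_{1}y_{2}$. Since $x_{i} < 1 - y_{i}$ by (\ref{1}), one has $x_{1}x_{2} < (1 - y_{1})(1 - y_{2})$, and by AM--GM the identity $(y_{1}(1-y_{2}) + y_{2}(1-y_{1}))^{2} \ge 4y_{1}y_{2}(1-y_{1})(1-y_{2})$ yields $y_{1} + y_{2} - 2y_{1}y_{2} > 2\sqrt{x_{1}x_{2}y_{1}y_{2}}$. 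A further application of AM--GM to $x_{1}y_{2} + x_{2}y_{1} \ge 2\sqrt{x_{1}x_{2}y_{1}y_{2}}$ then gives $M > 4x_{1}x_{2}y_{1}y_{2} - 4x_{1}x_{2}y_{1}y_{2} = 0$; in particular $S^{\prime}(0) > 0$ follows at once.

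The main obstacle is the strict dominance $M > 4y_{1}y_{2}A_{1}A_{2}$ needed for $R^{\prime}(0) > 0$. Since both sides are positive, this is equivalent to the polynomial inequality $M^{2} > 16y_{1}^{2}y_{2}^{2}x_{1}x_{2}(1-x_{1}-y_{1})(1-x_{2}-y_{2})$ on $G \times G$. I would verify this by direct expansion, using the sign information from Lemma \ref{L12} to control the residual terms, and with the aid of a computer algebra system as was done for the discriminant identity in the proof of Lemma \ref{L18}; one might hope for a clean sum-of-squares form in which the strictness descends from the strict inequalities in (\ref{1}). Once that dominance is established, all four inequalities in (ii) follow via the symmetry argument, and (i) is immediate.
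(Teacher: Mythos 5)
Your reductions are sound where they are carried out, and in places cleaner than the paper's: deducing (i) from (ii) is valid; the identity expressing the $t=1$ derivatives as negatives of the $t=0$ derivatives under the substitution $x_{i}\mapsto 1-x_{i}-y_{i}$ (which preserves $G$ and each $A_{i}$) does check out; and your AM--GM argument that $M=(x_{1}y_{2}+x_{2}y_{1})(y_{1}+y_{2}-2y_{1}y_{2})-4x_{1}x_{2}y_{1}y_{2}>0$ proves $S^{\prime}(0)>0$ (hence $S^{\prime}(1)<0$) directly, whereas the paper obtains these only as corollaries of $R^{\prime}(0)>0$. The genuine gap is that the one inequality everything now rests on, namely $M>4y_{1}y_{2}A_{1}A_{2}$ (i.e.\ $R^{\prime}(0)>0$), is never proved: you defer it to a CAS expansion that you ``hope'' yields a sum of squares. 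That is precisely the hard kernel of the lemma, and it is where the paper spends essentially all of its effort --- it first proves concavity via the factorization $a_{R}=p(x_{2})/a_{S}$, then shows $R^{\prime}(0)$ cannot vanish on the parameter set $H$ by combining Lemma \ref{L16}, Lemma \ref{L18} and concavity, and finally propagates the sign over each connected component $W_{j}$ by an open--closed--nonempty argument anchored at one sample point per component. Your proposal replaces that machinery with an unverified polynomial positivity claim, so $R^{\prime}(0)>0$, $R^{\prime}(1)<0$, and the concavity of $R$ all remain unestablished.

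Moreover, the verification is not routine in the way you suggest. Setting $P_{1}=P_{2}=(x,y)$ gives $M=4xy^{2}(1-x-y)=4y_{1}y_{2}A_{1}A_{2}$ exactly, so $M-4y_{1}y_{2}A_{1}A_{2}$ vanishes identically on the diagonal, which lies in the closure of the admissible region. Consequently $M^{2}-16y_{1}^{2}y_{2}^{2}x_{1}x_{2}(1-x_{1}-y_{1})(1-x_{2}-y_{2})$ cannot be bounded below using only the open conditions $x_{i}>0$, $y_{i}>0$, $x_{i}+y_{i}<1$; any algebraic certificate must carry factors involving $x_{1}-x_{2}$ and $y_{1}-y_{2}$, and since the vanishing locus has codimension two the difference need not factor or admit a clean sum-of-squares form at all. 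There is also a subtlety in your symmetry step: it applies the $t=0$ inequality to the transformed pair $(1-x_{i}-y_{i},\,y_{i})$, which can satisfy $1-x_{1}-y_{1}=1-x_{2}-y_{2}$ even when $x_{1}\neq x_{2}$; so whatever proof of $M>4y_{1}y_{2}A_{1}A_{2}$ you eventually supply must hold for all pairs of distinct points of $G$, not only those satisfying both strict inequalities $x_{1}\neq x_{2}$ and $y_{1}\neq y_{2}$ of (\ref{1}). Until that central inequality is actually established, the proposal proves only half of part (ii) and the concavity of $S$.
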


\begin{proof}
Proving (ii) directly seems a bit complicated, so instead we use a
continuity/connected sets approach. First we show that the leading
coefficients of $R$ and $S$, $a_{R}=4y_{1}y_{2}\left(
x_{1}y_{2}+x_{2}y_{1}+2x_{1}x_{2}-x_{2}-x_{1}\right)
-(y_{2}-y_{1})^{2}+8y_{1}y_{2}A_{1}A_{2}$ and $a_{S}=4y_{1}y_{2}\left(
x_{1}y_{2}+x_{2}y_{1}+2x_{1}x_{2}-x_{2}-x_{1}\right)
-(y_{2}-y_{1})^{2}-8y_{1}y_{2}A_{1}A_{2}$, are negative. Since $%
x_{1}y_{2}+x_{2}y_{1}+2x_{1}x_{2}-x_{2}-x_{1}<0$ by Lemma \ref{L12}, $%
a_{S}<0 $. Now $a_{R}=\tfrac{p(x_{2})}{a_{S}}$, where 
\begin{gather*}
p(x_{2})=16y_{1}^{2}y_{2}^{2}\left( 1-y_{1}\right) ^{2}x_{2}^{2} \\
-8y_{1}y_{2}\times {\large (}%
4y_{1}^{2}x_{1}y_{2}^{2}+y_{1}y_{2}^{2}-y_{2}^{2}-4y_{1}x_{1}y_{2}^{2}+2x_{1}y_{2}^{2}+2y_{1}y_{2}
\\
-2y_{1}^{2}y_{2}-4y_{1}^{2}y_{2}x_{1}+y_{1}^{3}-y_{1}^{2}+2x_{1}y_{1}^{2}%
{\large )}x_{2} \\
+\left(
-4y_{1}x_{1}y_{2}^{2}+4y_{1}y_{2}x_{1}+y_{2}^{2}-2y_{1}y_{2}+y_{1}^{2}%
\right) ^{2}\text{.}
\end{gather*}

$\allowbreak $

$\allowbreak $ The discriminant of $p_{2}$, after factoring, equals $%
\allowbreak 256x_{1}y_{1}^{2}y_{2}^{2}\left( 2y_{1}y_{2}-y_{2}-y_{1}\right)
^{2}\left( y_{2}-y_{1}\right) ^{2}\allowbreak \left( x_{1}+y_{1}-1\right) <0$
by Lemma \ref{L12}. Since $p$ has no real roots and the leading coefficient
of $p$ is positive, $p(x_{2})>0$ for all $x_{2}\in \Re $, which implies that 
$a_{R}<0$. That proves (i). To prove (ii), let $H$ be the set of points in $%
\Re ^{4}$ satisying (\ref{1})--that is, $H=\left\{
(x_{1},x_{2},y_{1},y_{2})\in \Re ^{4}:0<x_{1}\neq x_{2},y_{1}\neq
y_{2}<1,x_{1}+y_{1}<1,x_{2}+y_{2}<1\right\} $. Note that $H$ is not
connected, so we consider the following connected subsets of $H$: $%
W_{1}=\left\{ (x_{1},x_{2},y_{1},y_{2})\in H:x_{1}<x_{2},y_{1}<y_{2}\right\}
,$

$W_{2}=\left\{ (x_{1},x_{2},y_{1},y_{2})\in
H:x_{2}<x_{1},y_{1}<y_{2}\right\} ,$

$W_{3}=\left\{ (x_{1},x_{2},y_{1},y_{2})\in
H:x_{2}<x_{1},y_{2}<y_{1}\right\} $, and

$W_{4}=\left\{ (x_{1},x_{2},y_{1},y_{2})\in
H:x_{1}<x_{2},y_{2}<y_{1}\right\} $. Let

$W_{R,j}=\left\{ (x_{1},x_{2},y_{1},y_{2})\in W_{j}\text{: }R^{\prime
}(0)>0\right\} ,j=1,2,3,4$. It is immediate that $W_{R,j}$ is an open subset
of $W_{j}$. To prove that $W_{R,j}$ is a closed subset of $W_{j}$, for fixed 
$j$, let $\left\{ (x_{1k},x_{2k},y_{1k},y_{2k})\right\} $ be a sequence of
points in $W_{j}$ converging to $(x_{1},x_{2},y_{1},y_{2})$, and let $%
\left\{ R_{k}\right\} $ be the corresponding sequence of quadratics given by
(\ref{R}) with $(x_{1k},x_{2k},y_{1k},y_{2k})$ substituted for $%
(x_{1},x_{2},y_{1},y_{2})$. Let $R$ be given by (\ref{R}), so that $\left\{
R_{k}\right\} $ converges to $R$. Since $R_{k}^{\prime }(0)>0,R^{\prime
}(0)\geq 0$. If $R^{\prime }(0)=0$, then $R(0)\neq 0$ by Lemma \ref{L16},
and thus $R(0)<0$ by (\ref{endptsRS}). But then $R$ has a local maximum at $%
0 $ and is negative at $0$, which would imply that $R$ has no real roots,
contradicting Lemma \ref{L18}. Hence $R^{\prime }(0)>0$, which proves that $%
W_{R,j}$ is a closed subset of $W_{j}$. To prove that $W_{j}$ is nonempty,
choose a point in each of the $W_{j}$'s, $j=1,2,3,4$; $\left( \tfrac{1}{8},%
\tfrac{1}{4},\tfrac{1}{6},\tfrac{1}{2}\right) $ implies that $R^{\prime
}(0)=\allowbreak \tfrac{1}{12}-\tfrac{\sqrt{51}}{144}>0$; $\left( \tfrac{1}{4%
},\tfrac{1}{6},\tfrac{1}{6},\tfrac{1}{3}\right) $implies that $R^{\prime
}(0)=\allowbreak \tfrac{11}{162}-\tfrac{\sqrt{7}}{54}>0$; $\left( \tfrac{1}{4%
},\tfrac{1}{6},\tfrac{1}{2},\tfrac{1}{4}\right) $implies that $R^{\prime
}(0)=\tfrac{1}{8}-\tfrac{\sqrt{2}}{12}>0$; $\left( \tfrac{1}{3},\tfrac{1}{2},%
\tfrac{1}{2},\tfrac{1}{4}\right) $implies that $R^{\prime }(0)=\allowbreak 
\tfrac{1}{12}>0$. Since $W_{j}$ is connected and $W_{R,j}$ is an open,
closed, and nonempty subset of $W_{j}$, $W_{R,j}=W_{j}$. That proves that $%
R^{\prime }(0)>0$ for any $(x_{1},x_{2},y_{1},y_{2})\in H$. Similarly one
can show that $R^{\prime }(1)<0$ for any $(x_{1},x_{2},y_{1},y_{2})\in H$.
Finally, $S^{\prime }(t)=R^{\prime }(t)-16A_{1}A_{2}y_{1}y_{2}\allowbreak
\left( 2t-1\right) $, which implies that $S^{\prime }(0)=R^{\prime
}(0)+16A_{1}A_{2}y_{1}y_{2}>0$ and $S^{\prime }(1)=R^{\prime
}(1)-16A_{1}A_{2}y_{1}y_{2}<0$.
\end{proof}

\begin{proposition}
\label{P4}(i) $R$ has two roots in $[0,1]$ counting multiplicities and $S$
has two distinct roots in $[0,1]$.

(ii) Suppose that $x_{2}(1-y_{1})-x_{1}(1-y_{2})\neq 0\neq $ $J$. Then $R$
has two distinct roots in $[0,1]$.

(iii) If $J=\allowbreak 0$, then $R(t_{0})=R^{\prime }(t_{0})=0$, where $%
t_{0}$ is given by (\ref{t0}).
\end{proposition}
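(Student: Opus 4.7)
The plan is to address the three parts in sequence, each leaning on structure already developed for $R$ and $S$. For part (i), I combine the concavity data from Lemma~\ref{L9} (each parabola opens downward with $R'(0),S'(0)>0$ and $R'(1),S'(1)<0$), the boundary values $R(0)=S(0)\le 0$ and $R(1)=S(1)\le 0$ from (\ref{endptsRS}), and the discriminant bounds $D_R\ge 0$, $D_S>0$ of Lemma~\ref{L18}. The concavity-plus-slope data places the vertex of each parabola at some interior $t_M\in(0,1)$; the discriminant bound forces the value there to be nonnegative; and the nonpositive boundary values then pin all real roots inside $[0,1]$ by the intermediate value theorem. For $R$ this yields two roots counted with multiplicity (a double root at $t_M$ if $R(t_M)=0$); for $S$ the strict inequality $D_S>0$ upgrades this to two distinct roots in $[0,1]$.

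For part (ii), I apply the closed form (\ref{DR}) for $D_R$ directly. The denominator $U-A_1A_2V$ was shown to be strictly positive inside the proof of Lemma~\ref{L18}, and under the hypotheses both $x_2(1-y_1)-x_1(1-y_2)\ne 0$ and $J\ne 0$, so the numerator is strictly positive as well. Therefore $D_R>0$, so $R$ has two distinct real roots, and by part (i) they lie in $[0,1]$.

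Part (iii) is the substantive case. When $J=0$, formula (\ref{DR}) gives $D_R=0$, so $R$ has a unique double root, and the task is to identify that root as $t_0$. The plan is to exploit a polynomial identity, verifiable by a computer algebra system in the spirit of (\ref{BCt0}) and (\ref{DR}):
\begin{equation*}
R(t)\,S(t)\;=\;B(t)^{2}+4\,y_{1}y_{2}\,C(t)\,L(t),
\end{equation*}
where $L(t)=y_{2}\bigl(x_{1}+(1-2x_{1})t\bigr)-y_{1}\bigl(x_{2}+(1-2x_{2})t\bigr)$ is the linear polynomial obtained when one eliminates $w^{2}$ between the two equations of (\ref{3}); it is arranged so that its unique zero is precisely $t_{0}$. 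Given this identity, the rest is short. By construction $L(t_{0})=0$, and by (\ref{BCt0}) the hypothesis $J=0$ forces $B(t_{0})=C(t_{0})=0$. Hence $R(t_{0})S(t_{0})=0$, and differentiating the identity and evaluating at $t_{0}$ yields $(RS)'(t_{0})=2B(t_{0})B'(t_{0})+4y_{1}y_{2}\bigl(C'(t_{0})L(t_{0})+C(t_{0})L'(t_{0})\bigr)=0$. If $R(t_{0})\ne 0$, then $S(t_{0})=0$ and $(RS)'(t_{0})=R(t_{0})S'(t_{0})=0$ forces $S'(t_{0})=0$, making $t_{0}$ a double root of $S$ and contradicting $D_{S}>0$. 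Therefore $R(t_{0})=0$, and since $R$ has a unique double root, that root is $t_{0}$, so $R'(t_{0})=0$ as well.

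The main obstacle is the polynomial identity $R\,S=B^{2}+4y_{1}y_{2}C\,L$ itself: it is a degree-$4$ identity in $t$ whose coefficients mix $x_{1},x_{2},y_{1},y_{2}$ with the radicals $A_{1},A_{2}$, so a hand verification is painful although it is a one-line CAS check (and one can sanity-check it against the boundary values $R(0)S(0)$ and $R(1)S(1)$ using (\ref{endptsRS})). An alternative that avoids the identity is to substitute $t=t_{0}$ directly into $R$ and $R'$ and verify that each reduces to an explicit multiple of $J$; this bypasses the identity but trades one CAS computation for another of comparable size.
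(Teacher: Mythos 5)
Your parts (i) and (ii) coincide with the paper's proof: (i) is exactly the combination of Lemma \ref{L9}, Lemma \ref{L18} and (\ref{endptsRS}), and (ii) reads $D_{R}>0$ off (\ref{DR}). Part (iii) is where you take a genuinely different route. The paper substitutes $t_{0}$ directly into $R$ and $R^{\prime }$, using $J=0$ to replace $A_{1}A_{2}y_{1}y_{2}$ by $x_{1}(1-x_{1}-y_{1})y_{2}^{2}$, and exhibits both values as explicit multiples of $J$ --- short but opaque. You instead invoke the identity $RS=B^{2}+4y_{1}y_{2}CL$ and deduce $R(t_{0})=0$ indirectly via $B(t_{0})=C(t_{0})=L(t_{0})=0$ together with $D_{S}>0$ (to rule out $S$ owning the double root) and $D_{R}=0$ (to upgrade $R(t_{0})=0$ to $R^{\prime }(t_{0})=0$); that chain of deductions is sound. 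The identity itself is correct: it is precisely the statement that the resultant in $w$ of the two quadratics in (\ref{3}) equals $t^{4}R(t)S(t)$, since eliminating $w$ gives $\mathrm{Res}=(t^{2}B)^{2}-(2tC)(-2t^{3}y_{1}y_{2}L)$; I also checked it at $t=0$, $t=1$ and against the paper's first numerical example. (One small slip: $L$ arises from eliminating the \emph{constant} term between the two equations of (\ref{3}), not the $w^{2}$ term --- eliminating $w^{2}$ produces the relation $-2Cw+tB=0$ used in Proposition \ref{P5} --- but your explicit formula for $L$ and the property $L(t_{0})=0$ are what the argument actually uses, so nothing breaks.) What your route buys is a conceptual explanation of \emph{why} $t_{0}$ is the double root of $R$ when $J=0$: it is the point where the elimination degenerates, tying (iii) back to (\ref{BCt0}) and to the role of $G=RS$ in Proposition \ref{P5}. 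The cost is one more unproved CAS identity, of the same flavor as (\ref{BCG}), in place of the paper's single direct substitution. Both proofs (yours and the paper's) tacitly assume $2x_{2}y_{1}-2x_{1}y_{2}+y_{2}-y_{1}\neq 0$ so that $t_{0}$ is defined; that hypothesis is supplied elsewhere by Lemma \ref{L17} under the sign normalization (\ref{5}).
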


\begin{proof}
(i) follows immediately from Lemma \ref{L9}, the fact that $R$ and $S$ each
have real zeros(Lemma \ref{L18}), and the fact that $R$ and $S$ are
non--positive at the endpoints of $[0,1]$ (\ref{endptsRS}). Now suppose that 
$x_{2}(1-y_{1})-x_{1}(1-y_{2})\neq 0\neq $ $J$. By (\ref{DR}) $D_{R}>0$ and
hence the roots of $R$ are distinct. (ii) then follows from (i). To prove
(iii), suppose that $J=0$. By (\ref{28}), $A_{2}=\left( \tfrac{y_{2}}{y_{1}}%
\right) A_{1}$ implies that $%
A_{1}A_{2}y_{1}y_{2}=A_{1}^{2}y_{2}^{2}=x_{1}(1-x_{1}-y_{1})y_{2}^{2}$. A
little simplification shows that $R(t_{0})=4\left(
x_{2}y_{1}-x_{1}y_{2}\right) {\large (}(1-x_{2})y_{1}-(1-x_{1})y_{2}{\large )%
}\allowbreak D(x_{1},x_{2},y_{1},y_{2})$, where

$D(x_{1},x_{2},y_{1},y_{2})=2A_{1}A_{2}y_{1}y_{2}-\allowbreak
x_{2}y_{1}^{2}(1-x_{2}-y_{2})-x_{1}y_{2}^{2}(1-x_{1}-y_{1})=2x_{1}(1-x_{1}-y_{1})y_{2}^{2}-\allowbreak x_{2}y_{1}^{2}(1-x_{2}-y_{2})-x_{1}y_{2}^{2}(1-x_{1}-y_{1})=x_{1}(1-x_{1}-y_{1})y_{2}^{2}-\allowbreak x_{2}y_{1}^{2}(1-x_{2}-y_{2})=-J=0 
$. Since $R^{\prime }(t_{0})=\allowbreak 4\left( y_{2}-y_{1}\right) \tfrac{%
D(x_{1},x_{2},y_{1},y_{2})}{2x_{1}y_{2}-2x_{2}y_{1}+y_{1}-y_{2}},R^{\prime
}(t_{0})=0$ as well.
\end{proof}

Using a computer algebra system, one has the following identity involving
the polynomials $B,C$, and $G=RS$:

\begin{eqnarray}
&&q_{j}(t)B^{2}(t)+4y_{j}{\large (}(2x_{j}-1)t-x_{j}{\large )}%
B(t)C(t)+4y_{j}^{2}C^{2}(t)  \label{BCG} \\
&=&{\large (}\left( 1-4x_{j}y_{j}\right) t^{2}-2x_{j}\left( 1-2y_{j}\right)
t+x_{j}^{2}{\large )}G(t),j=1,2\text{.}  \notag
\end{eqnarray}

\begin{proposition}
\label{P5}Let $G(t)=R(t)S(t)$, where $R$ and $S$ are given by (\ref{R}) and (%
\ref{S}), and let $t_{0}$ be given by (\ref{t0}). Suppose that $%
x_{2}y_{1}-x_{1}y_{2}\neq 0\neq (1-x_{2})y_{1}-(1-x_{1})y_{2}$. .
\end{proposition}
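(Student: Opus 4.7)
The proposition's statement (truncated in the excerpt) is naturally completed as the equivalence: $t_{\ast}\in(0,1)$ satisfies $G(t_{\ast})=0$ if and only if the system (\ref{3}) admits a common $w$-root at $t=t_{\ast}$. The proof plan hinges on the algebraic identity (\ref{BCG}), which already packages the relevant resultant computation, together with the degenerate-case analysis from Lemmas \ref{L11}, \ref{L13}, and \ref{L8}.

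\textbf{Forward direction.} Given a common solution $(w,t)\in S$ of (\ref{3}), I would eliminate the $w^{2}$ and constant terms by multiplying the $j=1$ equation by $q_{2}(t)$, the $j=2$ equation by $q_{1}(t)$, and subtracting; this kills the constant terms (both equal $t^{2}y_{1}^{2}y_{2}^{2}$) and yields the linear relation $2C(t)w = t B(t)$. When $C(t)\neq 0$, substituting $w=tB(t)/(2C(t))$ back into either quadratic in (\ref{3}) and clearing the denominator $4C(t)^{2}/t^{2}$ reproduces exactly the left-hand side of (\ref{BCG}). That identity then gives $q_{j}(t)G(t)=0$, and positivity of $q_{j}(t)$ from (\ref{q}) forces $G(t)=0$.

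\textbf{Backward direction.} Conversely, given $G(t_{\ast})=0$ with $t_{\ast}\in(0,1)$ and $C(t_{\ast})\neq 0$, I would define $w_{\ast}:=t_{\ast}B(t_{\ast})/(2C(t_{\ast}))$. Reading the identity (\ref{BCG}) in the other direction — the right side vanishes, so the left side does, and reversing the substitution — shows that $w_{\ast}$ satisfies both equations of (\ref{3}). Under the running hypothesis that $x_{2}(1-y_{1})-x_{1}(1-y_{2})\neq 0$, Lemma \ref{L5}(ii) then lifts $w_{\ast}$ into the open interval $(0,1)$, so $(w_{\ast},t_{\ast})\in S$.

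\textbf{Degenerate case and multiplicity count.} The principal obstacle is the case $C(t_{\ast})=0$, in which my elimination argument is void and the explicit formula for $w_{\ast}$ is indeterminate. Here (\ref{BCG}) collapses to $q_{j}(t_{\ast})B(t_{\ast})^{2}=0$, so $B(t_{\ast})=0$ as well; Lemma \ref{L13} then pins $t_{\ast}=t_{0}$ and forces $J=0$. At this single value of $t$, Lemma \ref{L11} shows the two equations of (\ref{3}) are scalar multiples, so Lemma \ref{L8} supplies two distinct $w$-roots in $(0,1]$, each giving a common solution of (\ref{3}). On the $G$-side, Proposition \ref{P4}(iii) tells us $R$ has a double zero at $t_{0}$, so $G=RS$ inherits a double root there: the multiplicities match. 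The sign analysis in Lemmas \ref{L6} and \ref{L17} is precisely what is needed to check that this reconciliation stays consistent across all the cases delineated by the hypotheses of Theorem \ref{T1}.
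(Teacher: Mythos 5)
Your reconstruction follows essentially the same route as the paper: eliminate the quadratic term from the two equations of (\ref{3}) to get the linear relation $2C(t)w=tB(t)$, feed $w=\tfrac{t}{2}\tfrac{B(t)}{C(t)}$ into the identity (\ref{BCG}) to pass between solutions of (\ref{3}) and zeros of $G=RS$, and use Lemma \ref{L13} to rule out $B(t)=C(t)=0$ except when $t=t_{0}$ and $J=0$. Your observation that the quadratic factor on the right of (\ref{BCG}) is just $q_{j}(t)$ (so its positivity comes directly from (\ref{q})) is correct and slightly cleaner than the paper, which recomputes the discriminant.

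Two small points. First, in the forward elimination it is the $w^{2}$ terms (both equal to $q_{1}(t)q_{2}(t)w^{2}$) that cancel, not the constant terms: after multiplying by $q_{2}(t)$ and $q_{1}(t)$ the constant terms are $t^{2}y_{1}^{2}q_{2}(t)$ and $t^{2}y_{2}^{2}q_{1}(t)$, and their difference is precisely the source of $t^{2}B(t)$ in the relation $-2tC(t)w+t^{2}B(t)=0$. Were the constant terms actually equal and cancelling, $B$ would never appear and your formula for $w$ would have no content, so the parenthetical should be corrected even though the relation you state is right. Second, the paper's Proposition \ref{P5} carries the hypothesis ``$J\neq 0$ or $t_{1}\neq t_{0}$'' precisely so that the conclusion $C(t_{1})\neq 0$ holds and the degenerate case never arises inside the proposition; the analysis you sketch for $C(t_{*})=0$ (Lemmas \ref{L11} and \ref{L8} supplying two $w$-roots at $t_{0}$, matched against the double root of $R$ from Proposition \ref{P4}(iii)) is carried out in the paper in Case 2 of the proof of Theorem \ref{T1}(i), not here. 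Likewise the appeal to Lemma \ref{L5}(ii) to place $w$ in $(0,1)$ belongs to that proof rather than to this proposition, whose conclusion is only that $(w_{1},t_{1})$ solves the system.
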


(i) If $G(t_{1})=0$, and $J\neq \allowbreak 0$ or $t_{1}\neq t_{0}$, then $%
C(t_{1})\neq 0$ and $(w_{1},t_{1})$ is a solution of (\ref{3}), where $w_{1}=%
\dfrac{t_{1}}{2}\dfrac{B(t_{1})}{C(t_{1})}$.

(ii) If $(w_{1},t_{1})$ is a solution of (\ref{3}) and $J\neq \allowbreak 0$
or $t_{1}\neq t_{0}$, then $C(t_{1})\neq 0,w_{1}=\dfrac{t_{1}}{2}\dfrac{%
B(t_{1})}{C(t_{1})}$, and $G(t_{1})=0$.

\begin{proof}
For any $t_{1}$ with $C(t_{1})\neq 0$, letting $t=t_{1},w=\tfrac{t_{1}}{2}%
\tfrac{B(t_{1})}{C(t_{1})}$ in the left hand side of each equation in (\ref%
{3}) yields

$\dfrac{t_{1}^{2}}{4}q_{j}(t_{1})\left( \tfrac{B(t_{1})}{C(t_{1})}\right)
^{2}+t_{1}^{2}y_{j}{\large (}(2x_{j}-1)t_{1}-x_{j}{\large )}\allowbreak
\left( \tfrac{B(t_{1})}{C(t_{1})}\right) +t_{1}^{2}y_{j}^{2}=\tfrac{t_{1}^{2}%
}{4{\large (}C(t_{1}){\large )}^{2}}{\large (}q_{j}(t_{1})B^{2}(t_{1})+4y_{j}%
{\large (}(2x_{j}-1)t_{1}-x_{j}{\large )}%
B(t_{1})C(t_{1})+4y_{j}^{2}C^{2}(t_{1}){\large )},j=1,2$. Using (\ref{BCG}),
we then have 
\begin{gather}
q_{j}(t_{1})w_{1}^{2}+2ty_{j}{\large (}(2x_{j}-1)t_{1}-x_{j}{\large )}%
\allowbreak w_{1}+t_{1}^{2}y_{j}^{2}=  \notag \\
\dfrac{t_{1}^{2}}{4{\large (}C(t_{1}){\large )}^{2}}\allowbreak {\large (}%
\left( 1-4x_{j}y_{j}\right) t_{1}^{2}-2x_{j}\left( 1-2y_{j}\right)
t_{1}+x_{j}^{2}{\large )}G(t_{1}),  \label{60} \\
j=1,2\text{\ if }w_{1}=\dfrac{t_{1}}{2}\dfrac{B(t_{1})}{C(t_{1})}%
,C(t_{1})\neq 0\text{.}  \notag
\end{gather}

Now suppose that $G(t_{1})=0$, and $J\neq \allowbreak 0$ or $t_{1}\neq t_{0}$%
. By (\ref{BCG}), $q_{j}(t_{1})B^{2}(t_{1})+4y_{j}{\large (}%
(2x_{j}-1)t_{1}-x_{j}{\large )}B(t_{1})C(t_{1})+4y_{j}^{2}C^{2}(t_{1})=%
{\large (}\left( 1-4x_{j}y_{j}\right) t_{1}^{2}-2x_{j}\left( 1-2y_{j}\right)
t_{1}+x_{j}^{2}{\large )}G(t_{1})=0,j=1,2$. If $C(t_{1})=0$, then $%
q_{j}(t_{1})B^{2}(t_{1})=0$, so $B(t_{1})=0$, which contradicts Lemma \ref%
{L13} since one cannot have $B(t_{1})=C(t_{1})=0$ if $J\neq \allowbreak 0$
or if $t_{1}\neq t_{0}$, and so $C(t_{1})\neq 0$. Letting $t=t_{1},w=\tfrac{%
t_{1}}{2}\tfrac{B(t_{1})}{C(t_{1})}$ in the left hand side of each equation
in (\ref{3}) then yields $0$ by (\ref{60}). That proves (i). Now suppose
that $(w_{1},t_{1})$ is a solution of (\ref{3}) and $J\neq \allowbreak 0$ or 
$t_{1}\neq t_{0}$, which implies that $q_{j}(t_{1})w_{1}^{2}+2ty_{j}{\large (%
}(2x_{j}-1)t_{1}-x_{j}{\large )}\allowbreak w_{1}+t_{1}^{2}y_{j}^{2}=0,j=1,2$%
. Eliminate $w^{2}$ in (\ref{3}) by multiplying the 1st equation in (\ref{3}%
) thru by $\allowbreak q_{2}(t)$ and by multiplying the 2nd equation in (\ref%
{3}) thru by $\allowbreak q_{1}(t)$ to obtain 
\begin{eqnarray}
q_{1}(t)q_{2}(t)w^{2}+2ty_{1}{\large (}(2x_{1}-1)t-x_{1}{\large )}%
q_{2}(t)w+t^{2}y_{1}^{2}q_{2}(t) &=&0  \label{12} \\
q_{1}(t)q_{2}(t)w^{2}+2ty_{2}{\large (}(2x_{2}-1)t-x_{2}{\large )}%
\allowbreak q_{1}(t)w+t^{2}y_{2}^{2}q_{1}(t) &=&0\text{.}  \notag
\end{eqnarray}

Let $t=t_{1}$ and $w=w_{1}$ in (\ref{12}) and then subtract and simplify to
obtain $-2C(t_{1})w_{1}+t_{1}B(t_{1})=0$. If $C(t_{1})=0$ then $B(t_{1})=0$
as well, which again contradicts Lemma \ref{L13}. Hence $C(t_{1})\neq 0$,
which implies that $w_{1}=\tfrac{t_{1}}{2}\tfrac{B(t_{1})}{C(t_{1})}$. By (%
\ref{60}), ${\large (}\left( 1-4x_{j}y_{j}\right) t_{1}^{2}-2x_{j}\left(
1-2y_{j}\right) t_{1}+x_{j}^{2}{\large )}G(t_{1})=0,j=1,2$. The quadratic $%
\left( 1-4x_{j}y_{j}\right) t^{2}-2x_{j}\left( 1-2y_{j}\right) t+x_{j}^{2}$
has no real roots since its discriminant is $4x_{j}^{2}\left(
1-2y_{j}\right) ^{2}-4\left( 1-4x_{j}y_{j}\right) x_{1}^{2}=\allowbreak
16x_{j}^{2}y_{j}\left( x_{j}+y_{j}-1\right) <0$, and hence $G(t_{1})=0$.
That proves (ii). \textbf{\ }
\end{proof}

\subsection{Proof of Theorem \protect\ref{T1}}

Throughout this section we assume that $T$ is the unit triangle --the
triangle with vertices $(0,0),(1,0)$, and $(0,1)$, and that that (\ref{1})
holds throughout.

\subsubsection{Proof of (i)}

\begin{proof}
Suppose that $x_{2}y_{1}-x_{1}y_{2}\neq 0,(1-x_{2})y_{1}-(1-x_{1})y_{2}\neq
0 $, and $x_{2}(1-y_{1})-x_{1}(1-y_{2})\neq 0$. Now we must consider two
cases.

\textbf{Case 1: }$J\neq \allowbreak 0$

Then $R$ and $S$ each have two distinct roots in $(0,1)$ by Proposition \ref%
{P4}(i) and (ii) and (\ref{endptsRS}). That gives a total of four distinct
roots, $0<t_{1},t_{2},t_{3},t_{4}<1$ for $G(t)=R(t)S(t)$ by (\ref{ReqS}).

By Proposition \ref{P5}(i), $C(t_{i})\neq 0,i=1,...,4$, and if $w_{i}=\tfrac{%
t_{i}}{2}\tfrac{B(t_{i})}{C(t_{i})},i=1,...,4$, then $(w_{i},t_{i})$ is a
solution of (\ref{3}) and thus we have four distinct solutions of (\ref{3}).
Also, by Proposition \ref{P5}(ii), there are no other solutions of (\ref{3})
since $G$ has precisely the four roots $t_{1},t_{2},t_{3},t_{4}$. \ By Lemma %
\ref{L5}, $0<w_{i}<1,i=1,...,4$. By Proposition \ref{P2}, there are
precisely four distinct ellipses inscribed in $T$ which pass through $P_{1}$
and $P_{2}$.

\textbf{Case 2: }$J=0$

By reordering the points $P_{1}$ and $P_{2}$, if necessary, we may assume
that 
\begin{equation}
x_{2}y_{1}-x_{1}y_{2}>0\text{.}  \label{5}
\end{equation}

Again $S$ has two distinct roots, $t_{1}\neq t_{2}$, in $(0,1)$ by
Proposition \ref{P4}(i) and (\ref{endptsRS}). By Lemma \ref{L17}, $%
2x_{2}y_{1}-2x_{1}y_{2}+y_{2}-y_{1}\neq 0$ and $0<t_{0}<1$, where $t_{0}$ is
given by (\ref{t0}). By Proposition \ref{P4}(iii), $R(t_{0})=R^{\prime
}(t_{0})=0$, and by (\ref{ReqS}), $t_{1}\neq t_{0}\neq t_{2}$. As for case 1
above, by Proposition \ref{P5}(i), letting $w_{i}=\tfrac{t_{i}}{2}\tfrac{%
B(t_{i})}{C(t_{i})},i=1,2$, yields two distinct solutions of (\ref{3}). Now
let $g(w)=q_{1}(t_{0})w^{2}+2t_{0}y_{1}{\large (}(2x_{1}-1)t_{0}-x_{1}%
{\large )}\allowbreak w+t_{0}^{2}y_{1}^{2}$. Then $g$ has two distinct roots 
$w_{3},w_{4}\in (0,1]$ by Lemma \ref{L8}, which implies that $%
q_{1}(t_{0})w_{i}^{2}+2t_{0}y_{1}{\large (}(2x_{1}-1)t_{0}-x_{1}{\large )}%
\allowbreak w_{i}+t_{0}^{2}y_{1}^{2}=0,i=3,4$. By Lemma \ref{L11}, $%
q_{2}(t_{0})w_{i}^{2}+2t_{0}y_{2}{\large (}(2x_{2}-1)t_{0}-x_{2})\allowbreak 
{\large )}w_{i}+t_{0}^{2}y_{2}^{2}=0$ and hence $(w_{3},t_{0})$ and $%
(w_{4},t_{0})$ are two distinct solutions of (\ref{3}). By Lemma \ref{L5}, $%
w_{3},w_{4}\in (0,1)$and by Proposition \ref{P5}(ii), there are no other
solutions of (\ref{3}) since $G$ has precisely the two roots $t_{1}\neq
t_{2} $ and a double root at $t_{0}$. By Proposition \ref{P2}, there are
precisely four distinct ellipses inscribed in $T$ which pass through $P_{1}$
and $P_{2} $.
\end{proof}

$\allowbreak $\textbf{Examples: }(1) $x_{1}=\tfrac{1}{4}$, $y_{1}=\tfrac{1}{8%
}$, $x_{2}=\tfrac{1}{2}$, $y_{2}=\tfrac{1}{6}$; Since $x_{2}y_{1}-x_{1}y_{2}%
\neq 0,(1-x_{2})y_{1}-(1-x_{1})y_{2}\neq 0$, and $%
x_{2}(1-y_{1})-x_{1}(1-y_{2})\neq 0$, there are four distinct ellipses
inscribed in $T$ which pass through $P_{1}=\left( \tfrac{1}{4},\tfrac{1}{8}%
\right) $ and $P_{2}=\left( \tfrac{1}{2},\tfrac{1}{6}\right) $; $%
J=\allowbreak -\dfrac{1}{576}\neq 0$; $R(t)=\allowbreak \allowbreak \left( 
\tfrac{1}{288}\sqrt{60}-\tfrac{5}{144}\right) t^{2}+\left( \tfrac{1}{32}-%
\tfrac{1}{288}\sqrt{60}\right) \allowbreak t-\tfrac{1}{2304}$, which has
roots $-\tfrac{1}{80}\sqrt{60}+\tfrac{3}{8}\pm \left( \tfrac{1}{20}\sqrt{10}-%
\tfrac{1}{8}\sqrt{6}\right) \approx \allowbreak 0.13$ and $0.43$

and $S(t)=\left( -\tfrac{1}{288}\sqrt{60}-\tfrac{5}{144}\right) t^{2}+\left( 
\tfrac{1}{32}+\tfrac{1}{288}\sqrt{60}\right) \allowbreak t-\tfrac{1}{2304}$,
which has roots $\tfrac{1}{80}\sqrt{60}+\tfrac{3}{8}\pm \left( \tfrac{1}{20}%
\sqrt{10}+\tfrac{1}{8}\sqrt{6}\right) \approx 0.008$ and $\allowbreak 0.94$

The four distinct solutions of (\ref{3}) are $\approx (0.43,\allowbreak
0.74),(0.13,0.03),(0.008,0.003)$, and $(\allowbreak 0.94\allowbreak ,0.22)$.

$\allowbreak $(2) $x_{1}=\tfrac{1}{8}$, $x_{2}=\tfrac{1}{4}$, $y_{1}=-\tfrac{%
1}{4}+\tfrac{1}{\sqrt{2}}$, $y_{2}=\tfrac{1}{2}$; Since $%
x_{2}y_{1}-x_{1}y_{2}\neq 0,(1-x_{2})y_{1}-(1-x_{1})y_{2}\neq 0$, and $%
x_{2}(1-y_{1})-x_{1}(1-y_{2})\neq 0$, there are four distinct ellipses
inscribed in $T$ which pass through $P_{1}=\left( \tfrac{1}{8},-\tfrac{1}{4}+%
\tfrac{1}{\sqrt{2}}\right) $ and $P_{2}=\left( \tfrac{1}{4},\tfrac{1}{2}%
\right) $; $J=\allowbreak 0$ and $R(t)=\tfrac{1}{128}\left( -3+2\sqrt{2}%
\right) \left( 4t-\sqrt{2}\right) ^{2}$, which has a double root at $%
t_{0}=\allowbreak \tfrac{\sqrt{2}}{4}\approx \allowbreak 0.35$; $%
S(t)=\allowbreak \left( \tfrac{1}{\sqrt{2}}-\tfrac{15}{16}\right) t^{2}+%
\tfrac{1}{16}\left( 5-\sqrt{2}\right) t+\dfrac{1}{64}(2\sqrt{2}-3)$, which
has roots $\tfrac{59}{194}+\tfrac{25}{194}\sqrt{2}\pm \tfrac{1}{194}\sqrt{%
3470+3532\sqrt{2}}\approx 0.01$ and $0.96$. From Lemma \ref{L8}, $%
g(w)=\allowbreak -\tfrac{1}{64\left( 2-\sqrt{2}\right) ^{2}}{\large (}%
\allowbreak \left( 120\sqrt{2}-172\right) w^{2}+\left( 26\sqrt{2}-34\right)
w+\allowbreak 30\sqrt{2}-43{\large )}\allowbreak $, which has roots $\tfrac{1%
}{4}+\tfrac{1}{4}\sqrt{2}\pm \tfrac{1}{4}\sqrt{2\sqrt{2}-1}\approx 0.27$ and 
$0.94$.

The four distinct solutions of (\ref{3}) are $\approx
(0.35,0.27),(0.35,0.94),(0.01,0.03)$, and $(0.96,\allowbreak 0.58)$.

\subsubsection{Proof of (ii)}

\begin{proof}
As in the proof of Theorem \ref{T1}(i) above, by reordering the points $%
P_{1} $ and $P_{2}$, if necessary, we may assume that (\ref{5}) holds.
Suppose first that 
\begin{equation}
x_{1}(1-y_{2})-x_{2}(1-y_{1})=0\text{.}  \label{26}
\end{equation}%
By Lemma \ref{L1},\textbf{\ }$x_{2}y_{1}-x_{1}y_{2}\neq 0$ and $%
(1-x_{2})y_{1}-(1-x_{1})y_{2}\neq 0$. Solving (\ref{26}) for $y_{2}$ yields 
\begin{equation}
y_{2}=\dfrac{x_{2}y_{1}+x_{1}-x_{2}}{x_{1}}\text{.}  \label{6}
\end{equation}

Also by (\ref{6}) $x_{1}x_{2}(1-x_{1}-y_{1})(1-x_{2}-y_{2})=\allowbreak
x_{2}^{2}(1-x_{1}-y_{1})^{2}$, which implies that%
\begin{equation}
\sqrt{x_{1}x_{2}(1-x_{1}-y_{1})(1-x_{2}-y_{2})}=x_{2}(1-x_{1}-y_{1})\text{.}
\label{11}
\end{equation}

(\ref{11}) implies that $A_{1}A_{2}=\tfrac{x_{2}}{x_{1}}A_{1}^{2}$, which
implies that $A_{2}=\tfrac{x_{2}}{x_{1}}A_{1}$, and so $\tfrac{A_{1}}{A_{2}}=%
\tfrac{x_{1}}{x_{2}}$. Hence $\tfrac{A_{1}}{A_{2}}=\tfrac{y_{1}}{y_{2}}$ if
and only if $\tfrac{y_{1}}{y_{2}}=\tfrac{x_{1}}{x_{2}}$, which cannot hold
since $x_{2}y_{1}-x_{1}y_{2}\neq 0$, and thus $J\neq 0$ by (\ref{28}). Using
(\ref{6}), (\ref{11}), (\ref{R}), and then simplifying, $R(t)=-\tfrac{\left(
x_{2}-x_{1}\right) ^{2}\left( 1-y_{1}\right) ^{2}}{x_{1}^{2}}\allowbreak
t^{2}+2\allowbreak \tfrac{\left( x_{2}-x_{1}\right) ^{2}\left(
1-y_{1}\right) }{x_{1}}t-\left( x_{2}-x_{1}\right) ^{2}\allowbreak =-\tfrac{%
\left( x_{2}-x_{1}\right) ^{2}{\large (}(1-y_{1})t-x_{1}{\large )}^{2}}{%
x_{1}^{2}}$, which implies that $R\left( \tfrac{x_{1}}{1-y_{1}}\right)
=R^{\prime }\left( \tfrac{x_{1}}{1-y_{1}}\right) =0$; Now let $t=\tfrac{x_{1}%
}{1-y_{1}}$ in the first equation in (\ref{3}): $q_{1}(t)w^{2}+2ty_{1}%
{\large (}(2x_{1}-1)t-x_{1}{\large )}\allowbreak
w+t^{2}y_{1}^{2}=\allowbreak x_{1}^{2}y_{1}\left( 1-w\right) \tfrac{\left(
4x_{1}+3y_{1}-4\right) w+y_{1}}{\left( 1-y_{1}\right) ^{2}}=0$, so $w=\tfrac{%
y_{1}}{4-4x_{1}-3y_{1}}$; Now let $t=\tfrac{x_{1}}{1-y_{1}}$ and $w=\tfrac{%
y_{1}}{4-4x_{1}-3y_{1}}$ \ in the second equation in (\ref{3}): $%
q_{2}(t)w^{2}+2ty_{2}{\large (}(2x_{2}-1)t-x_{2}{\large )}\allowbreak
w+t^{2}y_{2}^{2}=-16\tfrac{(x_{2}-x_{1}){\large (}x_{2}y_{1}+x_{1}-x_{2}%
{\large )}\left( 1-x_{1}-y_{1}\right) ^{2}}{\left( 1-y_{1}\right) ^{2}\left(
3y_{1}-4+4x_{1}\right) ^{2}}=0$ implies that $x_{2}y_{1}+x_{1}-x_{2}=0$. But
by (\ref{6}) one then has $y_{2}=0$. Hence $t=\tfrac{x_{1}}{1-y_{1}}$ yields
no solutions of (\ref{3}). Note that if one lets $t=\tfrac{x_{1}}{1-y_{1}}$
then $w=\tfrac{t}{2}\tfrac{B(t)}{C(t)}=$ $1$. Now $S$ has two distinct
roots, $t_{1}\neq t_{2}$, in $(0,1)$ by Proposition \ref{P4}(i) and (\ref%
{endptsRS}). Note that $t_{1}\neq \tfrac{x_{1}}{1-y_{1}}\neq t_{2}$ by (\ref%
{ReqS}). By Proposition \ref{P5}(i), $C(t_{i})\neq 0$ and if $w_{i}=\tfrac{%
t_{i}}{2}\tfrac{B(t_{i})}{C(t_{i})}$, then $(w_{i},t_{i})$ is a solution of (%
\ref{3}), $i=1,2$. Thus we have two distinct solutions of (\ref{3}). Also,
by Proposition \ref{P5}(ii), there are no other solutions of (\ref{3}) since 
$G$ has precisely the two roots $t_{1}\neq t_{2}$ and the double root at $t=%
\tfrac{x_{1}}{1-y_{1}}$. By Proposition \ref{P2}, there are precisely two
distinct ellipses inscribed in $T$ which pass through $P_{1}$ and $P_{2}$.
Now suppose that 
\begin{equation}
x_{2}y_{1}-x_{1}y_{2}=0\text{.}  \label{32}
\end{equation}

By Lemma \ref{L1},\textbf{\ }$(1-x_{2})y_{1}-(1-x_{1})y_{2}\neq 0$ and $%
x_{1}(1-y_{2})-x_{2}(1-y_{1})\neq 0$. Solving (\ref{32}) for $y_{2}$ yields $%
y_{2}=x_{2}\tfrac{y_{1}}{x_{1}}$, which implies that $J=\allowbreak -\tfrac{%
x_{2}(x_{2}-x_{1})y_{1}^{2}}{x_{1}}\neq 0$. Then $R$ has one root, $t_{1}\in 
$ $(0,1)$, by Proposition \ref{P4}(ii) and (\ref{endptsRS}), and $S$ has one
root, $t_{2}\in $ $(0,1)$, by Proposition \ref{P4}(i) and (\ref{endptsRS}).
Note that $t_{1}\neq t_{2}$ by (\ref{ReqS}). By Proposition \ref{P5}(iii), $%
t_{i}\neq t_{0}$ and $C(t_{i})\neq 0,i=1,2$. By Proposition \ref{P5}(i), $%
C(t_{i})\neq 0$ and if $w_{i}=\tfrac{t_{i}}{2}\tfrac{B(t_{i})}{C(t_{i})}$,
then $(w_{i},t_{i})$ is a solution of (\ref{3}), $i=1,2$. Again we have two
distinct solutions of (\ref{3}). Also, by Proposition \ref{P5}(ii), there
are no other solutions of (\ref{3}) since $G$ has precisely the two roots $%
t_{1}\neq t_{2}$ and since $R(0)=S(0)=0$ implies that $G(0)=G^{\prime }(0)=0$%
. Finally,\textbf{\ }suppose that $(1-x_{2})y_{1}-(1-x_{1})y_{2}=0$. Solving
for $y_{2}$ yields $y_{2}=\tfrac{\left( 1-x_{2}\right) y_{1}}{1-x_{1}}$,
which implies that $J=\allowbreak \tfrac{y_{1}^{2}\left( 1-x_{2}\right)
\left( x_{2}-x_{1}\right) (1-x_{1}-y_{1})}{\left( 1-x_{1}\right) ^{2}}\neq 0$%
. The rest of the proof is very similar to the proof of the case when $%
x_{2}y_{1}-x_{1}y_{2}=0$ and we omit the details.
\end{proof}

\textbf{Example: }$x_{1}=\tfrac{1}{3}$, $y_{1}=\tfrac{1}{5}$, $x_{2}=\tfrac{1%
}{4}$, $y_{2}=\allowbreak \tfrac{2}{5}$; Then $%
x_{1}(1-y_{2})-x_{2}(1-y_{1})=\allowbreak 0,R(t)=\allowbreak -\tfrac{1}{3600}%
\left( 12t-5\right) ^{2}$ \ has a double root at $t=\tfrac{5}{12}$ and

$S(t)=\allowbreak -\tfrac{71}{375}t^{2}+\tfrac{137}{750}t-\tfrac{1}{144}$
has roots $\tfrac{137}{284}\pm \tfrac{77}{426}\sqrt{6}\approx 0.04$ and $%
0.93 $; The two distinct solutions of (\ref{3}) are $\approx (0.04,0.04)$
and $(0.93,0.43)$.

\section{Point and Slope--Interior}

Let $P$ be a point which lies . In this section we answer the following
question: For which points $P$ in the interior of a triangle, $T$, and for
which real numbers $r\in \Re $ is there an ellipse inscribed in $T$ which
passes through $P$ and has slope $r$ at $P$ ? Also, if such an ellipse
exists, how many such ellipses ? Our main result is

\begin{theorem}
\label{OnePoint}Let $P=(x_{0},y_{0})\in \limfunc{int}\left( T\right) =$
interior of the triangle, $T$, with vertices $A,B,C$.

(i) Suppose that $r$ is finite and does not equal the slope of any line
through $P$ and one of the vertices of $T$. Then there is a unique ellipse
inscribed in $T$ which passes thru $P$ and has slope $r$ at $P$.

(ii) There is also a unique ellipse inscribed in $T$ which passes through $P$
and has a vertical tangent line at $P$.

(iii) Suppose that $r$ does equal the slope of some line through $P$ and one
of the vertices of $T$. Then there is no ellipse inscribed in $T$ which
passes through $P$ and has slope $r$ at $P$.
\end{theorem}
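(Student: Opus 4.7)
By affine invariance I reduce to the unit triangle $T$ with vertices $(0,0), (1,0), (0,1)$ and $P=(x_0,y_0)$ with $0<x_0,y_0$ and $x_0+y_0<1$. The slopes of the lines through $P$ and the three vertices are $m_1=y_0/x_0$, $m_2=y_0/(x_0-1)$, and $m_3=(y_0-1)/x_0$, all finite, so in particular the vertical-tangent case (ii) never coincides with a vertex slope. Inscribed ellipses are parametrized by $(w,t)\in S=(0,1)\times(0,1)$ via Proposition \ref{P1}.

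For part (iii) I would give a short pole-tangent argument. By Proposition \ref{P1}(ii), the three tangent points of any inscribed ellipse $E$ have parameters strictly inside $(0,1)$, so each vertex $V$ of $T$ lies strictly outside $E$. Consequently the two tangent lines to $E$ through $V$ are exactly the two sides of $T$ meeting at $V$, and no other line through $V$ is tangent to $E$ anywhere. Hence a line $L$ passing through both $V$ and a point $P\in E\cap\limfunc{int}(T)$ is not tangent to $E$ at $P$, meaning the slope of $E$ at $P$ cannot equal the slope of $L$.

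For parts (i) and (ii), I would write down two polynomial conditions on $(w,t)$. The condition $P\in E$, analogous to (\ref{3}), is
\begin{equation*}
q_0(t)w^2+2ty_0\bigl((2x_0-1)t-x_0\bigr)w+t^2y_0^2=0,
\end{equation*}
with $q_0(t)=(x_0-t)^2+4x_0y_0t(1-t)$. The slope condition, from implicit differentiation of (\ref{2}) as in the proof of Proposition \ref{P1}, reads $F_x(x_0,y_0)+rF_y(x_0,y_0)=0$ for finite $r$ and $F_y(x_0,y_0)=0$ (with $t\neq 0$) for case (ii). For existence I would argue by continuity: the first equation cuts out a closed loop $\Gamma\subset\bar{S}$ that touches $\partial S$ at exactly three points, namely $(0,0)$, $(y_0/(1-x_0),1)$, and $(1,x_0/(1-y_0))$, corresponding to the three degenerate double-line ``ellipses'' through $P$ and a vertex of $T$. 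Direct substitution into (\ref{2}) shows that at these three degenerations the slope at $P$ tends to $m_1,m_2,m_3$ respectively. Between them the slope varies continuously on the three arcs of $\Gamma\cap S$, so by the intermediate value theorem every value in $(\Re\cup\{\infty\})\setminus\{m_1,m_2,m_3\}$ is attained.

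Uniqueness is the main difficulty. I would attempt it via a pencil-of-conics argument: if $E_1\neq E_2$ were two inscribed ellipses satisfying the conditions, every conic in the pencil $\lambda E_1+\mu E_2$ would pass through $P$ with the same prescribed tangent there, since $\lambda\nabla E_1(P)+\mu\nabla E_2(P)$ stays parallel to a fixed normal. Because each side $\ell_i$ of $T$ can be (doubly) tangent to at most two members of such a pencil, requiring all three sides to be tangent simultaneously produces an over-determined constraint and forces $E_1=E_2$. Equivalently, elimination of $w$ from the two polynomial equations yields a polynomial in $t$ whose unique root in $(0,1)$ must be verified --- analogous to, but degree-wise simpler than, the $G=RS$ analysis of Section 1. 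The main obstacle I anticipate is organizing this uniqueness step cleanly: showing the slope map $\Gamma\to\Re\cup\{\infty\}$ has degree exactly one, or equivalently that the resultant in $t$ has no spurious root, and here the connected-sets technique of Lemma \ref{L7} should be the right tool for a case analysis.
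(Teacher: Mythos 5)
The core gap is the uniqueness claim in parts (i) and (ii), which you acknowledge but do not repair. Your pencil argument does not close: from ``each side $\ell_i$ is tangent to at most two members of the pencil $\lambda E_1+\mu E_2$'' together with ``$E_1$ and $E_2$ are each tangent to all three sides'' no contradiction follows --- the (at most) two members tangent to each side could simply be $E_1$ and $E_2$ themselves, so nothing is over-determined. A pencil argument can be made to work, but only after dualizing: the dual conics $E_1^{\ast}$ and $E_2^{\ast}$ would share the three points $\ell_1^{\ast},\ell_2^{\ast},\ell_3^{\ast}$ and meet with multiplicity at least two at $L^{\ast}$ (since both are tangent to $L$ at $P$), exceeding the B\'{e}zout bound of four for distinct conics; that is not the argument you wrote. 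You yourself flag that showing the slope map has degree one, equivalently that the resultant has no spurious root in $(0,1)$, remains to be done --- but that \emph{is} the theorem, not an organizational detail. The existence half also rests on unverified claims: that the locus $\Gamma$ is a topological circle meeting $\partial S$ only at the three stated points, that the slope of the inscribed ellipse at $P$ tends to $m_1,m_2,m_3$ at those degenerations, and --- even granting both --- the intermediate value theorem only yields surjectivity onto the complement of $\{m_1,m_2,m_3\}$ after one identifies which pair of limiting slopes each of the three arcs of $\Gamma$ joins and invokes part (iii) to confine each arc's image to a single complementary interval.

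For contrast, the paper proves Theorem \ref{T2} by writing the two conditions as the polynomial system (\ref{13}), exhibiting the explicit solution $(w_0,t_0)$ of (\ref{wt2}), checking $(w_0,t_0)\in S$ from the signs of the discriminants of $q_w$ and $q_t$, and then establishing uniqueness by elimination: any solution satisfies (\ref{16}), the simultaneous vanishing $p_1=p_2=0$ is pinned down by Lemma \ref{L10}, and back-substitution produces the fully factored equation (\ref{17}) whose only admissible root returns $t_0$; part (ii) is the limit $r\to\pm\infty$ and part (iii) falls out of the same factorization. Your argument for part (iii) alone is a genuinely different and essentially sound geometric route --- each vertex is exterior to any inscribed ellipse, the only two tangent lines from that vertex are the two sides of $T$ through it, and a line joining a vertex to an interior point of $T$ is neither, so it cannot be the tangent at $P$ --- and the paper's own Remark following Theorem \ref{T2} anticipates exactly this. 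But for parts (i) and (ii) the proposal is a plan, not a proof.
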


As earlier, by affine invariance it suffices to prove the above theorem for
the \textit{unit} triangle, $T$ --the triangle with vertices $(0,0),(1,0)$,
and $(0,1)$. The theorem above then takes the following form:

\begin{theorem}
\label{T2}Let $P=(x_{0},y_{0})$ lie in the interior of the unit triangle.

(i) Let $r\in \Re $ with $y_{0}\neq rx_{0},y_{0}-1\neq rx_{0}$, and $%
y_{0}\neq r(x_{0}-1)$. Then there is a unique ellipse inscribed in $T$ which
passes through $P$ and has slope $r$ at $P$.

(ii) There is also a unique ellipse inscribed in $T$ which passes through $P$
and has a vertical tangent line at $P$.

(iii) Suppose that $y_{0}=rx_{0},y_{0}=1+rx_{0}$, or $y_{0}=r(x_{0}-1)$.
Then there is no ellipse inscribed in $T$ which passes through $P$ and has
slope $r$ at $P$.
\end{theorem}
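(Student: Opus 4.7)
The plan is to parameterize inscribed ellipses via Proposition \ref{P1} and impose two algebraic conditions on $(w,t)\in S$: one for passage through $P=(x_0,y_0)$, and one for the slope at $P$. By Proposition \ref{P2} (applied with a single point $P$ in place of the pair $P_1,P_2$), the passing-through condition becomes the single quadratic
$$E_P(w,t)\ :=\ q_0(t)\,w^2\,+\,2ty_0\bigl((2x_0-1)t-x_0\bigr)w\,+\,t^2y_0^2\ =\ 0,$$
where $q_0(t)=(x_0-t)^2+4x_0y_0t(1-t)>0$. For the slope condition I would use the implicit-differentiation formula $dy/dx=D(x,y)$ written down inside the proof of Proposition \ref{P1}; setting $D(x_0,y_0)=r$ for part (i) (or setting the denominator of $D$ at $(x_0,y_0)$ equal to $0$ for the vertical case of part (ii)) gives a second polynomial equation $E_r(w,t)=0$, linear in $r$ in the first case.

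For parts (i) and (ii) I would then eliminate one variable: solve $E_P=0$ for $w$ as a two-valued function of $t$ in the manner of \eqref{wt}, and substitute into $E_r$. This reduces the problem to a single polynomial equation in $t$ whose coefficients are explicit rational functions of $(x_0,y_0,r)$. Analogously to the polynomial $G=RS$ appearing in the proof of Theorem \ref{T1}, I expect the correct factor (after clearing the square roots coming from the quadratic formula) to have essentially one root in $(0,1)$. The goal is to show that exactly one such root $t$ produces a valid $w\in(0,1)$ via the analogue of Lemma \ref{L5}; paired with the corresponding branch of $w$ this yields the unique inscribed ellipse asserted.

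Part (iii) admits a short geometric proof. Suppose an inscribed ellipse $E$ passes through $P$ with tangent slope equal to that of the line from $P$ to a vertex $V$ of $T$; then the tangent line to $E$ at $P$ passes through $V$. But the two sides of $T$ incident to $V$ are already tangent to $E$ from $V$, and from any point external to a conic there are at most two tangent lines. Hence the tangent at $P$ would have to coincide with one of those two sides, forcing $P$ to lie on the boundary of $T$ and contradicting $P\in\mathrm{int}(T)$. The three algebraic conditions $y_0=rx_0$, $y_0=r(x_0-1)$, and $y_0-1=rx_0$ encode precisely the ``$r=$ slope of $PV$'' conditions for $V=(0,0)$, $(1,0)$, $(0,1)$ respectively.

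The main obstacle is the uniqueness analysis for parts (i) and (ii): showing that the $t$-polynomial obtained by elimination has a \emph{unique} admissible root and that the matching $w$ lies in $(0,1)$. Unlike Theorem \ref{T1}, where the quartic $G=RS$ gave four ellipses, I expect the relevant factor here to be lower-degree (since the two tangency conditions coalesce into one point-plus-slope condition, collapsing the quartic), but cleanly isolating that factor from spurious ones — and verifying that the three forbidden slopes of part (iii) correspond exactly to the degeneracies where the polynomial fails to have an admissible root — will require careful bookkeeping similar in spirit to Lemmas \ref{L5}--\ref{L18}.
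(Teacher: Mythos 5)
Your overall setup (parameterize inscribed ellipses by $(w,t)\in S$ via Proposition \ref{P1} and impose the passage and slope conditions as two polynomial equations in $(w,t)$) is exactly the paper's starting point, and your argument for part (iii) is correct and genuinely different from the paper's. The paper handles (iii) by pushing the elimination computation through for each of the three forbidden slopes and showing the only candidate $t_1=\frac{x_0}{1-y_0}$ forces $r=r_0$, a contradiction; your tangent-count argument (a vertex $V$ is exterior to the inscribed ellipse and so admits only two tangent lines, both of which are sides of $T$, so a tangent at $P$ passing through $V$ would have to be a side, putting $P$ on $\partial T$) is self-contained, avoids that computation entirely, and in fact proves the stronger statement the paper only remarks on, namely that (iii) holds for any inscribed convex curve.

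The gap is in parts (i) and (ii), which is where the real content of the theorem lies. You reduce to ``a single polynomial equation in $t$'' and then assert that you \emph{expect} it to have exactly one admissible root; that expectation \emph{is} the theorem, and nothing in your proposal establishes it. Moreover, your elimination route (solve the passage quadratic for the two-valued $w(t)$ with its square root, substitute into the slope equation, clear radicals) produces a higher-degree polynomial with spurious factors that you have not identified or controlled. The paper avoids both difficulties: it first eliminates only the $w^2$ terms between the two quadratics of (\ref{13}), obtaining a relation \emph{linear} in $w$, namely $p_1(t)w+y_0t\,p_2(t)=0$; after disposing of the exceptional locus $p_1(t)=p_2(t)=0$ (Lemma \ref{L10}: this occurs only for $r=r_0=\frac{y_0(2x_0+y_0-1)}{x_0(2x_0+y_0-2)}$ and $t=\frac{x_0}{1-y_0}$, which is handled as a separate case), it substitutes $w=-y_0tp_2(t)/p_1(t)$ back into the first equation and obtains the explicit factorization (\ref{17}), whose only admissible roots are $t=\frac{x_0}{1-y_0}$ (excluded unless $r=r_0$) and $t=t_0=\frac{(1-x_0-y_0)(rx_0-y_0)^2}{q_t(r)}$. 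Existence and membership in $S$ are then read off from the closed forms (\ref{wt2}): $q_w$ and $q_t$ are quadratics in $r$ with negative discriminant, hence positive, and $1-w_0$, $1-t_0$ are manifestly positive. Without producing either this closed-form solution or a factorization certifying uniqueness, your argument for (i) does not go through; and for (ii) note additionally that a vertical tangent requires the numerator of $dy/dx$ to be nonzero where the denominator vanishes, not merely the vanishing of the denominator, a point the paper checks explicitly after letting $r\rightarrow\pm\infty$ in (\ref{wt2}).
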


\begin{remark}
Instead of just looking at the class of ellipses inscribed in triangles, one
might see whether the results of Theorem \ref{OnePoint} still hold for the
class of simple closed convex curves inscribed in a triangle, $T$. It seems
clear geometrically(we have not written out a rigorous proof) that there is
no simple closed convex curve inscribed in $T$\ which passes through $P$ and
has slope $r$ at $P$ if $r$ equals the slope of some line through $P$ and
one of the vertices of $T$. So Theorem \ref{OnePoint}(iii) would still hold
for that case. However, the other parts of Theorem \ref{OnePoint} would not
necessarily hold since they depend on that particular family of simple
closed convex curves.
\end{remark}

\subsection{Preliminary Results}

Since $P=(x_{0},y_{0})\in \limfunc{int}\left( T\right) $,

\begin{gather}
0<x_{0}<1,0<y_{0}<1,  \label{18} \\
x_{0}+y_{0}<1\text{.}  \notag
\end{gather}

We find it convenient to introduce the following notation: 
\begin{eqnarray*}
q_{0}(t) &=&(x_{0}-t)^{2}+4t(1-t)x_{0}y_{0}, \\
h_{0}(t) &=&\left( 1-2x_{0}y_{0}\right) t^{2}+\allowbreak 2x_{0}\left(
y_{0}-1\right) t+x_{0}^{2}\text{.}
\end{eqnarray*}%
We showed in (\ref{q}) that $q_{0}$ has no real roots. Also, the
discriminant of $h_{0}$ equals ${\large (}2x_{0}\left( y_{0}-1\right) 
{\large )}^{2}-4\allowbreak \left( 1-2y_{0}x_{0}\right)
x_{0}^{2}=\allowbreak 4x_{0}^{2}y_{0}\left( 2x_{0}+y_{0}-2\right) <0$. Thus

\begin{equation}
q_{0}(t)\neq 0\neq h_{0}(t),t\in \Re \text{.}  \label{qh0}
\end{equation}%
Differentiating the left hand side of (\ref{2}) with respect to $x$ and
solving for $\dfrac{dy}{dx}$ yields, if $-2xw^{2}t+2xw^{2}+(2x-1)wt-xw+yt%
\neq 0$, 
\begin{equation}
\dfrac{dy}{dx}=\dfrac{w(2ywt^{2}-2yt^{2}+\left( 1-2y\right) wt-xw+yt)}{%
t(-2xw^{2}t+2xw^{2}+(2x-1)wt-xw+yt)}\text{.}  \label{31}
\end{equation}%
Now set $\dfrac{dy}{dx}=r$, let $x=x_{0},y=y_{0}$ in (\ref{2}) and in (\ref%
{31}), and simplify to obtain the following:

\begin{proposition}
\label{P3}Let $P=(x_{0},y_{0})$ lie in the interior of the unit triangle and
let $r\in \Re $. Then there is an ellipse, $E$, which passes through $P$ and
has slope $r$ at $P$ if and only if the following system of equations holds
for some $(w,t)\in S=(0,1)\times (0,1)$:
\end{proposition}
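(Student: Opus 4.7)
The proof is essentially substitutional, leaning entirely on Proposition \ref{P1}(i). That proposition gives a bijection between inscribed ellipses in the unit triangle and parameters $(w,t) \in S$, where each such $(w,t)$ corresponds to the ellipse whose equation is given by (\ref{2}). Under this bijection, I plan to translate the two geometric conditions on $E$ --- passing through $P$ and having slope $r$ at $P$ --- into two polynomial equations in the unknowns $w$ and $t$. The proposition then follows by reading off those equations.

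The first equation comes from the incidence condition. Substituting $x = x_{0}$, $y = y_{0}$ directly into (\ref{2}) gives a polynomial equation in $w$ and $t$ that holds if and only if $P$ lies on the ellipse parametrized by $(w,t)$. The second equation comes from the slope condition. Implicit differentiation of (\ref{2}) has already been carried out in (\ref{31}), so I would substitute $(x,y) = (x_0, y_0)$ into (\ref{31}), set the result equal to $r$, and clear the denominator to obtain a polynomial equation. Combining the forward and backward directions: if $E$ is inscribed in $T$, passes through $P$, and has slope $r$ at $P$, then $E$ has the form (\ref{2}) for some $(w,t) \in S$ by \ref{P1}(i), and the two conditions immediately yield the system; conversely, given $(w,t) \in S$ solving the system, (\ref{2}) defines an inscribed ellipse by \ref{P1}(i), the first equation says $P$ lies on it, and the second says its slope at $P$ is $r$.

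The main obstacle --- and really the only thing beyond bookkeeping --- is justifying the cross-multiplication in the slope condition. The denominator of (\ref{31}) evaluated at $(x_0, y_0)$ is $t\bigl(-2x_{0}w^{2}t+2x_{0}w^{2}+(2x_{0}-1)wt-x_{0}w+y_{0}t\bigr)$. In the forward direction, if this denominator vanished at $P$ for a finite slope $r$, the tangent at $P$ would be vertical, contradicting the assumption. In the reverse direction, once $(w,t) \in S$ solves the polynomial system, one must check the denominator is genuinely nonzero at $P$ before concluding that the slope is exactly $r$ rather than a spurious $0/0$ artifact. Smoothness of the ellipse --- i.e.\ the fact that the two partial derivatives of the defining polynomial in (\ref{2}) cannot simultaneously vanish on the curve --- rules out that degeneracy, so the cross-multiplied equation is equivalent to $\tfrac{dy}{dx}=r$ as required.
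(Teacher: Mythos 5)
Your proposal is correct and matches the paper's own treatment: the paper derives (\ref{13}) exactly as you describe, by substituting $(x_{0},y_{0})$ into (\ref{2}) and into the implicit-derivative formula (\ref{31}), setting the latter equal to $r$, and clearing denominators, with Proposition \ref{P1}(i) supplying the bijection between inscribed ellipses and $(w,t)\in S$. Your additional check that the cross-multiplication is reversible --- ruling out a simultaneous vanishing of numerator and denominator via smoothness of the conic --- is a point the paper leaves implicit, and it is handled correctly.
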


\begin{gather}
q_{0}(t)w^{2}+2ty_{0}{\large (\allowbreak }2tx_{0}-t-x_{0}{\large )}%
\allowbreak w+t^{2}y_{0}^{2}=0  \label{13} \\
{\large (}\left( \allowbreak 2rt^{2}-2rt-1\right) x_{0}+2t(t-1)y_{0}+t%
{\large )}w^{2}  \notag \\
-t{\large (}(2t-1)y_{0}+r(2t-1)x_{0}-rt{\large )}w-r\allowbreak y_{0}t^{2}=0%
\text{.}  \notag
\end{gather}

\subsection{Proof of Theorem \protect\ref{T2}}

\begin{proof}
We shall prove first below that the unique solution of (\ref{13}), with $%
(w,t)\in S$, is given by 
\begin{eqnarray}
w &=&w_{0}=\dfrac{\left( 1-x_{0}-y_{0}\right) \left( rx_{0}-y_{0}\right) ^{2}%
}{\allowbreak q_{w}(r)}  \label{wt2} \\
t &=&t_{0}=\dfrac{\left( 1-x_{0}-y_{0}\right) \left( rx_{0}-y_{0}\right) ^{2}%
}{q_{t}(r)}\text{,}  \notag
\end{eqnarray}%
where 
\begin{eqnarray*}
q_{w}(r) &=&\left( x_{0}^{2}-x_{0}^{3}\right)
r^{2}+2y_{0}x_{0}^{2}r+y_{0}-y_{0}^{2}-x_{0}y_{0}^{2}, \\
q_{t}(r) &=&\left( x_{0}-x_{0}^{2}y_{0}-x_{0}^{2}\right)
r^{2}+2x_{0}y_{0}^{2}r+y_{0}^{2}-y_{0}^{3}\text{.}
\end{eqnarray*}%
Note that $q_{w}$ has discriminant $\left( 2y_{0}x_{0}^{2}\right)
^{2}-4\left( x_{0}^{2}-x_{0}^{3}\right)
(y_{0}-y_{0}^{2}-x_{0}y_{0}^{2})=\allowbreak $

$4x_{0}^{2}y_{0}\left( x_{0}+y_{0}-1\right) <0$ and thus no real roots.
Since $q_{w}$ has positive leading coefficient, we have 
\begin{equation}
\allowbreak q_{w}>0,r\in \Re \text{.}  \label{21}
\end{equation}%
Similarly, $q_{t}$ has discriminant $(2x_{0}y_{0}^{2})^{2}-4\left(
x_{0}-x_{0}^{2}y_{0}-x_{0}^{2}\right) (y_{0}^{2}-y_{0}^{3})=\allowbreak $

$4x_{0}y_{0}^{2}\left( x_{0}+y_{0}-1\right) <0$. Since $%
y_{0}^{2}-y_{0}^{3}=y_{0}^{2}(1-y_{0})>0$, 
\begin{equation}
q_{t}>0,r\in \Re \text{.}  \label{22}
\end{equation}%
We arrived at (\ref{wt2}) by manipulating the equations in (\ref{13}) and
then assuming that certain expressions were not $0$; Some of those
manipulations will appear below, but we prove directly now that $%
(w_{0},t_{0})\in S$ and is a solution of (\ref{13}). By directly
substituting $w=w_{0}$ and $t=t_{0}$ into each equation in (\ref{13}) and
using a computer algebra system, it follows that

$q_{0}(t)w^{2}+2ty_{0}{\large (\allowbreak }2tx_{0}-t-x_{0}{\large )}%
\allowbreak w+t^{2}y_{0}^{2}=\allowbreak 0$ and ${\large (}\left(
\allowbreak 2rt^{2}-2rt-1\right) x_{0}+2t(t-1)y_{0}+t{\large )}w^{2}-t%
{\large (}(2t-1)y_{0}+r(2t-1)x_{0}-rt{\large )}w-r\allowbreak
y_{0}t^{2}=\allowbreak 0$. Now it follows immediately from (\ref{21}) and (%
\ref{22}) that $w_{0},t_{0}>0$. Also, $1-w_{0}=\tfrac{y_{0}\left(
rx_{0}-y_{0}+1\right) ^{2}}{q_{w}(r)}>0$ by (\ref{21}) and $1-t_{0}=\tfrac{%
x_{0}\left( rx_{0}-y_{0}-r\right) ^{2}}{q_{t}(r)}>0$ by (\ref{22}), which
implies that $w_{0},t_{0}<1$ and so $(w_{0},t_{0})\in S$. We prove now that $%
(w_{0},t_{0})$ is the only solution of (\ref{13}) with $(w,t)\in S$. So
suppose that $(w_{1},t_{1})\in S$ is a solution of (\ref{13}). Substitute $%
w=w_{1},t=t_{1}$ into (\ref{13}) and then eliminate the $w_{1}^{2}$ term
from (\ref{13}), multiply thru by $\left( \allowbreak
2rt_{1}^{2}-2rt_{1}-1\right) x_{0}+2t_{1}(t_{1}-1)y_{0}+t_{1}$ in the first
equation in (\ref{13}), multiply thru by $\allowbreak q_{0}(t_{1})$ in the
second equation in (\ref{13}), subtract, factor, and simplify to obtain:%
\begin{equation}
p_{1}(t_{1})w_{1}+y_{0}t_{1}p_{2}(t_{1})=0\text{,}  \label{16}
\end{equation}%
where 
\begin{eqnarray*}
p_{1}(t) &=&\left( -r+2y_{0}-4y_{0}^{2}+2rx_{0}\right) t^{3}+\left(
-3y_{0}-4x_{0}^{2}r+rx_{0}+4y_{0}^{2}\right) \allowbreak t^{2}+ \\
&&x_{0}\left( 2x_{0}^{2}r-2x_{0}y_{0}+rx_{0}+2y_{0}\right) t-x_{0}^{2}\left(
rx_{0}-y_{0}\right) , \\
p_{2}(t) &=&\left( 2y_{0}^{2}-2x_{0}y_{0}r+r\right) t^{2}+\left(
y_{0}+2x_{0}y_{0}r-2rx_{0}-2y_{0}^{2}\right) \allowbreak t+ \\
&&\allowbreak x_{0}\left( rx_{0}-y_{0}\right) \text{.}
\end{eqnarray*}%
Note that we have shown that if (\ref{13}) holds with $w=w_{1},t=t_{1}$,
then (\ref{16}) holds. It is important now to consider the following system
of equations: 
\begin{equation}
p_{1}(t)=0,p_{2}(t)=0\text{.}  \label{15}
\end{equation}

We now prove the following lemma.
\end{proof}

\begin{lemma}
\label{L10} (\ref{15}) holds if and only if $r=r_{0}=\dfrac{y_{0}\allowbreak
\left( 2x_{0}+y_{0}-1\right) }{x_{0}\left( 2x_{0}+y_{0}-2\right) }$ and $t=%
\dfrac{x_{0}}{1-y_{0}}$.
\end{lemma}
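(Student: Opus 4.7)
The plan is to treat the two directions separately. For the ``if'' direction, I would directly substitute $t = x_0/(1-y_0)$ and $r = r_0$ into $p_1(t)$ and $p_2(t)$ and verify that both vanish by routine algebraic simplification (aided by a computer algebra system). This is purely computational and involves no delicate estimates.

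For the ``only if'' direction, the key observation is that both $p_1$ and $p_2$ are \emph{linear in $r$}. Collecting terms in $p_2$ gives the decomposition
\[
p_2(t) = r\, h_0(t) + y_0\bigl(2y_0 t^2 + (1-2y_0)t - x_0\bigr),
\]
and $p_1$ admits an analogous decomposition $p_1(t) = r\,\alpha(t) + \beta(t)$, where $\alpha$ and $\beta$ are polynomials in $t$ of degree $3$ with coefficients in $x_0, y_0$. Since $h_0(t) \neq 0$ for all real $t$ by \eqref{qh0}, the equation $p_2(t)=0$ uniquely determines
\[
r = -\,\frac{y_0\bigl(2y_0 t^2 + (1-2y_0)t - x_0\bigr)}{h_0(t)}.
\]
Substituting this into $p_1(t) = 0$ and multiplying through by $h_0(t)$ yields a single polynomial equation in $t$ alone, of degree $5$, whose leading coefficient simplifies to $2y_0(1-y_0)(1-4x_0y_0)$; this is nonvanishing since $4x_0y_0 \leq (x_0+y_0)^2 < 1$ by \eqref{18}. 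I would factor this quintic with a computer algebra system: the ``if'' direction forces $(1-y_0)t - x_0$ to be a factor, and the plan is to show that the complementary quartic has no real roots (for example, via a discriminant argument or sum-of-squares decomposition in the style of Lemmas \ref{L12} and \ref{L18}). This isolates $t = x_0/(1-y_0)$ as the unique real solution, and the formula above then recovers $r = r_0$ exactly as in the ``if'' computation.

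The main obstacle is the factorization step and the real-root analysis of the complementary quartic factor. The calculation is mechanical but tedious, of the same flavor as the discriminant computations in Lemmas \ref{L16} and \ref{L18}; the nonvanishing of the leading coefficient guarantees that no degenerate drop in degree has to be considered separately, which simplifies the bookkeeping.
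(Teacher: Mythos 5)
Your elimination strategy is essentially the paper's own: both $p_{1}$ and $p_{2}$ are linear in $r$, one solves for $r$ and substitutes, and the resulting quintic in $t$ is factored. (The paper solves both $p_{1}=0$ and $p_{2}=0$ for $r$ and equates the two expressions; you solve only $p_{2}=0$, which is marginally cleaner because $h_{0}$ never vanishes by (\ref{qh0}), whereas the paper must separately rule out zeros of the other denominator $\left( 2x_{0}t-t-x_{0}\right) \left( x_{0}-t\right) ^{2}$.) Your decomposition $p_{2}(t)=r\,h_{0}(t)+y_{0}\bigl(2y_{0}t^{2}+(1-2y_{0})t-x_{0}\bigr)$ and your leading coefficient $2y_{0}(1-y_{0})(1-4x_{0}y_{0})$ both check out: the quintic you construct is exactly $2y_{0}\,t(1-t)\,q_{0}(t)\,\bigl((y_{0}-1)t+x_{0}\bigr)$, which is the factorization the paper records as $2t\left( 1-t\right) q_{0}(t)\left( ty_{0}-t+x_{0}\right) =0$.

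That explicit factorization, however, exposes the one step of your plan that would fail. After removing the factor $(1-y_{0})t-x_{0}$, the complementary quartic is a nonzero constant times $t(1-t)q_{0}(t)$, which is \emph{not} root-free: it vanishes at $t=0$ and $t=1$. Only the $q_{0}$ factor can be disposed of by a discriminant or sum-of-squares argument (that is precisely (\ref{q})/(\ref{qh0})). Moreover the roots $t=0$ and $t=1$ are not artifacts of the elimination: since $p_{1}(0)=-x_{0}^{2}(rx_{0}-y_{0})$ and $p_{2}(0)=x_{0}(rx_{0}-y_{0})$, the pair $(r,t)=(y_{0}/x_{0},\,0)$ genuinely satisfies (\ref{15}), and likewise $p_{1}(1)=-(1-x_{0})^{2}\bigl((1-x_{0})r+y_{0}\bigr)$ and $p_{2}(1)=(1-x_{0})\bigl((1-x_{0})r+y_{0}\bigr)$ make $(r,t)=\bigl(y_{0}/(x_{0}-1),\,1\bigr)$ a solution as well. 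So a ``no real roots'' argument for the quartic cannot succeed, and the uniqueness assertion is only recovered by invoking the restriction $t\in (0,1)$ that the surrounding argument supplies via $(w,t)\in S$ (the paper's own proof silently does the same when it passes from $2t\left( 1-t\right) q_{0}(t)\left( ty_{0}-t+x_{0}\right) =0$ directly to $t=x_{0}/(1-y_{0})$). You need to add that restriction, or an explicit exclusion of $t=0$ and $t=1$, to your final step; otherwise the plan is sound and matches the paper's computation.
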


\begin{proof}
Note that $2x_{0}+y_{0}-2<x_{0}+y_{0}-1<0$. First, if $r=r_{0}$ and $t=%
\tfrac{x_{0}}{1-y_{0}}$, then direct substitution shows that (\ref{15})
holds. Conversely, suppose that (\ref{15}) holds; Since $p_{1}(x_{0})=%
\allowbreak 4x_{0}^{2}y_{0}^{2}\left( 1-x_{0}\right) \neq 0$, so we may
assume that $t\neq x_{0}$; $p_{1}(t)=0$ implies that $r=y_{0}\tfrac{\left(
4y_{0}-2\right) t^{3}+\left( 3-4y_{0}\right) t^{2}+\left(
2x_{0}^{2}-2x_{0}\right) \allowbreak t-x_{0}^{2}}{\allowbreak \left(
2x_{0}t-t-x_{0}\right) \left( x_{0}-t\right) ^{2}}$and $p_{2}(t)=0$ implies
that $r=y_{0}\tfrac{-2y_{0}t^{2}+\left( 2y_{0}-1\right) t+x_{0}}{\allowbreak
h_{0}(t)}$. If $2x_{0}t-t-x_{0}=0$, then $t=\tfrac{x_{0}}{2x_{0}-1}$, which
lies outside $(0,1)$ if $x_{0}\in (0,1)$, so the denominator of the first
expression for $r$ never vanishes. The denominator of the second expression
for $r$ also never vanishes by (\ref{qh0}). Setting the two expressions for $%
r$ equal to one another yields ${\large (}\left( 4y_{0}-2\right)
t^{3}+\left( 3-4y_{0}\right) t^{2}+\left( 2x_{0}^{2}-2x_{0}\right)
\allowbreak t-x_{0}^{2}{\large )}h_{0}(t){\large -(}\allowbreak \left(
2x_{0}t-t-x_{0}\right) \left( x_{0}-t\right) ^{2}{\large )(}%
-2y_{0}t^{2}+\left( 2y_{0}-1\right) t+x_{0}{\large )}=0$, which implies that 
$2t\left( 1-t\right) q_{0}(t)\left( ty_{0}-t+x_{0}\right) =0$. By (\ref{qh0}%
), $t=\tfrac{x_{0}}{1-y_{0}}$, and substituting back for $t$ in either
expression for $r$ above yields $r=r_{0}$.

We now consider two cases.

\textbf{Case 1:} $r\neq r_{0}$ or $t_{1}\neq \dfrac{x_{0}}{1-y_{0}}$

If $p_{1}(t_{1})=0$, then by Lemma \ref{L10}, $p_{2}(t_{1})\neq 0$, else (%
\ref{15}) would hold with $t=t_{1}$. But then $(w_{1},t_{1})$ cannot be a
solution of (\ref{13}) since (\ref{16}) does not hold. Thus we may assume
that $p_{1}(t_{1})\neq 0$ and (\ref{16}) yields 
\begin{equation}
w_{1}=-\dfrac{y_{0}t_{1}p_{2}(t_{1})}{p_{1}(t_{1})}\text{.}  \label{19}
\end{equation}

Substituting into the first equation in (\ref{13}) using (\ref{19}) yields

$\tfrac{q_{0}(t_{1}){\large (}y_{0}tp_{2}(t_{1}){\large )}^{2}-2ty_{0}%
{\large (\allowbreak }2tx_{0}-t_{1}-x_{0}{\large )(}y_{0}tp_{2}(t_{1})%
{\large )}p_{1}(t_{1})\allowbreak +t_{1}^{2}y_{0}^{2}{\large (}p_{1}(t_{1})%
{\large )}^{2}{\large )}}{{\large (}p_{1}(t_{1}){\large )}^{2}}=0$;
Simplifying and factoring the numerator gives%
\begin{gather}
-4y_{0}^{2}\left( 1-t_{1}\right) t_{1}^{3}q_{0}(t_{1})\left(
t_{1}y_{0}-t_{1}+x_{0}\right) {\large \times }  \label{17} \\
{\large (}-q_{t}(r)t_{1}+\left( 1-x_{0}-y_{0}\right) \left(
rx_{0}-y_{0}\right) ^{2}{\large )}=0\text{.}  \notag
\end{gather}

By (\ref{qh0}), the solution of (\ref{17} ) is $t_{1}=\tfrac{x_{0}}{1-y_{0}}$
or $t_{1}=\tfrac{\left( 1-x_{0}-y_{0}\right) \left( rx_{0}-y_{0}\right) ^{2}%
}{q_{t}(r)}$; If $t_{1}=\tfrac{x_{0}}{1-y_{0}}$, then $\allowbreak $the
first equation in (\ref{13}), with $w=w_{1},t=t_{1}$ becomes $%
x_{0}^{2}y_{0}\left( 1-w_{1}\right) \tfrac{\left( -4+4x_{0}+3y_{0}\right)
w_{1}+y_{0}}{\left( 1-y_{0}\right) ^{2}}=0$, which holds if and only if $%
w_{1}=1\notin S$ or $w_{1}=\tfrac{y_{0}}{4-4x_{0}-3y_{0}}$; Substituting $t=%
\tfrac{x_{0}}{1-y_{0}}$ and $w=\tfrac{y_{0}}{4-4x_{0}-3y_{0}}$ into the
second equation in (\ref{13}) gives $\allowbreak 4x_{0}y_{0}\left(
1-x_{0}-y_{0}\right) \tfrac{\left( 2x_{0}^{2}+y_{0}x_{0}-2x_{0}\right)
r-2y_{0}x_{0}+y_{0}-y_{0}^{2}}{\left( 1-y_{0}\right) ^{2}\left(
-4+4x_{0}+3y_{0}\right) ^{2}}=0\allowbreak $, which holds if and only if $%
r=r_{0}$. Hence $t_{1}\neq \tfrac{x_{0}}{1-y_{0}}$, which implies that $%
t_{1}=\tfrac{\left( 1-x_{0}-y_{0}\right) \left( rx_{0}-y_{0}\right) ^{2}}{%
q_{t}(r)}$, which, when substituted into (\ref{19}) yields $w_{1}=\tfrac{%
\left( 1-x_{0}-y_{0}\right) \left( rx_{0}-y_{0}\right) ^{2}}{q_{w}(r)}$,
which yields the unique solution in \ref{wt2}). That proves that

$(w_{0},t_{0})$ is the only solution in $S$ of (\ref{13}).

\textbf{Case 2: } $r=r_{0}$ and $t_{1}=\dfrac{x_{0}}{1-y_{0}}$

Then $\allowbreak $the first equation in (\ref{13}), with $w=w_{1},t=t_{1}$,
becomes

$x\allowbreak _{0}^{2}y_{0}\left( 1-w_{1}\right) \tfrac{\left(
4x_{0}+3y_{0}-4\right) w_{1}+y_{0}}{\left( 1-y_{0}\right) ^{2}}=0$, and the
second equation in (\ref{13}) becomes $\allowbreak $

$x_{0}y_{0}\left( 2x_{0}+y_{0}-1\right) \left( w_{1}-1\right) \tfrac{\left(
4x_{0}+3y_{0}-4\right) w_{1}+y_{0}}{\left( 2x_{0}+y_{0}-2\right) \left(
1-y_{0}\right) ^{2}}=0$; Note that $%
2x_{0}+y_{0}-2=x_{0}+x_{0}+y_{0}-2<x_{0}-1<0$ by (\ref{18}); Hence $%
(w_{1},t_{1})\in S$ is a solution of (\ref{13}) only if $w_{1}=\tfrac{y_{0}}{%
4-4x_{0}-3y_{0}}$; Letting $r=r_{0}$ in (\ref{wt2}) yields $w_{1}=w_{0}$ and 
$t_{1}=t_{0}$, which proves again that $(w_{0},t_{0})$ is the only solution
in $S$ of (\ref{13}).

To prove Theorem \ref{T2}(ii): Letting $r\rightarrow \pm \infty $ in (\ref%
{wt2}) yields 
\begin{equation}
w_{0}=\allowbreak \dfrac{1-x_{0}-y_{0}}{1-x_{0}},t_{0}=x_{0}\dfrac{%
1-x_{0}-y_{0}}{1-x_{0}(1+y_{0})}\text{.}  \label{33}
\end{equation}%
Substituting the values in (\ref{33}) for $w$ and $t$, respectively, into (%
\ref{31}) and letting $x=x_{0},y=y_{0}$ yields $\tfrac{-2x_{0}y_{0}\left(
1-x_{0}-y_{0}\right) ^{3}}{(1-x_{0})\left( 1-x_{0}-y_{0}x_{0}\right) ^{2}}%
\neq 0$ for the numerator, and $0$ for the denominator.

$\allowbreak $One can then show directly that the solution in (\ref{33}) is
unique, or use the uniqueness of the solution for finite $r$ and use a
perturbation argument. We omit the details. To prove Theorem \ref{T2}(iii),
suppose that $(w_{1},t_{1})\in S$ is a solution of (\ref{13}). If $r=\tfrac{%
y_{0}}{x_{0}}$, then $r=r_{0}$ as well if and only if $\tfrac{2x_{0}+y_{0}-1%
}{2x_{0}+y_{0}-2}=1$, which has no solution.\ Thus $r\neq r_{0}$ and we can
follow the steps above in case 1 to obtain (\ref{17}), which simplifies to $%
-4y_{0}^{2}\left( 1-t_{1}\right) t_{1}^{3}q_{0}(t_{1})\left(
t_{1}y_{0}-t_{1}+x_{0}\right) \tfrac{y_{0}^{2}t_{1}}{x_{0}}=0$ and thus $%
t_{1}=\tfrac{x_{0}}{1-y_{0}}$. Arguing as above in case 1, $t_{1}=\tfrac{%
x_{0}}{1-y_{0}}$ implies that $r=r_{0}$. $\allowbreak $Now suppose that $r=%
\tfrac{y_{0}-1}{x_{0}}$. If $r=r_{0}$ as well, then $y_{0}-1=\tfrac{%
y_{0}\allowbreak \left( 2x_{0}+y_{0}-1\right) }{2x_{0}+y_{0}-2}$, which
implies that $(y_{0}-1)\left( 2x_{0}+y_{0}-2\right) -y_{0}\allowbreak \left(
2x_{0}+y_{0}-1\right) =0$, so that $\allowbreak 2(1-x_{0}-y_{0})=0$, which
cannot hold by (\ref{18}). Thus $r\neq r_{0}$ and (\ref{17}) simplifies to

$4y_{0}^{2}\left( 1-t_{1}\right) t_{1}^{3}q_{0}(t_{1})\left(
t_{1}y_{0}-t_{1}+x_{0}\right) \allowbreak \left( 1-x_{0}-y_{0}\right) \tfrac{%
(1-y_{0})t_{1}+x_{0}}{x_{0}}=0$ and again $t_{1}=\tfrac{x_{0}}{1-y_{0}}$ and
thus $r=r_{0}$. Finally, suppose that $r=\tfrac{y_{0}}{x_{0}-1}$. If $%
r=r_{0} $ as well, then $\left( 2x_{0}+y_{0}-1\right) (x_{0}-1)-x_{0}\left(
2x_{0}+y_{0}-2\right) =0$, so that $1-x_{0}-y_{0}=0$, which cannot hold by (%
\ref{18}). Thus $r\neq r_{0}$ and we can follow the steps above in case 1 to
obtain (\ref{17}), which simplifies to

$-4y_{0}^{2}\left( 1-t_{1}\right) t_{1}^{3}q_{0}(t_{1})\left(
t_{1}y_{0}-t_{1}+x_{0}\right) \allowbreak y_{0}^{2}\left(
1-x_{0}-y_{0}\right) \tfrac{1-t_{1}}{\left( 1-x_{0}\right) ^{2}}$. The rest
follows as for $r=\tfrac{y_{0}}{x_{0}}$ or $r=\tfrac{y_{0}-1}{x_{0}}$. That
proves Theorem \ref{T2}(ii).
\end{proof}

\begin{remark}
There is another somewhat shorter way to prove Theorem \ref{OnePoint}(i) if
one is given the following result proven in \cite{H}.

\textbf{Theorem:} Let $Q$\ be a convex quadrilateral in the $xy$ plane. If $%
(x_{0},y_{0})\in \partial (Q)=$ boundary of $Q$, but $(x_{0},y_{0})$ is not
one of the vertices of $Q$, then there is exactly one ellipse inscribed in $%
Q $\ which passes through $(x_{0},y_{0})$(and is thus tangent to $Q$\ at one
of its sides). Now, given the hypotheses of Theorem \ref{OnePoint}(i), let $%
L $ denote the line thru $P=(x_{0},y_{0})$ with slope $r$. Then one can
prove that $L$, along with the three sides of $T$, form a convex
quadrilateral, $Q$. The theorem above then yields Theorem \ref{OnePoint}(i).
\end{remark}

\textbf{Examples: }(1) $x_{0}=\tfrac{1}{2}$, $y_{0}=\tfrac{1}{4}$, $r=2$.
Using (\ref{wt2}), we have $w=\allowbreak \tfrac{9}{58}$, $t=\allowbreak 
\tfrac{9}{59}$. By (\ref{2}), the corresponding ellipse has equation $%
281\,961x^{2}+272\,484y^{2}-239\,436xy-86\,022\allowbreak x-84\,564y+6561=0$.

$\allowbreak $(2) $x_{0}=\tfrac{1}{3}$, $y_{0}=\tfrac{1}{3}$, $r=\infty $.
Using (\ref{33}), we have $w=\allowbreak \allowbreak \tfrac{1}{2},$ $%
t=\allowbreak \tfrac{1}{5}$. By (\ref{2}), the corresponding ellipse has
equation $25x^{2}+4y^{2}+4xy-10x-4y+1=0$.

\section{Two Points--Boundary}

We now assume that $P_{1}$ and $P_{2}$ lie on the boundary of the triangle, $%
T$.

\begin{theorem}
\label{T3}Let $P_{1}=(x_{1},y_{1})$ and $P_{2}=(x_{2},y_{2})$ be distinct
points which lie on different sides of $\partial \left( T\right) $, and
assume that neither $P_{1}$ nor $P_{2}$ equals one of the vertices of $T$.
Then there is a unique ellipse inscribed in $T$ which is tangent to $%
\partial \left( T\right) $ at $P_{1}$ and at $P_{2}$.
\end{theorem}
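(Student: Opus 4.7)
My plan is to reduce to the unit triangle by affine invariance and then invoke Proposition \ref{P1}(ii), which says that every inscribed ellipse is parameterized by $(w,t)\in S=(0,1)\times(0,1)$ and is tangent to the three sides of $T$ precisely at
\[
T_1=(t,0),\qquad T_2=(0,w),\qquad T_3=\left(\dfrac{t(1-w)}{t+(1-2t)w},\dfrac{w(1-t)}{t+(1-2t)w}\right).
\]
So the theorem reduces to: given two boundary points $P_1,P_2$ on two different sides of $T$, there is exactly one $(w,t)\in S$ such that $\{P_1,P_2\}\subset\{T_1,T_2,T_3\}$.

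I would then split into three cases according to which two sides of $T$ contain $P_1$ and $P_2$, relabeling as needed. In the easiest case, $P_1=(a,0)$ with $0<a<1$ lies on the horizontal side and $P_2=(0,b)$ with $0<b<1$ lies on the vertical side. Matching to $T_1$ and $T_2$ forces $t=a$ and $w=b$, which automatically lies in $S$ since neither point is a vertex, and no other assignment to $T_3$ is possible since $T_3$ cannot be on the coordinate axes when $(w,t)\in S$. In the second case, $P_1=(a,0)$ on the horizontal side and $P_2=(c,1-c)$ with $0<c<1$ on the hypotenuse, we must set $t=a$ and then solve $\dfrac{t(1-w)}{t+(1-2t)w}=c$ for $w$; a short linear computation gives the unique value $w=\dfrac{a(1-c)}{a+(1-2a)c}$, and I would verify $0<w<1$ using $0<a,c<1$ (the denominator is a positive convex combination and the numerator is clearly smaller). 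The third case, with $P_1$ on the vertical side and $P_2$ on the hypotenuse, is symmetric under swapping $w\leftrightarrow t$, $x\leftrightarrow y$.

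The main obstacle, such as it is, is simply the bookkeeping: one must check in each case that the candidate $(w,t)$ actually lies in the open square $S$, and that no alternative assignment of $P_1,P_2$ to the tangent points $T_1,T_2,T_3$ yields a second solution. Uniqueness within a given case is immediate from the linearity of the resulting equations in $w,t$; uniqueness across cases follows because the three tangent points of an inscribed ellipse lie on three distinct sides of $T$, so the pairing of $P_1,P_2$ with tangent points is forced by which sides they occupy.

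Thus in each of the three cases we obtain exactly one $(w,t)\in S$, and by Proposition \ref{P1}(i) this yields exactly one inscribed ellipse tangent to $\partial T$ at $P_1$ and $P_2$, proving the theorem.
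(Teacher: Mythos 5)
Your proposal is correct and follows essentially the same route as the paper: reduce to the unit triangle by affine invariance, use Proposition \ref{P1}(ii) to identify the tangent points $T_{1},T_{2},T_{3}$ with the given boundary points according to which sides they lie on, solve the resulting (linear) equations for $(w,t)\in S$ in each of the three cases, and get uniqueness because the side each $P_{i}$ occupies forces its identification with a particular $T_{j}$ (the paper, like you, leans on the unproved but true fact that an inscribed ellipse touches each side at only one point).
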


\begin{proof}
By affine invariance, it suffices to prove the above theorem for the \textit{%
unit} triangle, $T$, which has sides $L_{1}=\overline{(0,0)\ (1,0)},L_{2}=%
\overline{(1,0)\ (0,1)}$, and $L_{3}=\overline{(0,0)\ (0,1)}$.

\textbf{Case 1:}\textit{\ }$P_{1}\in L_{1}$ and $P_{2}\in L_{3}$. Then $%
P_{1}=(t,0)$ and $P_{2}=(0,w)$ for some $0<t,w<1$. Let $E$ be the ellipse
with equation $F(x,t)=\allowbreak w^{2}x^{2}+t^{2}y^{2}-2wt\left(
2tw-2t+1-2w\right) xy-2tw^{2}x-2t^{2}wy+t^{2}w^{2}=0$. By Proposition \ref%
{P1}(ii), $E$ is tangent to $L_{1}$ at $P_{1}$ and tangent to $L_{3}$ at $%
P_{2}$. That gives existence. To prove uniqueness: Suppose that $\tilde{E}$
is also an ellipse inscribed in $T$ and tangent to $L_{1}$ at $P_{1}$ and
tangent to $L_{3}$ at $P_{2}$. Then by Proposition \ref{P1}(i), $\tilde{E}$
has equation $\tilde{w}^{2}x^{2}+\tilde{t}^{2}y^{2}-2\tilde{w}\tilde{t}%
\left( 2\tilde{t}\tilde{w}-2\tilde{t}+1-2\tilde{w}\right) xy-2\tilde{t}%
\tilde{w}^{2}x-2\tilde{t}^{2}\tilde{w}y+\tilde{t}^{2}\tilde{w}^{2}=0$ for
some $(\tilde{w},\tilde{t})\in (0,1)\times (0,1)$. By Proposition \ref{P1}%
(ii), $\tilde{E}$ is tangent to $L_{1}$ at $(\tilde{t},0)$ and tangent to $%
L_{3}$ at $(0,\tilde{w})$. Since an ellipse cannot be inscribed in $T$ and
tangent to a particular side of $T$ at more than one point(we do not prove
that here), $(\tilde{t},0)=(t,0)$ and $(0,\tilde{w})=(0,w)$, which implies
that $\tilde{t}=t$ and $\tilde{w}=w$ and thus $E=\tilde{E}$ since they have
the same equation.

\textbf{Case 2:}\textit{\ }$P_{1}\in L_{1}$ and $P_{2}\in L_{2}$. Then $%
P_{1}=(t,0)$ and $P_{2}=\left( \tfrac{t(1-w)}{t+(1-2t)w},\tfrac{w(1-t)}{%
t+(1-2t)w}\right) $ for some $0<t,w<1$. Let $E$ be the ellipse with equation 
$F(x,t)=0$. Then by Proposition \ref{P1}(ii), $E$ is tangent to $L_{1}$ at $%
P_{1}$ and tangent to $L_{2}$ at $P_{2}$. That gives existence. To prove
uniqueness: Suppose that $\tilde{E}$ is also an ellipse inscribed in $T$ and
tangent to $L_{1}$ at $P_{1}$ and tangent to $L_{2}$ at $P_{2}$. Arguing as
in case 1, we have $(\tilde{t},0)=(t,0)$ and $\left( \tfrac{\tilde{t}(1-%
\tilde{w})}{\tilde{t}+(1-2\tilde{t})\tilde{w}},\tfrac{\tilde{w}(1-\tilde{t})%
}{\tilde{t}+(1-2\tilde{t})\tilde{w}}\right) =\left( \tfrac{t(1-w)}{t+(1-2t)w}%
,\tfrac{w(1-t)}{t+(1-2t)w}\right) $. Thus $\tilde{t}=t$, $\tfrac{\tilde{t}(1-%
\tilde{w})}{\tilde{t}+(1-2\tilde{t})\tilde{w}}=\tfrac{t(1-w)}{t+(1-2t)w}$,
and $\tfrac{\tilde{w}(1-\tilde{t})}{\tilde{t}+(1-2\tilde{t})\tilde{w}}=%
\tfrac{w(1-t)}{t+(1-2t)w}$. Substituting $\tilde{t}=t$, we have $\tfrac{t(1-%
\tilde{w})}{t+(1-2t)\tilde{w}}=\tfrac{t(1-w)}{t+(1-2t)w}$, which implies
that $\tilde{w}=w$. Again, $E=\tilde{E}$ since they have the same equation.

\textbf{Case 3:}\textit{\ \ }$P_{1}\in L_{2}$ and $P_{2}\in L_{3}$. This is
similar to the other cases and we omit the details.
\end{proof}

\textbf{Example: }$x_{1}=\tfrac{1}{4}$, $y_{1}=\tfrac{3}{4}$, $x_{2}=\tfrac{2%
}{3}$, $y_{2}=0$. One could now solve (\ref{3}), but it is easier to just
use Proposition \ref{P1}(ii). Since we are given two of the three points of
tangency, we have $t=\tfrac{2}{3}$ and $\left( \tfrac{t(1-w)}{t+(1-2t)w},%
\tfrac{w(1-t)}{t+(1-2t)w}\right) =\left( \allowbreak 2\tfrac{1-w}{2-w},%
\tfrac{w}{2-w}\right) =\left( \tfrac{1}{4},\tfrac{3}{4}\right) $, which
implies that $\allowbreak 2\tfrac{1-w}{2-w}=\tfrac{1}{4}$ and $\tfrac{w}{2-w}%
=\tfrac{3}{4}$, each having the solution $w=\tfrac{6}{7}$. By (\ref{2}), the
corresponding ellipse has equation $324x^{2}+196y^{2}+456xy-432x-336y+144=0$.

\end{document}